\documentclass[11pt,reqno]{amsart}
\usepackage{amsmath,amsthm,amssymb,mathrsfs,stmaryrd,color}
\usepackage[all]{xy}
\usepackage{url}
\usepackage{geometry}
\geometry{lmargin=1in,rmargin=1in}

\usepackage[utf8]{inputenc}
\usepackage[T1]{fontenc}

\usepackage{relsize}
\usepackage[bbgreekl]{mathbbol}
\usepackage{amsfonts}

\DeclareSymbolFontAlphabet{\mathbb}{AMSb}
\DeclareSymbolFontAlphabet{\mathbbl}{bbold}

%


\usepackage{enumitem}
\usepackage[colorlinks=true,hyperindex, linkcolor=magenta, pagebackref=false, citecolor=cyan,pdfpagelabels]{hyperref}
\usepackage[capitalize]{cleveref}
\usepackage{mathtools}
\usepackage{tikz-cd}
\setlist[enumerate]{itemsep=2pt,parsep=2pt,before={\parskip=2pt}}

\newcommand{\cosimp}[3]{\xymatrix@1{#1 \ar@<.4ex>[r] \ar@<-.4ex>[r] & {\ }#2 \ar@<0.8ex>[r] \ar[r] \ar@<-.8ex>[r] & {\ } #3 \ar@<1.2ex>[r] \ar@<.4ex>[r] \ar@<-.4ex>[r] \ar@<-1.2ex>[r] & \cdots }}

\newcommand{\adjunction}[4]{\xymatrix@1{#1{\ } \ar@<0.3ex>[r]^{ {\scriptstyle #2}} & {\ } #3 \ar@<0.3ex>[l]^{ {\scriptstyle #4}}}}


\begin{document}

\newtheorem{theorem}{Theorem}[section]
\newtheorem*{theorem*}{Theorem}
\newtheorem*{definition*}{Definition}
\newtheorem{proposition}[theorem]{Proposition}
\newtheorem{lemma}[theorem]{Lemma}
\newtheorem{corollary}[theorem]{Corollary}

\theoremstyle{definition}
\newtheorem{definition}[theorem]{Definition}
\newtheorem{question}[theorem]{Question}
\newtheorem{remark}[theorem]{Remark}
\newtheorem{warning}[theorem]{Warning}
\newtheorem{example}[theorem]{Example}
\newtheorem{notation}[theorem]{Notation}
\newtheorem{convention}[theorem]{Convention}
\newtheorem{construction}[theorem]{Construction}
\newtheorem{claim}[theorem]{Claim}
\newtheorem{assumption}[theorem]{Assumption}


\def\todo#1{\textcolor{red}%
{\footnotesize\newline{\color{red}\fbox{\parbox{\textwidth}{\textbf{todo: } #1}}}\newline}}

\def\commentbox#1{\textcolor{red}%
{\footnotesize\newline{\color{red}\fbox{\parbox{\textwidth}{\textbf{comment: } #1}}}\newline}}

\newcommand{\qc}{q-\mathrm{crys}}

\newcommand{\Shv}{\mathrm{Shv}}
\newcommand{\Vect}{\mathrm{Vect}}
\newcommand{\Higgs}{\mathrm{Higgs}}
\newcommand{\et}{\mathrm{\acute{e}t}}
\newcommand{\eh}{\mathrm{\acute{e}h}}
\newcommand{\proet}{\mathrm{pro\acute{e}t}}
\newcommand{\fppf}{\mathrm{fppf}}
\newcommand{\crys}{\mathrm{crys}}
\renewcommand{\inf}{\mathrm{inf}}
\newcommand{\Hom}{\mathrm{Hom}}
\newcommand{\Sch}{\mathrm{Sch}}
\newcommand{\fSch}{\mathrm{fSch}}
\newcommand{\Rig}{\mathrm{Rig}}
\newcommand{\Spf}{\mathrm{Spf}}
\newcommand{\Spa}{\mathrm{Spa}}
\newcommand{\Spec}{\mathrm{Spec}}
\newcommand{\Bl}{\mathrm{Bl}}
\newcommand{\sn}{{\mathrm{sn}}}
\newcommand{\perf}{\mathrm{perf}}
\newcommand{\Perf}{\mathrm{Perf}}
\newcommand{\Pic}{\mathrm{Pic}}
\newcommand{\qsyn}{\mathrm{qsyn}}
\newcommand{\perfd}{\mathrm{perfd}}
\newcommand{\arc}{{\rm arc}}
\newcommand{\conj}{\mathrm{conj}}
\newcommand{\rad}{\mathrm{rad}}
\newcommand{\Id}{\mathrm{Id}}
\newcommand{\coker}{\mathrm{coker}}
\newcommand{\im}{\mathrm{im}}
\newcommand{\Cond}{\mathrm{Cond}}
\newcommand{\CHaus}{\mathrm{CHaus}}
\newcommand{\cg}{\mathrm{cg}}
\newcommand{\topo}{\mathrm{top}}
\newcommand{\Top}{\mathrm{Top}}
\newcommand{\Ab}{\mathrm{Ab}}
\newcommand{\Set}{\mathrm{Set}}
\newcommand{\Fl}{\mathrm{Fl}}

\newcommand{\Pro}{\mathrm{Pro}}
\newcommand{\Ind}{\mathrm{Ind}}
\newcommand{\Sm}{\mathrm{SmRig}}
\newcommand{\dR}{{\mathrm{dR}}}
\newcommand{\HTlog}{\operatorname{HTlog}}
\newcommand{\HT}{\operatorname{HT}}
\newcommand{\bdr}{{\mathbb{B}_{\mathrm{dR}}^{+,\bullet}}}
\newcommand{\an}{{\mathrm{an}}}
\newcommand{\uHom}{\underline{\mathrm{Hom}}}
\newcommand{\Sym}{\operatorname{Sym}}
\newcommand{\Lie}{\operatorname{Lie}}
\setcounter{tocdepth}{1}

\newcommand{\calO}{\mathcal{O}}

\title{A $p$-adic Simpson correspondence for singular rigid-analytic varieties}
\author{Hanlin Cai}
\address{Department of Mathematics, Columbia University, New York, NY 10027}
\email{hc3589@columbia.edu}
\author{Zeyu Liu}
\address{Department of Mathematics, UC Berkeley, Berkeley, CA 94720}
\email{zeliu@berkeley.edu}
\begin{abstract}
 Let $C$ be a complete, algebraically closed non-archimedean extension of $\mathbb{Q}_p$, and $X$ be a proper rigid-analytic variety over $C$. We show that  the category of pro-\'etale vector bundles on $X$ is equivalent to the category of Higgs bundles on the $\eh$-site of $X$, thereby generalizing the work of Faltings and Heuer to arbitrary proper rigid-analytic varieties. 
\end{abstract}

\maketitle

\tableofcontents

\section{Introduction}
Let $C$ be a complete, algebraically closed non-archimedean extension of $\mathbb{Q}_p$, and $X$ be a rigid-analytic variety over $C$, namely an adic space locally of topologically of finite type over $\Spa(C,\mathcal{O}_C)$. Faltings studied the $p$-adic Simpson correspondence for smooth projective curves in \cite{faltings2005p}, which states that certain "generalized representations" correspond to Higgs bundles. Later, Abbes, Gros and Tsuji generalized to higher-dimensional algebraic varieties in \cite{abbes2016p}. Finally Heuer generalized to arbitrary smooth proper rigid-analytic varieties over $C$ in \cite{heuer2023p}. The goal of this article is to generalize the $p$-adic Simpson correspondence to the case where $X$ is not necessarily smooth. The underlying idea goes back to Deligne, who studied Hodge theory using the $\eh$-topology and hypercovers.

\begin{theorem}[{\cref{thm. p-adic simpson for proper rigid-analytic variety}}]
\label{introthm.non-smooth p-adic simpson correspondence}
     Let $X$ be a proper rigid-analytic variety over $C$. Fix a $1$-truncated smooth proper $\eh$-hypercover of $X$ with a lift to $B^+_\dR/\xi^2$ and an exponential map $\exp:C\to 1+\mathfrak{m}$. Then there exists an equivalence of symmetric monoidal categories:
    \begin{eqnarray*}
\Vect(X_\proet) \xrightarrow[]{\simeq } \Higgs(X_\eh).
	\end{eqnarray*}

\end{theorem}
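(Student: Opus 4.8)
The plan is to deduce the singular case from Heuer's smooth equivalence \cite{heuer2023p} by cohomological descent along the chosen $h$-hypercover. Write $a\colon X_\bullet\to X$ for the fixed $1$-truncated smooth proper $h$-hypercover, so that $X_\bullet$ is determined by two smooth proper rigid spaces $X_0,X_1$ together with the face and degeneracy maps between them, and both the lift to $B^+_\dR/\xi^2$ and the exponential $\exp\colon C\to 1+\mathfrak m$ are data carried along $X_\bullet$. The strategy is to exhibit both sides as the totalization of the associated cosimplicial diagram on $X_\bullet$, apply the smooth correspondence termwise, and glue. Because the hypercover is $1$-truncated, every limit below reduces to a finite limit (an equalizer) over $X_1\rightrightarrows X_0$, which keeps the effectivity and naturality checks manageable.

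The first and hardest step is $h$-descent for the source: I would show that $Y\mapsto\Vect(Y_\proet)$ satisfies descent along $a$, i.e.\ that pullback induces an equivalence
\[
\Vect(X_\proet)\xrightarrow{\ \simeq\ }\lim_{[n]\in\Delta}\Vect((X_n)_\proet).
\]
Effectivity of descent data for pro-\'etale vector bundles along a proper surjective (possibly non-flat, non-finite) $h$-cover is not formal, and this is precisely where the foundational results of the paper enter: generic flatness together with flatness of strict transforms after a blowup, the quasi-finite refinement of fppf covers, and the rigid-analytic Zariski main theorem combine to reduce an arbitrary $h$-cover to a composite of admissible blowups and finite flat (or \'etale) covers, for which descent of vector bundles is standard. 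Properness of $X$ is used to control the local system underlying a pro-\'etale vector bundle and to ensure that $\Vect(X_\proet)$ is the correct global object (finite rank, no boundary phenomena).

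For the target the descent is essentially definitional: $\Higgs(X_h)$ is the category of Higgs bundles on the $h$-site of $X$, hence an $h$-sheaf, and the hypercover computes it, giving
\[
\Higgs(X_h)\xrightarrow{\ \simeq\ }\lim_{[n]\in\Delta}\Higgs((X_n)_h).
\]
Since each $X_n$ is smooth, one then compares $h$-Higgs bundles with ordinary ones, $\Higgs((X_n)_h)\simeq\Higgs(X_n)$; this follows from $h$-descent of (Higgs) vector bundles on smooth rigid spaces, which is governed by the resolution/alteration-type structure of $h$-covers available in the smooth case.

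Finally I would apply Heuer's equivalence termwise. For each smooth proper $X_n$ with its lift to $B^+_\dR/\xi^2$ and the exponential, \cite{heuer2023p} provides a symmetric monoidal equivalence $\Vect((X_n)_\proet)\simeq\Higgs(X_n)$, and the essential point is naturality of these equivalences along the face and degeneracy maps of $X_\bullet$. This is where compatibility of the chosen lifts along the hypercover is needed, since the equivalence depends on the lift; granting it, the termwise equivalences assemble into an equivalence of the two cosimplicial diagrams. Passing to the limit and combining with the two descent statements yields $\Vect(X_\proet)\simeq\Higgs(X_h)$. As pullback along $a$, Heuer's equivalence, and the formation of limits are each symmetric monoidal for the tensor products of vector bundles and Higgs bundles, the resulting equivalence is symmetric monoidal, completing the proof.
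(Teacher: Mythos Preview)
Your outline is correct and follows the same descent-along-the-hypercover strategy as the paper: both $\Vect((-)_\proet)$ and $\Higgs((-)_h)$ are $1$-truncated $h$-sheaves, so each is the equalizer over $X_1\rightrightarrows X_0$, and one then applies Heuer's smooth equivalence (with its functoriality in the lifted space) termwise and passes to the limit.

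However, you have the location of the hard work inverted. Descent for $\Vect((-)_\proet)$ along $h$-covers is \emph{not} where the structure theorem for the $h$-topology enters; it is cheap: pro-\'etale vector bundles on a rigid space coincide with $v$-vector bundles on its diamond (Kedlaya--Liu), and since the $v$-topology refines the $h$-topology one gets $h$-descent for free. Your remark about properness controlling an underlying local system is also off---properness is used only because Heuer's theorem (and the hypercover) requires it. The genuine input from the paper's foundational results (generic flatness, platification, quasi-finite refinement, Zariski's main theorem, and the resulting factorization of $h$-covers) is in the step you called ``follows from $h$-descent'': showing that for smooth $Y$ one has $\Vect(Y_h)\simeq\Vect(Y_\et)$ and $R\pi_{Y,*}\Omega^j_h=\Omega^j_Y$, whence $\Higgs(Y_h)\simeq\Higgs(Y_\et)$. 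That is where abstract blowup squares and the structure theorem for $h$-covers are actually needed.
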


The left-hand side is not new. The primitive comparison theorem of Scholze shows that the category of $C$-local systems is contained in the category of pro-\'etale vector bundles. And it is studied by Faltings under the name of "generalized representations". As for the right-hand side, it can be viewed as a Higgs field in which the sheaf of differentials is replaced by the first Deligne–Du Bois complex. Recall the $\eh$-topology studied previously by \cite{geisser2006arithmetic} and \cite{guo2019hodge} on the category $\Rig$ of rigid analytic varieties consists of \'etale coverings, blowup coverings and universal homeomorphisms. Hence by the resolution of singularities, the $\eh$-site on the category of rigid analytic varieties is locally smooth, i.e. it admits a basis of smooth rigid-analytic varieties. Therefore to define a (pre)sheaf on $\Rig_\eh$, it suffices to define it one a basis. Hence following \cite{guo2019hodge} we will denote the absolute differential sheaf $\Omega^i_\eh$ as the sheaf associated to the presheaf 
$$\Sm \to \Ab $$
sending $Y\in \Sm$ to $\Gamma (Y,\Omega^j_Y)$. In particular, after localizing at some rigid-analytic varieties $X$, one obtains an abelian sheaf on $X_\eh$. Since the $\eh$-topology is obviously finer than the \'etale topology on $X$, there is a natural projection $\rho:X_\eh \to X_\et$. It was proved by \cite{guo2019hodge} that if $X$ is the analytification of a projective variety over $C$ and there is an abstract isomorphism between $C$ and the complex numbers $\mathbb{C}$, then the derived pushforward $R\rho_*\Omega^i_\eh$ can be identified with the $i$-th Deligne–Du Bois complex under GAGA. This justifies the intuition behind the $\eh$-differential sheaf. And we can now define the Higgs bundles on $X_\eh$ as the \'etale case.
\begin{definition}
    A Higgs bundle $(E,\theta)$ on $X_\eh$ is a finite locally free $\mathcal{O}_{X_\eh}$-module $E$ equipped with a $\mathcal{O}_{X_\eh}$-linear morphism 
    $$\theta:E\to E\otimes\Omega^1_{X_\eh}(-1)$$
    such that $\theta\wedge\theta=0$. And we denote the groupoid of Higgs bundles on $X_\eh$ as $\Higgs(X_\eh)$.
\end{definition}

Note that when $X$ is smooth, the category of Higgs bundles on $X_\eh$ is equivalent to the category of Higgs bundles on $X_{\et}$ (\cref{prop. compare eh higgs with et higgs}). Thus, \cref{introthm.non-smooth p-adic simpson correspondence} generalizes the $p$-adic Simpson correspondence by \cite{heuer2023p} to the case of possibly singular proper rigid-analytic varieties.

\begin{remark}
We remark that even for algebraic varieties over the complex numbers, very little is known about Simpson’s correspondence in the singular case. For example, in \cite{greb2019nonabelian} such a correspondence is studied for projective varieties with klt singularities, where the sheaf of differentials is replaced by its reflexification. However, by \cite[Theorem 5.4]{HuberJorderDifferentialFormsHTopology}, the $h$-differential sheaf (which is the same as the $\eh$-differential sheaf), when restricted to a variety with klt singularities, coincides with the reflexive differentials. Thus, it seems plausible that applying our methods to algebraic varieties over the complex numbers would yield a generalization of the results of \cite{greb2019nonabelian}. Moreover, $h$-differential forms also appear in the study of (higher) $F$-injective singularities in positive characteristic in the work of \cite{kawakami2024higher}, so it is reasonable to expect a version of singular non-abelian Hodge theory in positive characteristic generalizing the work of \cite{ogus2007nonabelian}.
\end{remark}

\begin{remark}

In forthcoming work of Bhargav Bhatt and Mingjia Zhang, they construct a $\mathbb{G}_m$-gerbe $\mathcal{P}$ over the cotangent bundle of a smooth rigid-analytic variety $X$ over $C$, which they call the Simpson gerbe. This gerbe gives an equivalence between perfect complexes on the pro-\'etale site and perfect complexes of $\mathcal{P}$-twisted Higgs bundles. It is also reasonable to expect that an $\eh$-sheafified version of this gerbe would yield a corresponding generalization to arbitrary rigid-analytic varieties.
\end{remark}

As a $p$-adic analogue of the Hodge decomposition from complex geometry, Scholze \cite{scholze2013adic} proved that choosing a $B^+_\dR/\xi^2$ lift $\mathbb{X}$ of a smooth proper rigid-analytic variety $X$ over $C$ induces an isomorphism
\begin{equation*}
    H_{\et}^n(X, \mathbb{Q}_p) \otimes_{\mathbb{Q}_p} C=\bigoplus_{i+j=n} H^i(X, \Omega_X^j(-j)),
\end{equation*}
which was later generalized to the case that $X$ is not necessarily smooth by \cite[Theorem 1.1.3]{guo2019hodge}. From this perspective, Heuer's $p$-adic Simpson correspondence can be reinterpreted as a $p$-adic non-abelian Hodge correspondence, giving a non-abelian categorical generalization of Scholze's result above. Similarly, our \cref{introthm.non-smooth p-adic simpson correspondence} serves as a non-abelian generalization of \cite[Theorem 1.1.3]{guo2019hodge}, first justified by the following result:
\begin{theorem}[{\cref{thm. cohomology comparison}}]\label{introthm. cohomology comparison}
    Let $X$ be a proper rigid-analytic variety over $C$ of dimension $n$. Take a $3n+2$-truncated smooth proper $\eh$-hypercover of $X$ with a lift to $B^+_\dR/\xi^2$. Given a pro-\'etale vector bundle $\mathcal{E}$ on $X$, it corresponds to a Higgs bundle $(E,\theta)$ on $X_\eh$ under the equivalence in \cref{introthm.non-smooth p-adic simpson correspondence}, then there is a natural isomorphism
    \begin{equation*}
        R\Gamma_\proet(X, \mathcal{E})\xrightarrow{\simeq} R\Gamma_\eh(X, (E,\theta)).
    \end{equation*}
\end{theorem}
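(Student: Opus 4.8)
The plan is to prove the cohomology comparison by reducing it to a computation on the $h$-hypercover, where both sides can be described in terms of the smooth pieces $X_\bullet$, and then invoking Heuer's smooth $p$-adic Simpson correspondence termwise together with descent.

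=== BEGIN PROOF PROPOSAL ===

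The plan is to prove the cohomology comparison by descent along the chosen $h$-hypercover $X_\bullet \to X$, reducing everything to the smooth pieces $X_n$ where Heuer's correspondence applies, and then carefully tracking the truncation bound so that a $3n+2$-truncated hypercover suffices to compute cohomology in degrees up to $n$.

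\textbf{Step 1: Descent for both sides.}
First I would express each side as the totalization of a cosimplicial object indexed by the hypercover. For the pro-\'etale side, the pullback functors along the $h$-cover give a cosimplicial object $R\Gamma_\proet(X_\bullet, \mathcal{E}|_{X_\bullet})$, and I would argue that $R\Gamma_\proet(X,\mathcal{E})$ is the totalization of this cosimplicial spectrum. This is a descent statement: it requires that pro-\'etale cohomology of vector bundles satisfies $h$-hyperdescent, which should follow from the foundational results on the $h$-topology established earlier in the paper (in particular that smooth proper $h$-hypercovers are of universal cohomological descent for the relevant coefficients). For the Higgs side, $R\Gamma_h(X,(E,\theta))$ is computed on the $h$-site by definition, so its value is \emph{a priori} the totalization of $R\Gamma_h(X_\bullet, (E,\theta)|_{X_\bullet})$, and on each smooth piece the $h$-cohomology of the Higgs bundle agrees with the \'etale Higgs cohomology via the comparison \cref{prop. compare eh higgs with et higgs}.

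\textbf{Step 2: Termwise comparison via Heuer.}
On each smooth proper rigid space $X_n$ equipped with its lift $\mathbb{X}_n$ to $B^+_\dR/\xi^2$, the functor $S_{\mathbb{X}_\bullet}$ restricts to the small $p$-adic Simpson correspondence of \cite{heuer2023p}, which provides a natural isomorphism
\begin{equation*}
R\Gamma_\proet(X_n, \mathcal{E}|_{X_n}) \simeq R\Gamma_\et(X_n, (E,\theta)|_{X_n})
\end{equation*}
identifying pro-\'etale cohomology of the vector bundle with the Higgs (Dolbeault) cohomology of the associated Higgs bundle. The key point is that these isomorphisms are \emph{natural} in $X_n$, hence compatible with the cosimplicial structure maps of the hypercover; I would verify this naturality by checking that Heuer's construction commutes with pullback along the smooth morphisms in the hypercover, which is ultimately a consequence of the functoriality of the Hitchin--small correspondence with respect to the $B^+_\dR/\xi^2$-lifts. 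Combining this with Step 1 and passing to totalizations yields the desired equivalence.

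\textbf{Step 3: The truncation bound.}
The reason a finite ($3n+2$-truncated) hypercover suffices, rather than a full simplicial object, is a connectivity estimate. I would use the fact that the cofiber of the comparison map, were it computed with an infinite hypercover, is controlled degreewise, and that the contribution of the $k$-th level of the hypercover to cohomology in a fixed degree $n$ vanishes once $k$ exceeds a linear bound in $n$. Concretely, each term $R\Gamma(X_k,-)$ contributes to $H^n$ of the totalization only through a spectral sequence whose $E_1^{k,q}$ lives in total degree $k+q$, and since the Higgs complex on a smooth piece has amplitude bounded by the dimension while the cosimplicial degree adds $k$, the terms beyond level $3n+2$ cannot affect degrees $\leq n$. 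The factor of $3$ presumably accounts for the three-step filtration implicit in the Higgs/Dolbeault complex (the two-term de Rham-type complex plus the $\xi$-adic lift), so I would make this bookkeeping precise by comparing the naive and the Hodge-type filtrations.

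\textbf{Main obstacle.}
The hard part will be Step 1, specifically establishing $h$-hyperdescent for pro-\'etale cohomology of vector bundles on the left-hand side: pro-\'etale vector bundles do not obviously satisfy $h$-descent since the $h$-topology is much coarser and mixes in blowups and alterations, so one cannot naively sheafify. I expect the resolution to route through the equivalence of \cref{introthm.non-smooth p-adic simpson correspondence} itself: rather than proving descent for $\mathcal{E}$ directly, I would transport the problem to the Higgs side (where $h$-descent holds essentially by construction of $\Higgs(X_h)$) and deduce the pro-\'etale descent as a \emph{consequence} of the established categorical equivalence, being careful to avoid circularity by ensuring that \cref{introthm.non-smooth p-adic simpson correspondence} is proved independently of this cohomology statement. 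Managing this dependency, together with making the connectivity bound in Step 3 fully rigorous, will be the most delicate part of the argument.

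=== END PROOF PROPOSAL ===
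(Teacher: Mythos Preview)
Your overall strategy---descend along the hypercover, compare termwise via Heuer, and argue that a finite truncation suffices---matches the paper's. However, two of your key points are off.

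First, Step 3 misidentifies both the target degree range and the source of the factor $3$. The $(3n+2)$-truncation does not serve to compute cohomology in degrees $\leq n$; it serves to compute cohomology in degrees $\leq 3n$, since the cone of the comparison with the truncated totalization lives in degrees $\geq 3n+1$. The factor $3$ has nothing to do with a ``three-step filtration'' or the $\xi$-adic lift: it comes from the rough a priori bound that Dolbeault cohomology $R\Gamma_h(X,(E,\theta))$ sits in degrees $[0,3n]$ (the Higgs complex has length $n$, each $\Omega^j_h$ contributes cohomology in $[0,n]$ after pushforward, and coherent cohomology on a rigid space of dimension $n$ adds another $n$). The paper then uses two independent cohomological dimension bounds---$R\Gamma_\proet(X,\mathcal{E})$ in $[0,2n]$ and $R\Gamma_h(X,(E,\theta))$ in $[0,3n]$---so that agreement after $\tau^{\leq 3n}$ is agreement on the nose. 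You never state or invoke these bounds, and without them your argument does not close: knowing the two sides agree in low degrees is useless unless you know both are already concentrated there.

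Second, your ``main obstacle'' is not an obstacle. Pro-\'etale vector bundles satisfy $v$-descent (this is Kedlaya--Liu), and since the $h$-topology is \emph{coarser} than the $v$-topology, $h$-hyperdescent for $R\Gamma_\proet(-,\mathcal{E})$ is immediate. There is no need to route through the categorical equivalence, and no circularity to manage. The genuinely delicate ingredient you are missing is precisely the cohomological dimension bound $R\Gamma_\proet(X,\mathcal{E})\in D^{[0,2n]}$ for possibly singular $X$, which the paper establishes separately (via almost coherence of $R\Psi_*\mathcal{E}^+$ in the smooth case and then induction on dimension using abstract blowup squares).
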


Like in the algebraic case, the key idea to prove the results above is to use local smoothness of the $\eh$-topology. Namely, given a proper rigid-analytic variety $X$ over $C$, one can construct a truncated smooth $\eh$-hypercover. And on smooth rigid-analytic varieties, one then argues that the category of vector bundles (resp. Higgs bundles) on $X_\et$ is equivalent to the category of vector bundles (resp. Higgs bundles) on $X_\eh$ as in \cref{thm.descent of vector bundles} and \cref{prop. compare eh higgs with et higgs}. Finally using the functoriality of the $p$-adic Simpson correspondence of Heuer, one can finally produce a functor between pro-\'etale vector bundles on $X$ and Higgs on bundles on $X_\eh$. And using descent results again we conclude the desired categorical equivalence.
\begin{remark}
    We warn the reader here that there are some subtleties that arise when dealing with general vector bundles on $X$ rather than the structure sheaf, even when $X$ is smooth. This is caused by the fact that $\Omega^1_\eh$ on $X$ is not a vector bundle even in the smooth case. Note that the rank may vary when restricted to different objects in $X_\eh$. See Warning \cref{warning. big differential and relative differential} for more details. Hence as opposed to the \'etale Dolbeault complex which is bounded, the $\eh$-Dolbeault complex 
    $$ E\to E\otimes \Omega^1_{X_\eh}(-1)\to E\otimes \Omega^2_{X_\eh}(-2)\to \cdots$$
    of some $\eh$-Higgs bundle $(E,\theta)$ is not necessarily bounded. Therefore, to compute the Dolbeault cohomology, we must first compare it with the pro-\'etale cohomology of the corresponding pro-\'etale vector bundle and rely on the cohomological bounds of the latter, c.f. \cref{prop.cohomological dimension bounds of proet VB}.
\end{remark}

\subsection{Organization of the article}
In Section 2, we recall the $\eh$-topology and establish the crucial descent properties for both vector bundles and Higgs bundles on smooth rigid-analytic varieties. In Section 3, we develop the necessary machinery for simplicial rigid-analytic varieties and establish the $p$-adic Simpson correspondence in the smooth proper simplicial setting. Finally, in Section 4, we synthesize these ingredients by utilizing smooth proper $\eh$-hypercovers to prove our main result---the extension of the $p$-adic Simpson correspondence to arbitrary proper rigid-analytic varieties---and conclude by establishing the corresponding cohomological comparisons and dimension bounds.

\begin{convention}
    Throughout this article, $C$ is a complete algebraically closed non-archimedean extension over $\mathbb{Q}_p$. Denote $\Spa(C)$ as the adic spectrum of $(C,\mathcal{O}_C)$. We let $\Rig$ denote the rigid-analytic varieties over $\Spa(C)$, namely adic spaces locally of finite type over $\Spa(C)$. We denote smooth rigid-analytic varieties as $\Sm$. Since rigid-analytic varieties are locally of topological finite type over $\Spa(C)$, any integrally closed open subring $A^+$ of $A$ that is topologically of finite type over $\mathcal{O}_C$ is $A^\circ$. Hence if not specified, we write $\Spa(A)$ to denote $\Spa(A,A^\circ)$. In the following we will write tft as topological of finite type. Moreover every morphism between tft is tft by \cite[Lemma 3.5 (iv)]{huber1994generalization}. If $X=\Spa(A)$ is an affinoid rigid-analytic variety, then we will denote $\Spec(A)$ as $X_a$. Moreover, if $Y$ is a scheme locally of finite type over $\Spec(A)$, we will follow \cite[Proposition 3.8]{huber1994generalization} to denote the relative analytification of $Y$ over $X$ as $Y^{\mathrm{an}/X}$.

    Vector bundles on a ringed topos $(\mathcal{T},\mathcal{O})$ always mean finite locally free $\mathcal{O}$-modules. On pro-\'etale site we only consider the completed structure sheaf $\hat{\mathcal{O}}_X$. For any rigid-analytic variety $X$, we will follow \cite{scholze2017etale} to denote $X^\diamond$ as its associated diamond. For any coherent sheaf $F$ on some rigid-analytic variety $X$, its cohomology on some rigid-analytic variety $f:Y\to X$ over $X$ is given by $f^*F$. Projections from the big $\tau$-site of $X$ to the small $\tau$-site of $X$ are denoted as $\Id_X$ when there is no confusion on the topology. 
    
    We will denote the 2-category (1-)groupoids as $\mathrm{Groupoids}$. Limits of groupoids are interpreted in $\mathrm{Groupoids}$.
\end{convention}
\subsection{Acknowledgement}
We would like to thank Brian Conrad, Johan de Jong, Ben Heuer, Sean Howe and Bogdan Zavyalov for helpful discussions and valuable feedback on a preliminary version. Special thanks to Bogdan Zavyalov for helpful discussions and Brian Conrad for suggesting useful references. Both of the authors are partially funded by the Simons Collaboration on Perfection during the preparation of the project. The second-named author was partially supported by NSF DMS-2053473 under Kiran Kedlaya.

\newpage

\section{$\eh$-descent of vector bundles and Higgs bundles}
\subsection{Recollection of $\eh$-topology}
In this section, we briefly recall the $\eh$-topology previously studied for schemes by \cite{geisser2006arithmetic} and for rigid-analytic varieties by \cite{guo2019hodge}. Recall that for $X$ a rigid-analytic variety and $\bigoplus_{d\ge 0}A_d$ be a graded coherent $\mathcal{O}_Y$-algebra, we denote 
    $$\underline{\mathrm{Proj}}^\mathrm{an}_Y(\bigoplus_{d\ge 0}A_d)\to X$$
    to be the relative projective space of $\bigoplus_{d\ge 0}A_d$ over $X$. If $Y$ is affinoid, then it is given by 
    $$\underline{\mathrm{Proj}}^\mathrm{an}_Y(\bigoplus_{d\ge 0}A_d)=(\underline{\mathrm{Proj}}_{\Spec(\mathcal{O}_Y(Y))}(\bigoplus_{d\ge 0}A_d(Y)))^{\mathrm{an}/Y}.$$
    We will write $\mathrm{Bl}_Z(X)$ as the blowup of $X$ along $Z$ where $Z$ is a Zariski closed subset of $X$ defined by a coherent ideal sheaf $\mathcal{I}$, namely
    $$ p_Z:\mathrm{Bl}_Z(X)\coloneqq\underline{\mathrm{Proj}}^\mathrm{an}_X(\bigoplus_{n\ge 0}\mathcal{I}^n) \to X.$$
And we will call the blowup $\Bl_Z(X)$ a smooth blowup if the center $Z$ is smooth.
\begin{definition}
    The $\eh$-topology on the category $\Rig$ of rigid-analytic varieties over $C$ is the Grothendieck topology associated to the pretopology generated by the following coverings:
    \begin{enumerate}
        \item \'Etale coverings.
        \item Blowup coverings, namely the coverings $\{Z\to X,\ \Bl_Z (X)\to X \}$ associated to the blowup square
        \[
        \begin{tikzcd}
            E \ar[d]\ar[r] & \Bl_Z (X) \ar[d]\\
            Z \ar[r] & X.
        \end{tikzcd}
        \]
        where $Z$ is the blowup center and $E$ is the exceptional divisor.
        \item Universal homeomorphisms.
    \end{enumerate}
    The big $\eh$-site on $\Rig$ is denoted as $\Rig_\eh$ and its localization at some rigid-analytic variety $X$ is denoted as $X_\eh\coloneq \Rig_\eh/X$.
\end{definition}

We will use the structure theorem of $\eh$-topology repeatedly. 
\begin{proposition}[{\cite[Theorem 2.4.11, Corollary 2.4.12]{guo2019hodge}}]
\label{prop. guo decompose}
    Let $X$ be a quasicompact rigid-analytic variety. Then any $\eh$-covering can be refined by 
    $$Y\to X_\mathrm{red}\to X$$
    where $Y$ is smooth and $Y\to X_\mathrm{red}$ a composition finitely many \'etale coverings and coverings associated to smooth blowups. In particular, if $X$ is smooth then any $\eh$-covering can be refined by a composition of finitely many \'etale coverings and coverings associated to smooth blowups.
\end{proposition}

The following corollary states that $\eh$-sheaf has blowup excision property.
\begin{corollary}
\label{prop.h-sheaf=fppf sheaf + abstract blowups}

    For any sheaf $F$ of spaces on $\Rig_\eh$, it satisfies blowup excisions, namely for an affinoid rigid-analytic variety $X$ and any blowup $p_Z:\Bl_Z(X)\to X$ along a nowhere dense Zariski closed subset $Z\hookrightarrow X$ with exceptional divisor $E\hookrightarrow \Bl_Z(X)$, the following diagram
\[
\begin{tikzcd}
    F(X)\arrow[d]\arrow[r]& F(Z)\arrow[d]\\ F(\Bl_Z(X))\arrow[r]& F(E)
\end{tikzcd}
\]
is a pullback square.

\end{corollary}
\begin{proof}
Let ${\underline{X}}$ be the $\eh$-sheafification of the representable presheaf (of sets) of $X$. Since ${\underline{X}}$ is the final object in the category of sheaves of spaces on $X_\eh$, we have $\mathrm{Map}({\underline{X}},F)=F(X)$. Therefore, it suffices to show that ${\underline{X}}$ is the homotopy pushout of ${\underline{Z}}$ and ${\underline{\Bl_Z(X)}}$ over ${\underline{E}}$ in the category of sheaf of spaces on $\Rig_\eh$. However, for any $Y\in \Rig$, we know that ${\underline{E}}(Y)\to {\underline{\Bl_Z(X)}}(Y)$ is injective and hence cofibrant since $E \hookrightarrow\Bl_Z(X)$ is a closed immersion. Hence the homotopy pushout of ${\underline{Z}}(Y)$ and ${\underline{\Bl_Z(X)}}(Y)$ over ${\underline{E}}(Y)$ coincides with its set-theoretic pushout. Thus the sheaf of the homotopy pushout of ${\underline{Z}}$ and ${\underline{\Bl_Z(X)}}$ over ${\underline{E}}$ is discrete and therefore coincides with its pushout in the category of sheaves of sets.

Finally, we verify that $\underline{X}$ is the pushout of $\underline{Z}$ and ${\underline{\Bl_Z(X)}}$ over ${\underline{E}}$ as sheaf of sets. Take $F$ be a sheaf of sets on $\Rig_\eh$, then the sheaf condition shows that $F(X)$ is the equalizer of 
$$F(Z)\times F(\Bl_Z(X)) \substack{\longrightarrow \\[-1em] \longrightarrow} F(Z)\times F(E)\times F(E)\times F(\Bl_Z(X)\times_X \Bl_Z(X)).$$
Specifically speaking, sections in $F(X)$ are equivalent of sections $(s_Z,s_B)\in F(Z)\times F(\Bl_Z(X))$ such that $s_Z|_E=s_B|_E$ and $\mathrm{pr}^*_1(s_B)=\mathrm{pr}^*_2(s_B)$ where $\mathrm{pr}_i$ are the projections from $\Bl_Z(X)\times_X \Bl_Z(X) \to \Bl_Z(X)$. Hence we need to verify that the second condition is automatically checked. We first claim that 
$$\Bl_Z(X)\coprod E\times_Z E \xrightarrow{\Delta\coprod i} \Bl_Z(X)\times_X \Bl_Z(X)$$
is an $\eh$-covering where $\Delta$ is diagonal and $i$ is the closed immersion. First observe that since the restriction of $\Bl_Z(X)$ over $X-Z$ is isomorphic go $X-Z$, the restriction of the fibre product $\Bl_Z(X)\times_X \Bl_Z(X)$ over $X-Z$ is again $X-Z$ which is covered by the diagonal $\Delta(\Bl_Z(X))$. Thus $\Delta\coprod i$ is surjective. Since $\Delta(E)$ is the intersection of the two Zariski closed subspace $\Delta(\Bl_Z(X)$ and $E\times_Z E$, the blowup of $\Delta(E)$ on $\Bl_Z(X)\times_X \Bl_Z(X)$ which can be identified with $\Bl_Z(X)\coprod \Bl_{\Delta(E)}(E\times_Z E)$. Therefore, the covering associated to $\Delta\coprod i$ can be refined by a blowup covering, i.e. it is an $\eh$-covering as claimed. Hence to show that two sections $\mathrm{pr}^*_1(s_B) $ and $\mathrm{pr}^*_2(s_B)$ are the same in $F(\Bl_Z(X)\times_X \Bl_Z(X))$, it suffices to show that they are the same after pulling back to $$F(\Bl_Z(X)\coprod E\times_Z E)=F(\Bl_Z(X))\times F( E\times_Z E).$$
However, it is clear that $\Delta^*\mathrm{pr}^*_1(s_B)=\Delta^*\mathrm{pr}^*_2(s_B)$ as $\Delta$ is the diagonal. To check that $i^*\mathrm{pr}^*_1(s_B) =i^*\mathrm{pr}^*_2(s_B)$, it suffices to check that after pulling back $s_B|_E$ to $E\times_Z E$ via two projections give the same section. However, this follows from the fact that $s_B|_E=s_Z|_E$.

\end{proof}

\begin{remark}
\label{rem.eh vs v topology}
    We remark that the $\eh$-topology is coarser than the $v$-topology of \cite{scholze2017etale}. If $p$ is an $\eh$-cover then $p^\diamond: Y^\diamond\to X^\diamond$ is a $v$-cover of diamonds, i.e., a surjective map of $v$-stacks. Indeed, by \cite[Lemma 12.11.]{scholze2017etale}, it suffices to show that $p^\diamond$ is quasicompact. For any quasicompact $v$-stacks $Z$ with a morphism $f:Z\to X^\diamond$, we need to prove that $Z\times_{X^\diamond}Y^\diamond$ is quasicompact. By covering $X^\diamond$ with affinoid open sets, we may assume that $Z$ maps into an affinoid open set $U^\diamond$ of $X^\diamond$. Then $Z\times_{X^\diamond}Y^\diamond\simeq Z\times_{U^\diamond}(p^{-1}(U))^\diamond$ is quasicompact since $U^\diamond$ is quasiseparated by \cite[Proposition 15.4.]{scholze2017etale}.
\end{remark}

\subsection{Descent of vector bundles}
To justify our generalization to the singular case, we verify that Higgs bundles in the smooth case satisfy $\eh$-descent. 
We first verify that vector bundles satisfy $\eh$-descent, i.e. the groupoid of étale vector bundles $\Vect(X_{\et})$ is an $\eh$-stack. To this end, we follow the strategy in \cite{bhatt2017projectivity}. We will denote the natural projection as $\rho_X:X_\eh\to (\Rig/X)_\et$.



\begin{theorem}
\label{thm.descent of vector bundles}
    For any smooth rigid-analytic variety $X$, there is an equivalence of groupoids
    $$\rho_{X}^*:\Vect((\Rig/X)_\et)\xrightarrow{\simeq} \Vect(X_\eh) .$$
    In fact, for any \'etale vector bundle $E$, we have $R\rho_{X,*}\rho_{X}^* F= F$. 
\end{theorem}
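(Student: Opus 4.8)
The plan is to prove the stronger formula $R\rho_{X,*}\rho_X^*F = F$ first, since it yields full faithfulness directly, and then to establish essential surjectivity; both will be extracted from a single descent statement for $\Vect$ along the building blocks of an $h$-cover. As all assertions are local on $X$, I would first reduce to $X$ quasicompact. The engine is the structure theorem (\cref{thm. h-topology factorization theorem}) together with the (stack version of the) descent formalism of \cref{lem. LZ}: any $h$-cover of a smooth quasicompact $X$ is refined by a composition of an $\eh$-cover $\pi:X'\to X$ by a smooth space, an étale cover $q:Z\to X'$, an analytic open cover $\{Z_i\to Z\}$, and finite surjective maps $h_i:V_i\to Z_i$. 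By \cref{lem. LZ} it suffices to check that $\Vect$ is of (universal) descent along each such class separately. Descent along analytic open, étale, and general fppf covers is classical fppf descent for vector bundles (condition (1) of \cref{prop.h-sheaf=fppf sheaf + abstract blowups}, via \cref{cor. decompose fppf}). For the $\eh$-covers I would invoke the refinement of \cite[Corollary 2.4.12]{guo2019hodge} used in the proof of \cref{prop.h-sheaf=fppf sheaf + abstract blowups}, which decomposes an $\eh$-cover into a reduction map, étale covers, and smooth blowups; since our base $X'$ is smooth, every space arising downstream (étale over, or open in, or a smooth blowup of a smooth space) is again smooth, so the reduction maps are isomorphisms and only étale and smooth-blowup descent remain. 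Finally, after refining each $V_i$ by a smooth $\eh$-cover (\cite[Corollary 2.4.8]{guo2019hodge}) we may assume $V_i$ and $Z_i$ are smooth, and then \cref{cor. refine finte surjective} refines $h_i$ by smooth blowups and fppf covers, so finite surjective descent reduces --- again by \cref{lem. LZ} --- to fppf and smooth-blowup descent.

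This isolates the one genuinely new input: descent of vector bundles along a blowup $f:X=\Bl_Z(Y)\to Y$ of a smooth $Y$ along a smooth nowhere-dense center $Z$, with exceptional divisor $E$. Concretely I must prove (a) the cohomological vanishing $Rf_*\calO_X=\calO_Y$, together with the projective-bundle identity for $E\to Z$, and (b) the resulting pullback square of groupoids
\[
\Vect(Y)\;\xrightarrow{\ \simeq\ }\;\Vect(X)\times_{\Vect(E)}\Vect(Z),
\]
which is exactly condition (2) of \cref{prop.h-sheaf=fppf sheaf + abstract blowups}. For (a), since all blowups here are analytifications of algebraic ones, I would deduce it from the classical fact that a blowup of a smooth scheme along a smooth center has $Rf_*\calO=\calO$, transporting the coherent-cohomology statement across relative analytification (\cref{cor.relative analytification is flat}) over affinoids and using that $E\to Z$ is a projective-space bundle together with the projection formula. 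Statement (b) then follows from (a) by a formal gluing along the exceptional divisor: a bundle on $Y$ is equivalent to a bundle on $X$, a bundle on $Z$, and an identification of their pullbacks to $E$, the vanishing in (a) ensuring that this gluing is representable by a bundle on $Y$ with no higher obstruction.

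Granting descent, the two conclusions follow formally. Full faithfulness of $\rho_X^*$ is the internal-$\Hom$ case of $R\rho_{X,*}\rho_X^*F=F$: for $E,E'\in\Vect((\Rig/X)_\fppf)$ one computes $\Hom_{X_h}(\rho_X^*E,\rho_X^*E')=H^0\big(X_h,\rho_X^*(E^\vee\otimes E')\big)=H^0(X,E^\vee\otimes E')=\Hom_X(E,E')$, applying the formula to the vector bundle $E^\vee\otimes E'$. Essential surjectivity is the effectivity of $h$-descent data for vector bundles, which the building-block descent statements above supply. Finally, the formula $R\rho_{X,*}\rho_X^*F=F$ itself is local and, by the projection formula $R\rho_{X,*}\rho_X^*F=F\otimes R\rho_{X,*}\rho_X^*\calO_X$, reduces to the case $F=\calO_X$, i.e.\ to $H^0_h(X,\calO)=\calO(X)$ and the vanishing of higher $h$-cohomology of $\calO$; this is precisely what the blowup computation (a) delivers once propagated through \cref{thm. h-topology factorization theorem} and \cref{lem. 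LZ}. I expect the main obstacle to be exactly (a)--(b): one must control the passage between an algebraic blowup and its analytification (the blowups are finitely presented algebraically, but the affinoid algebras are not finitely presented over $C$), and verify that the coherent-cohomology comparison, projective-bundle formula, and formal gluing all survive in the adic category with its higher-rank points present.
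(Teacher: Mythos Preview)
Your overall strategy matches the paper's: both reduce $h$-descent of vector bundles on smooth spaces to descent along smooth blowups via \cref{thm. h-topology factorization theorem} and \cref{cor. refine finte surjective}, and both identify the blowup square of \cref{lem.vector bundles statisfies abstract blowup squares} as the essential new input. Two points in your outline, however, are genuine gaps relative to what the paper does.

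First, your appeal to \cref{lem. LZ} alone does not close the argument. That lemma requires \emph{universal} $F$-descent along each building block, and for a smooth blowup $p_Z:\Bl_Z(Y)\to Y$ this forces you to control the \v{C}ech nerve $\Bl_Z(Y)^{\bullet/Y}$, whose terms are not smooth; you therefore cannot simply restrict to the smooth category and iterate. The paper does not try to push this through via \cref{lem. LZ}. Instead, for the formula $R\rho_{X,*}\rho_X^*F=F$ it runs a minimal-dimension contradiction: take a nonzero class $x\in H^i_\fppf(Y,G)$ with $G=\mathrm{cofib}(F\to R\rho_{X,*}\rho_X^*F)$ and $Y$ smooth of minimal dimension; refine the killing $h$-cover to a smooth blowup; then the fiber sequence of \cref{lem.vector bundles statisfies abstract blowup squares}(1), together with the fact that $\tau_Z$ and $\tau_E$ are isomorphisms by minimality of $\dim Y$, shows $p_Z^*x\neq 0$, a contradiction. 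Essential surjectivity is handled by an analogous explicit induction on dimension. Your ``propagation through \cref{lem. LZ}'' has to be replaced by this dimension induction.

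Second, you claim that the pullback square (b) follows formally from the vanishing (a). In the paper's \cref{lem.vector bundles statisfies abstract blowup squares}(2), full faithfulness does follow from (1), but essential surjectivity needs real work: one reduces by faithfully flat descent to $A$ being $I$-adically complete, identifies $\Vect(Y)\simeq\Vect(Z)$ by deformation theory, and then lifts the given isomorphism over $E$ to one over $\Bl_Z(Y)$ using Grothendieck's existence theorem, with the obstruction classes killed via the vanishing $H^j(E,\mathcal{I}^n/\mathcal{I}^{n+1})=0$ for $j\ge 1$. This is a genuine deformation-theoretic step, not a formal gluing consequence of $Rf_*\mathcal{O}_X=\mathcal{O}_Y$.
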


\begin{corollary}
    The presheaf 
    $$\Vect: \Sm_\eh \to \mathrm{Groupoids}$$
    sending $Y$ to $\Vect(Y)$ is an $\eh$-sheaf.
\end{corollary}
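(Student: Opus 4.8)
The plan is to read this corollary off of \cref{thm.descent of vector bundles} by transporting the \emph{tautological} $h$-descent enjoyed by vector bundles on the $h$-site to the naive presheaf $\Vect$. The basic observation is that the assignment $X\mapsto \Vect(X_h)$, where $\Vect(X_h)$ denotes finite locally free modules over the structure sheaf on the localized $h$-site $X_h$, is automatically an $h$-sheaf of groupoids: sheaves of modules over the structure sheaf of any ringed site satisfy descent for the ambient topology, so there is nothing to prove for $X\mapsto\Vect(X_h)$. Moreover, by \cref{lem.h-topology has enough points} the $h$-topos has enough points and is therefore hypercomplete, so this descent may be tested against arbitrary $h$-hypercovers. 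The whole content to be imported from the theorem is thus that this tautological sheaf agrees, on smooth objects, with the presheaf $Y\mapsto\Vect(Y)$ of genuine vector bundles.

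Concretely, for smooth $X$ I would first record the natural chain of equivalences
$$\Vect(X)\xrightarrow{\ \simeq\ }\Vect((\Rig/X)_\fppf)\xrightarrow[\ \simeq\ ]{\rho_X^*}\Vect(X_h),$$
in which the first arrow is fppf descent of vector bundles (finite locally free $\mathcal{O}_X$-modules satisfy flat descent) and the second is the equivalence of \cref{thm.descent of vector bundles}. Both arrows are natural in $X$ for morphisms of smooth rigid spaces, since $\rho_X^*$ is compatible with restriction along any $Y\to X$; hence $\Vect|_{\Sm}$ is naturally isomorphic, as a presheaf on $\Sm$, to the restriction to $\Sm$ of the $h$-sheaf $X\mapsto\Vect(X_h)$.

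It then remains to verify the $h$-descent condition for $\Vect|_{\Sm}$, and here hypercompleteness lets me test it against $h$-hypercovers. The main obstacle is that the iterated fibre products of an $h$-cover of a smooth space need not be smooth, so the theorem cannot be applied termwise to a Čech nerve. I would circumvent this exactly as in the proof of \cref{thm. h-topology factorization theorem}: every rigid space admits a proper $\eh$-cover by a smooth rigid space (\cite[Corollary 2.4.8]{guo2019hodge}), and iterating this construction at each simplicial level (covering the successive matching objects by smooth spaces) produces, cofinally among all $h$-hypercovers of $X$, hypercovers $U_\bullet\to X$ with every term $U_n$ smooth. Along such a smooth hypercover all terms lie in $\Sm$, so the identifications of the previous paragraph give $\Vect(U_n)\simeq\Vect((U_n)_h)$ and $\Vect(X)\simeq\Vect(X_h)$ compatibly with the simplicial structure. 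The tautological hyperdescent $\Vect(X_h)\xrightarrow{\ \simeq\ }\lim_{[n]\in\Delta}\Vect((U_n)_h)$ therefore transports to $\Vect(X)\xrightarrow{\ \simeq\ }\lim_{[n]\in\Delta}\Vect(U_n)$, which is precisely the sheaf condition. As smooth $h$-hypercovers are cofinal, this proves that $\Vect$ is an $h$-sheaf on $\Sm_h$.
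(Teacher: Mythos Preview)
Your approach is essentially the paper's own (implicit) argument, spelled out more concretely: the Remark immediately following the corollary records that $\Sm$ forms a basis of $\Rig_h$ and that, for groupoid-valued presheaves, the sheaf and hypersheaf conditions coincide, so the tautological $h$-sheaf $X\mapsto\Vect(X_h)$ may be restricted to $\Sm$ and identified with $\Vect$ via \cref{thm.descent of vector bundles}. You unwind this abstract basis/unfolding argument into an explicit verification along smooth hypercovers, which is correct in spirit.

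One small correction is needed. Your claim that ``the $h$-topos has enough points and is therefore hypercomplete'' is not valid: having enough points does not imply hypercompleteness for $\infty$-topoi. Fortunately you do not actually need hypercompleteness. Since $\Vect$ takes values in $1$-truncated spaces (groupoids), \v{C}ech descent and hyperdescent for $\Vect$ coincide automatically, because only finitely many levels of a hypercover contribute to the totalization. This is exactly what the paper invokes in \cref{rem.unfolding hypersheaf} via \cite[Lemma 6.5.2.9]{lurie2006higher}. Replace the hypercompleteness claim with this truncatedness observation and your argument goes through.
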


\begin{remark}
\label{rem.unfolding hypersheaf}
    Since $\Sm$ does not admit fibre products, in order to talk about the $\eh$-topology on it, one can either use the language of covering sieves or require it to be an $\eh$-hypersheaf. These notions are equivalent by \cite[Lemma 6.5.2.9]{lurie2006higher} since the value is $2$-truncated. Since $\Sm$ forms a basis in $\Rig_\eh$ by the local smoothness of the $\eh$-topology, one can unfold a (hyper)sheaf on $\Sm_\eh$ to a (hyper)sheaf on $\Rig_\eh$ which induces an equivalence
    $$\Shv(\Sm_\eh,\mathrm{Groupoids})\xrightarrow{}\Shv(\Rig_\eh,\mathrm{Groupoids}).$$
\end{remark}
\begin{remark}
    One can give an alternative proof of \cref{thm.descent of vector bundles} by arguing that vector bundles on smooth rigid-analytic varieties satisfy $v$-descent. Indeed, vector bundles of smooth rigid-analytic varieties are equivalent to pro-\'etale vector bundles having a trivial geometric Sen operator by \cite[Theorem 4.8]{heuer2023p}. However, the latter prestack satisfies $v$-descent by arguing similarly as in \cite[Theorem 3.5.8]{kedlaya2016relative2}. Such facts are also observed in \cite[Theorem 7.13]{heuer2022moduli}, where we refer the reader for more details. The proof we give in the following is more geometric and intrinsic in flavor.
\end{remark}

By the following lemma, we will not distinguish vector bundles on (big) \'etale and (big) analytic sites of $X$.

\begin{lemma}[{\cite[Proposition 8.2.3]{fresnel2003rigid}}]
        \label{lem.vector bundle=fppf vector bundle}
    Let $X$ be a rigid-analytic variety and $F$ be a vector bundle on $X_\an$. Then we have the following identifications.
    \begin{enumerate}
        \item $ R\Gamma_\an(X,F)\xrightarrow{\simeq } R\Gamma_\et(X,F).$
        \item $ \Vect(X_\et) \xrightarrow{\simeq} \Vect(X_\an)$
        \item $\Vect((\Rig/X)_\tau)\xrightarrow{\simeq} \Vect(X_\tau) $ for $\tau\in \{\an,\et \}$.
    \end{enumerate}
\end{lemma}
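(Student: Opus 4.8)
The plan is to reduce all three identifications to faithfully flat descent along finite surjective flat maps, which is precisely the extra input that \cref{cor. decompose fppf} isolates on top of étale descent. The comparison between the analytic and étale topologies in (1) and (2) is classical: for a coherent sheaf on an affinoid, $R\Gamma_\an$ is concentrated in degree zero by Kiehl's acyclicity theorem, and étale descent of coherent sheaves (equivalently, the fact that $U\mapsto \mathcal{O}(U)$ is an étale sheaf with vanishing higher cohomology on affinoids) upgrades this to $R\Gamma_\an(X,F)\simeq R\Gamma_\et(X,F)$ and $\Vect(X_\an)\simeq \Vect(X_\et)$. The genuinely new content is passing from the étale to the fppf topology, and here the strategy is uniform: exhibit the relevant presheaf as an fppf sheaf via \cref{cor. decompose fppf}.

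First I would treat the cohomological statement (1). Consider the presheaf of spaces $U\mapsto R\Gamma_\et(U,F|_U)$ on $X_\fppf$ (using Dold--Kan/Eilenberg--MacLane to view a connective complex as a space). By \cref{cor. decompose fppf} this is an fppf sheaf once we know (a) it is an étale sheaf, which is the analytic--étale comparison just recalled, and (b) it satisfies descent along every finite surjective flat morphism $Y\to X$ of affinoids. For (b), write $X=\Spa(A)$ and $Y=\Spa(B)$; by \cref{lem.flatness of rigid spaces implies flatness of rings} the map $A\to B$ is finite and faithfully flat. Because $B$ is module-finite over $A$, each term of the Čech nerve $B^{\widehat{\otimes}_A \bullet}$ is already $A$-finite, hence complete, so no completion intervenes and the Čech complex of $F(Y)=M\otimes_A B$ is literally the Amitsur complex of the finite projective $A$-module $M=F(X)$. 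Faithfully flat descent of modules makes this complex a resolution of $M$, so the higher cohomology vanishes and $H^0$ recovers $M$; this yields $R\Gamma_\fppf(X,F)\simeq R\Gamma_\et(X,F)$.

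For (2) I would run the same mechanism with the groupoid-valued presheaf $U\mapsto \Vect(U_\an)$. Étale descent of vector bundles supplies hypothesis (a) of \cref{cor. decompose fppf}, and classical faithfully flat descent along the finite faithfully flat $A\to B$ supplies hypothesis (b): finite projective $A$-modules are equivalent to finite projective $B$-modules equipped with a descent datum, which is exactly the limit $\lim \Vect(Y^{\bullet/X})$. Hence $U\mapsto \Vect(U)$ is an fppf stack and $\Vect(X_\fppf)\simeq \Vect(X_\et)\simeq \Vect(X_\an)$. For (3), the projection $\Id_X\colon (\Rig/X)_\tau\to X_\tau$ induces $\Id_X^*\colon \Vect(X_\tau)\to \Vect((\Rig/X)_\tau)$, whose inverse is restriction along the terminal object: the key point is that a finite locally free $\mathcal{O}$-module $E$ on the big site is canonically the pullback of $E|_X$, since for any $g\colon T\to X$ the comparison map $g^*(E|_X)\to E|_T$ is an isomorphism, as one checks after a $\tau$-cover of $T$ trivialising $E$.

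The main obstacle is the finite flat surjective descent step: one must verify that faithfully flat descent transports cleanly into the rigid-analytic setting, i.e. that finiteness of $A\to B$ removes all completed-tensor-product subtleties so that the Čech nerve computes the honest Amitsur complex, and --- crucially --- that controlling this single class of covers suffices for genuine derived fppf cohomology rather than merely Čech cohomology. The latter is exactly what \cref{cor. decompose fppf} guarantees, so the real work is the module-theoretic descent, which becomes routine once the finiteness reduction is in place.
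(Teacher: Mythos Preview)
Your proposal is correct and aligns with the paper's own proof, which is brief: it first cites Mathew's general faithfully flat descent result, and then as an alternative notes one can argue concretely via \cref{cor. decompose fppf} together with algebraic faithfully flat descent (for the étale--fppf step) and \cite[Proposition 8.2.3]{fresnel2003rigid} (for the analytic--étale step). Your argument is exactly this second route, spelled out in detail; the only quibble is terminological: $R\Gamma_\et(U,F|_U)$ is coconnective rather than connective, but since on affinoids it is concentrated in degree $0$ you are really just verifying that the ordinary presheaf $U\mapsto F(U)$ satisfies the finite-flat descent condition of \cref{cor.\ decompose fppf}, which is the Amitsur argument you give.
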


Due to this lemma, we do not distinguish vector bundles in these topologies in the following and denote them as $\Vect(-)$.

\begin{lemma}
\label{lem.vector bundles statisfies abstract blowup squares}
    Let $p_Z:\Bl_Z(Y)\to Y$ be a smooth blowup of a nowhere dense smooth rigid-analytic variety $Y$. Suppose $E$ is the exceptional divisor and $F\in \Vect(Y)$ is a vector bundle on $Y$. Then we have the following:
    \begin{enumerate}
    \item There is a fibre sequence
    $$R\Gamma(Y,F ) \to R\Gamma(\Bl_Z(Y),F) \oplus R\Gamma(Z,F)\to R\Gamma(E,F).$$
    \item The pullback gives an equivalence
    $$\Vect({Y})\simeq \Vect({Z})\times_{\Vect({E})}\Vect{({\Bl_Z (Y)})}$$
    of groupoids.
    \end{enumerate}    
\end{lemma}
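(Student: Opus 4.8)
The plan is to prove $(1)$ first and then deduce $(2)$ from it. For $(1)$, since $F$ is a vector bundle and $p_Z:\Bl_Z(Y)\to Y$ is proper, the projection formula gives $Rp_{Z*}(p_Z^*F)\simeq F\otimes^L Rp_{Z*}\mathcal{O}_{\Bl_Z(Y)}$, and likewise for the other maps; so it suffices to produce on $Y$ a distinguished triangle of structure sheaves and then tensor with $F$ before applying $R\Gamma(Y,-)$. Writing $i:Z\hookrightarrow Y$, $\pi:E\to Z$ and $j=i\circ\pi:E\to Y$, I would establish that the square
\[
\begin{tikzcd}
\mathcal{O}_Y\ar[r]\ar[d] & Rp_{Z*}\mathcal{O}_{\Bl_Z(Y)}\ar[d]\\
i_*\mathcal{O}_Z\ar[r] & Rj_*\mathcal{O}_E
\end{tikzcd}
\]
is cartesian, whence applying $-\otimes F$ and $R\Gamma(Y,-)$ yields exactly the asserted fibre sequence.

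The two cohomological inputs are $Rp_{Z*}\mathcal{O}_{\Bl_Z(Y)}\simeq\mathcal{O}_Y$ and $Rj_*\mathcal{O}_E\simeq i_*R\pi_*\mathcal{O}_E\simeq i_*\mathcal{O}_Z$; granting these, both horizontal arrows of the square are isomorphisms and the square is automatically cartesian. Because $Y$ and $Z$ are smooth, $E=\mathbb{P}(N_{Z/Y})$ is a projective bundle over $Z$ of relative dimension $c-1$, and $R\pi_*\mathcal{O}_E\simeq\mathcal{O}_Z$ is the rigid-analytic projective bundle formula, reducing to $H^0(\mathbb{P}^{c-1},\mathcal{O})=C$ and $H^{>0}(\mathbb{P}^{c-1},\mathcal{O})=0$. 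For $Rp_{Z*}\mathcal{O}_{\Bl_Z(Y)}\simeq\mathcal{O}_Y$ I would argue analytic-locally on $Y$: both assertions are local on the base, so one may take $Y$ affinoid and, using smoothness of the pair $(Y,Z)$, reduce to the model case where $Z\hookrightarrow Y$ is a coordinate polydisc inside a polydisc. There $\Bl_Z(Y)$ is the relative analytification of the corresponding algebraic blowup, so the identity follows from the classical fact that $Rp_*\mathcal{O}=\mathcal{O}$ for blowups of regular schemes along regular centers, together with the compatibility of coherent cohomology under relative analytification (flatness via \cref{cor.relative analytification is flat}, and the proper mapping / GAGA comparison).

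For $(2)$, full faithfulness is formal from $(1)$: for $F,G\in\Vect(Y)$ the internal hom $\uHom(F,G)=F^\vee\otimes G$ is again a vector bundle, so the $H^0$-terms of the fibre sequence of $(1)$ give that $\Hom_Y(F,G)$ is the fibre product of $\Hom_{\Bl_Z(Y)}(p_Z^*F,p_Z^*G)$ and $\Hom_Z(i^*F,i^*G)$ over $\Hom_E$, which is precisely fully faithfulness of the pullback functor into the fibre-product groupoid. For essential surjectivity, given a descent datum $(F_{\Bl_Z(Y)},F_Z,\phi)$ I would glue it to $F_Y:=p_{Z*}F_{\Bl_Z(Y)}\times_{j_*(F_{\Bl_Z(Y)}|_E)}i_*F_Z$, using $\phi$ to form the fibre product, show $F_Y$ is coherent, and then verify that $F_Y$ is locally free of the correct rank with $p_Z^*F_Y\simeq F_{\Bl_Z(Y)}$ and $i^*F_Y\simeq F_Z$ compatibly with $\phi$; both local freeness and the compatibility of the pullbacks are checked analytic-locally on $Y$ via base change and the cohomological identities established in $(1)$.

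The main obstacle is the essential-surjectivity step of $(2)$: gluing along the non-flat maps $p_Z$ and $i$ and certifying that the coherent sheaf $F_Y$ is honestly a vector bundle of the correct rank (rather than merely reflexive or torsion-free) requires the sharp local form of the blowup computation from $(1)$ together with a careful base-change argument. The technical heart feeding into this is establishing $Rp_{Z*}\mathcal{O}_{\Bl_Z(Y)}\simeq\mathcal{O}_Y$ rigorously in the rigid-analytic category, where the classical algebraic statement is only available after passing through relative analytification and GAGA.
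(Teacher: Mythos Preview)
Your treatment of (1) and of the full-faithfulness half of (2) is correct and essentially matches the paper: both reduce (1) to the structure-sheaf case (the paper by citing \cite[Proposition 5.1.1]{guo2019hodge}, you by establishing $Rp_{Z*}\mathcal{O}_{\Bl_Z(Y)}\simeq\mathcal{O}_Y$ and $R\pi_*\mathcal{O}_E\simeq\mathcal{O}_Z$ directly and observing that this makes both horizontal arrows of your square isomorphisms), and both deduce full faithfulness by applying the resulting $H^0$ exact sequence to the vector bundle $\uHom(F_1,F_2)$.

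The gap is in essential surjectivity. Defining $F_Y$ as the fibre product $p_{Z*}F_{\Bl_Z(Y)}\times_{j_*(F_{\Bl_Z(Y)}|_E)}i_*F_Z$ is reasonable, but your plan to verify local freeness and the pullback compatibilities ``analytic-locally on $Y$ via base change and the cohomological identities established in (1)'' does not close. The identities in (1), and even your sharper input $Rp_{Z*}\mathcal{O}\simeq\mathcal{O}_Y$, control only sheaves \emph{pulled back from $Y$}; via the projection formula they give $Rp_{Z*}(p_Z^*G)\simeq G$, but they say nothing about $p_{Z*}F_{\Bl_Z(Y)}$ or the counit $p_Z^*p_{Z*}F_{\Bl_Z(Y)}\to F_{\Bl_Z(Y)}$ for an \emph{arbitrary} bundle on the blowup---and this counit is not an isomorphism in general. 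Passing to a small affinoid in $Y$ does not help either, since the blowup is still proper over it and $F_{\Bl_Z(Y)}$ need not trivialize there. What is missing is a genuine use of the datum $\phi$: the constraint $F_{\Bl_Z(Y)}|_E\simeq\pi^*F_Z$ is what forces $F_{\Bl_Z(Y)}$ to descend, and you have not indicated how to exploit it.

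The paper supplies this by a different and more substantive route. After reducing to $Y=\Spa(A)$ affinoid with $A$ complete along the ideal $I$ of $Z$ (via Zariski and faithfully flat descent), deformation theory gives $\Vect(Z)\simeq\Vect(Y)$, so $F_Z$ already extends to some $G\in\Vect(Y)$; the problem becomes lifting the isomorphism $\psi_0:p_Z^*G|_E\simeq F_{\Bl_Z(Y)}|_E$ to all of $\Bl_Z(Y)$. Grothendieck's existence theorem reduces this to lifting across each nilpotent thickening $E_n$ of $E$, and the obstruction to lifting from $E_n$ to $E_{n+1}$ lies in $H^1\bigl(E,\,p_Z'^*i^*\underline{\mathrm{End}}(G)\otimes\mathcal{I}^n/\mathcal{I}^{n+1}\bigr)$, which vanishes because $\mathcal{I}^n/\mathcal{I}^{n+1}$ is $\mathcal{O}_E(n)$ on the projective bundle $E\to Z$ and has no higher cohomology for $n\ge1$. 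This obstruction calculation is precisely the missing ingredient in your sketch.
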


\begin{proof}
    Denote closed immersions $Z\to Y$, $E\to \Bl_Z(Y)$ as $i$ and $i'$, respectively, and the projection $E\to Z$ as $p_Z'$. Let $\mathcal{I}$ as the ideal sheaf of $E$. Since the statements are local we may assume that $Y=\Spa(A)$ and $Z=\Spa(A/I)$ are both affinoid. In this case every vector bundle on $Y$ is of the form $\widetilde{M}$ for some flat $A$-module $M$ by \cite[Theorem B.5.2]{zavyalov2024quotients}. Hence we may identify 
    $$\Vect(Y_a)\xrightarrow{\simeq }\Vect(Y)$$
    and similarly for $Z$. Moreover, since $Y$ is affinoid and blowups of $Y$ are relative analytifications of the algebraic blowups, by relative GAGA, c.f. \cite[Theorem 9.4.1]{fujiwara2013foundations}, we have 
    $$\Vect(\Bl_{Z_a}(Y_a))\xrightarrow{\simeq }\Vect(\Bl_Z(Y)),\ 
     \Vect(E_a)\xrightarrow{\simeq }\Vect(E)  $$
    where $E_a$ is the exceptional divisor associated to the algebraic blowup. In the sequential of the proof, we will not distinguish between the algebraic vector bundles and the analytic vector bundles.

    We first prove (1). Suppose $F$ is of the form $\widetilde{M}$ for some flat $A$-module $M$. Since $M$ is the direct summand of some free $A$-module, we may assume further that $F$ is trivial. This is proved by \cite[Proposition 5.1.1]{guo2019hodge}. In particular, since $Y$ is affinoid, statement (1) implies that there is a short exact sequence,
    $$0\to \Gamma(Y,\mathcal{O}_{Y})\to \Gamma(\Bl_{Z}(Y),\mathcal{O}_{\Bl_{Z}(Y)}) \oplus \Gamma(Z,\mathcal{O}_{Z}) \to \Gamma(E,\mathcal{O}_{E})\to 0.$$

    We now prove (2). For any $F_1,F_2 \in \Vect(Y_\et)$, notice that $$\Hom(F_1,F_2)=\Hom(\mathcal{{O}}_Y,\underline{\Hom}(F_1,F_2))= \Gamma(Y,\underline{\Hom}(F_1,F_2)).$$
    Hence one can deduce the full faithfulness of the pullback functor immediately from the above short exact sequence. As for essential surjectivity, we have to show that for any vector bundles $F_1\in \Vect(Z)$ and $F_2\in \Vect(\Bl_Z(Y))$ with an isomorphism $ p_Z'^*F_1\simeq i'^*F_2$, there is a vector bundle $G\in \Vect(Y)$ gluing this datum. By Zariski descent and faithfully flat descent, we may reduce to the case where $A$ is $I$-adically complete. Hence by the deformation theorem and the fact that $Y$ is affinoid, we know that $\Vect(Z)\simeq \Vect(Y)$. Therefore, there is a vector bundle $G\in \Vect(Y)$ corresponding to $F_1\in \Vect(Z)$ and an isomorphism $\psi_0:i'^*p_Z^*G \simeq i'^*F_2$. Now it reduces to show that there is a lift $\psi:p_Z^*G \simeq F_2$ of $\psi_0$ on $\Bl_Z(Y)$. By Grothendieck's existence theorem, c.f. \cite[Theorem 08BE]{stacks-project}, it suffices to show that $\psi_0$ lifts to any nilpotent thickening $E_n$ where $E_n$ is the closed subscheme defined by the ideal sheaf $\mathcal{I}^{n+1}$. By induction, we may assume there is a lift $\psi_n$ on $E_n$. Viewing $\psi_0$ as a section in $i'^*\underline{\Hom}(p_Z^*G, F_2)$, the obstruction class that lift from $E_n$ to $E_{n+1}$ lies in 
    $$H^1(\Bl_Z(Y),\underline{\Hom}(p_Z^*G, F_2)\otimes \mathcal{I}^n/\mathcal{I}^{n+1} )=H^1(E,p_Z'^*i^*\underline{\mathrm{End}}(G)\otimes \mathcal{I}^n/\mathcal{I}^{n+1} )$$
    where the isomorphism is given by the $\mathcal{O}_E$-module structure on $\mathcal{I}^n/\mathcal{I}^{n+1}$ and projection formula. By \cite[II Theorem 8.24]{Hartshorne} and \cite[Lemma 30.8.4]{stacks-project} we have
    $$H^j(E, \mathcal{I}^n/\mathcal{I}^{n+1} )=0$$
    for all $n\ge 1$ and $j\ge 1$. We will show that
    $$H^j(E,p_Z'^*N \otimes\mathcal{I}^n/\mathcal{I}^{n+1} )=0$$
    for $j\ge 1$ and any coherent sheaf $N$ on $Z$ which shows the vanishing of the obstruction class. To this end, we first write $N$ as a quotient of $\mathcal{O}_Z^k$ for some $k$ and denote the kernel as $K$. Since $p_Z'$ is flat and $\mathcal{I}^n/\mathcal{I}^{n+1}$ is a line bundle on $E$, we have a short exact sequence
    $$0\to p_Z'^*K\otimes\mathcal{I}^n/\mathcal{I}^{n+1} \to \mathcal{O}_E^k \otimes\mathcal{I}^n/\mathcal{I}^{n+1} \to p_Z'^*N\otimes\mathcal{I}^n/\mathcal{I}^{n+1}  \to 0. $$
    Taking the associated long exact sequence, we have 
    $$ \cdots\to  H^j(E,\mathcal{O}_E^k\otimes\mathcal{I}^n/\mathcal{I}^{n+1} ) \to H^j(E,p_Z'^*N\otimes\mathcal{I}^n/\mathcal{I}^{n+1}) \to H^{j+1}(E,p_Z'^*K\otimes\mathcal{I}^n/\mathcal{I}^{n+1})\to \cdots .$$
     The first term vanishes by the claim. By dimension bounds and induction, we know that the third term vanishes. Hence we have the desired vanishing of the second term. 
\end{proof}

Finally we present the proof of Theorem \ref{thm.descent of vector bundles}. This is a standard proof, c.f. \cite[Theorem 3.6]{geisser2006arithmetic}.

\begin{proof}[Proof of Theorem \ref{thm.descent of vector bundles}]
    We first verify 
    $$R\rho_{X,*}\rho_{X}^* F=F$$
    for all smooth $X$ and vector bundle $F$. This implies that $\rho_X^*$ is fully faithful. Since the statement is local, we may assume that $X$ is quasicompact. We want to show that the cone
    $$G\coloneqq \mathrm{cofib}(F\to R\rho_{X,*}\rho_X^*F)$$
    vanishes. It suffices to show that $H^i_\et(Y,G)$ vanishes for all $i\in \mathbb{Z}$ and $Y\in \Sm/X$. Assume the opposite that there is a nonzero element $x\in H^i_\et(Y,G)$ and $Y$ is a smooth rigid-analytic variety with the minimal dimension having such a property. Since $\rho_Y^* \simeq \rho_Y^*R\rho_{Y,*}\rho_Y^*$, we get $\rho_Y^*G\simeq 0$, i.e. there is a $\eh$-cover of $Y$ such that $G$ is equivalent to $0$ when pulled back to this cover. Since $G$ is already a sheaf in the \'etale topology, the cohomology class $x$ can not be trivialized by an \'etale cover. Hence by \cref{prop. guo decompose}, it suffices to show that $x$ does not vanish when restricted to a smooth blowup $p_Z:\Bl_Z(Y)\to Y$ along a nowhere dense closed subset $Z$. Denote the exceptional divisor as $E$. By Lemma \ref{lem.vector bundles statisfies abstract blowup squares} (1), there is a commutative diagram with exact rows
    \[
\begin{tikzcd}
    H^i_\et(Y,F)\arrow[d,"\tau_Y"]\arrow[r]& H^i_\et(Z,F)\oplus H^i_\et(\Bl_Z(Y),F)\arrow[d,"\tau_Z"',"\tau_{\Bl_Z(Y)}"] \ar[r] & H^i_\et(E,F)  \ar[d,"\tau_E"]\\ 
    H^i_\eh(Y,\rho_Y^*F)\arrow[r]& H^i_\eh(Z,\rho_X^*F)\oplus H^i_\eh(\Bl_Z(Y),\rho_Y^*F) \arrow[r]& H^i_\eh(E,\rho_Y^*F).
\end{tikzcd}
\]
By the assumption of the minimality of the dimension of $X$, $\tau_Z$ and $\tau_E$ are isomorphisms. Therefore $p_Z^*$ is injective on $H^i_\et(-,F)$ showing that $p_Z^*(x)\not= 0$ whence a contradiction.

Now we prove that $\rho_X^*$ is essentially surjective for all smooth rigid-analytic varieties $Y$. Clearly, the statement is true for dimension 0. Hence by induction on dimension, we may assume that the statement is true for smooth rigid-analytic varieties $Z$ with $\dim(Z)< \dim(Y)$.
Suppose $F$ is a vector bundle on $Y_\eh$. Again, by \cref{prop. guo decompose}, it suffices to show that if $F$ is trivial after pulling back to a smooth blowup along a smooth nowhere dense closed subset $Z$, then $F$ is already locally free. By induction hypothesis, $\Vect(Z_\eh)$ is equivalent to $ \Vect(Z)$. Hence we get a object in $\Vect({Z})\times_{\Vect({E})}\Vect{({\Bl_Z (Y)})}$. By Lemma \ref{lem.vector bundles statisfies abstract blowup squares} (2), we get a vector bundle $F'$ on $Y$. Thus $F$ is the essential image of $F'$ since $\rho_Y^*F'$ and $F$ agree on the $\eh$-cover $\Bl_Z(Y)\coprod Z$ of $Y$.
\end{proof}

\subsection{Descent of differentials}
Let $\pi:\Rig_\eh\to \Rig_\et$ be the projection of sites and for any $X\in \Rig$, let $\pi_X:X_\eh\to (\Rig/X)_\et$ be its localization at $X$. By slightly abusing notations, we also denote the projection to the small \'etale site of $X$ as $\pi_X$. Note that there is also a natural projection $\sigma_X: X_v \to X_\eh$.

\begin{definition}
\label{def. absolute differentials on et and h}
    Denote $\Omega^j_\et$ as the sheaf of absolute differentials on $\Rig_\et$ given by 
    $$\Omega^j_\et:\Rig_\et \to \Ab$$
    sending $Y$ to $\Gamma(Y,\Omega^j_Y)$. When localized at some rigid-analytic variety $Y$, we also denote it as $\Omega^j_{\et}$. Similarly we denote $\Omega_{\eh}^j$ as the $\eh$-sheafification of the sheaf of absolute differentials on $\Rig_\et$, namely
    $$\Omega^j_\eh\coloneqq\pi^*\Omega_\et^j.$$
\end{definition}

\begin{lemma}
    Let $X$ be a rigid-analytic variety. Then $\Omega^j_\eh$ on $X_\eh$ it is equivalent to the sheaf $R^j\sigma_{X,*}\widehat{\mathcal{O}}_{X_v}(j)$. In particular it unfolds the sheaf sending $Y\in \Sm/X$ to $\Gamma (Y,\Omega^j_Y)$.
\end{lemma}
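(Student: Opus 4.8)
The plan is to exploit that the smooth rigid spaces $\Sm$ form a basis of $\Rig_h$ by local smoothness of the $h$-topology (\cite[Corollary 2.4.8]{guo2019hodge}, cf.\ \cref{rem.unfolding hypersheaf}), so that an $h$-sheaf is determined by its restriction to $\Sm$ and two presheaves on $\Rig$ agreeing on $\Sm$ acquire isomorphic $h$-sheafifications. Both sides of the claimed identity are, tautologically, $h$-sheafifications of explicit presheaves: by definition $\Omega^j_h=\pi^*\Omega^j_\et$ is the $h$-sheafification of $Y\mapsto\Gamma(Y,\Omega^j_Y)$, while, by the description of higher pushforward along a morphism of sites, $R^j\sigma_{X,*}\widehat{\mathcal{O}}_{X_v}(j)$ is the $h$-sheafification of the presheaf $Y\mapsto H^j(Y_v,\widehat{\mathcal{O}}_{Y_v}(j))$. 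So the strategy is to identify these two presheaves after restriction to the smooth basis and then sheafify.

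First I would push the $v$-presheaf down to the étale site. Writing $\mu=\pi\circ\sigma\colon\Rig_v\to\Rig_\et$ for the composite projection, and using that the $h$-topology is finer than the étale topology (so that $h$-sheafification factors through étale sheafification), one gets
\[
R^j\sigma_{X,*}\widehat{\mathcal{O}}_{X_v}(j)\;\simeq\;\pi^*\bigl(R^j\mu_{*}\widehat{\mathcal{O}}_v(j)\bigr),
\]
where $R^j\mu_{*}\widehat{\mathcal{O}}_v(j)$ is the étale sheaf obtained by étale-sheafifying $Y\mapsto H^j(Y_v,\widehat{\mathcal{O}}(j))$. Hence the desired identity $\Omega^j_h\simeq R^j\sigma_{X,*}\widehat{\mathcal{O}}_{X_v}(j)$ reduces to $\pi^*\Omega^j_\et\simeq\pi^*\bigl(R^j\mu_*\widehat{\mathcal{O}}_v(j)\bigr)$, i.e.\ to checking that the two étale sheaves $\Omega^j_\et$ and $R^j\mu_*\widehat{\mathcal{O}}_v(j)$ agree after restriction to $\Sm$.

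Next I would compute $R^j\mu_*\widehat{\mathcal{O}}_v(j)$ on a smooth $Y$ via the Hodge--Tate comparison. Since $\widehat{\mathcal{O}}$ satisfies $v$-descent on diamonds (\cite{scholze2017etale}), one has $R\mu_*\widehat{\mathcal{O}}_v\simeq R\nu_*\widehat{\mathcal{O}}_{\proet}$ for the pro-étale projection $\nu$, and Scholze's sheafified Hodge--Tate decomposition for smooth rigid spaces (\cite{scholze2013adic}) gives $R^j\nu_{Y,*}\widehat{\mathcal{O}}_Y\simeq\Omega^j_Y(-j)$. Twisting yields $R^j\mu_{Y,*}\widehat{\mathcal{O}}_v(j)\simeq\Omega^j_Y$ as étale sheaves on smooth $Y$, which is precisely the restriction of $\Omega^j_\et$ to $\Sm$. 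As $\Sm$ is a basis for $\Rig_h$, the two $h$-sheafifications coincide, establishing $\Omega^j_h\simeq R^j\sigma_{X,*}\widehat{\mathcal{O}}_{X_v}(j)$; concretely the comparison map is the one induced by the $d\log$/Hodge--Tate map, and the argument shows it is an isomorphism on the basis.

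Finally, for the ``unfolds'' assertion I would verify that the presheaf $Y\mapsto\Gamma(Y,\Omega^j_Y)$ is \emph{already} an $h$-sheaf on $\Sm$, so that no further sheafification occurs and the value of $\Omega^j_h$ on smooth $Y$ is literally $\Gamma(Y,\Omega^j_Y)$. This is the analogue for $\Omega^j$ of \cref{thm.descent of vector bundles}: by the structure theorem \cref{thm. h-topology factorization theorem} it suffices to check descent along fppf covers, étale covers, finite surjective covers and smooth blowups; fppf and étale descent of differentials is standard, finite surjective covers of smooth spaces are refined by smooth blowups and fppf covers via \cref{cor. refine finte surjective}, and smooth blowup descent is the blowup exact sequence for $\Omega^j$ (the differential-form analogue of \cref{lem.vector bundles statisfies abstract blowup squares}, due to \cite{guo2019hodge}). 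I expect this last point to be the main obstacle: unlike vector bundles, one must confirm that $\Omega^j$ genuinely satisfies descent along smooth blowups and that the étale and $h$-sheafifications collapse on the smooth basis. Once the Hodge--Tate input and the blowup sequence are in hand, the identification itself is formal.
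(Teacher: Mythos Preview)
Your argument is sound, but note that the paper's own proof is a one-line citation to \cite[Proposition 2.25]{heuer2022line}; what you have written is essentially an outline of how one would reprove that cited result. Your first two paragraphs---reducing to the smooth basis via \cref{rem.unfolding hypersheaf}, identifying $R^j\sigma_{X,*}\widehat{\mathcal O}_{X_v}(j)$ with $\pi^*R^j\mu_*\widehat{\mathcal O}_v(j)$, and then invoking Scholze's computation $R^j\nu_{Y,*}\widehat{\mathcal O}_Y\simeq\Omega^j_Y(-j)$ on smooth $Y$---are exactly right and match the strategy behind Heuer's proposition.

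One caution about your final paragraph: the assertion that ``fppf descent of differentials is standard'' is a bit glib. The presheaf $Y\mapsto\Gamma(Y,\Omega^j_Y)$ on the big site is \emph{not} the sections of a fixed coherent sheaf (cf.\ \cref{warning. big differential and relative differential}), so fppf descent does not reduce to ordinary faithfully flat descent of modules; for a finite flat ramified cover of smooth spaces you genuinely need to control the equalizer of the absolute K\"ahler differentials of the fibre product, which takes an argument. A cleaner route, and closer to what Heuer does, is to bypass the structure-theorem decomposition entirely: the complex $R\sigma_*\widehat{\mathcal O}$ carries a functorial Hodge--Tate filtration whose graded pieces are $\Omega^j[-j](-j)$ on smooth objects, and since $R\Gamma_v(-,\widehat{\mathcal O})$ is already a $v$-sheaf of complexes (hence an $h$-sheaf), passing to graded pieces shows directly that $Y\mapsto\Gamma(Y,\Omega^j_Y)$ is a $v$-sheaf on $\Sm$, which is stronger than what you need. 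This avoids having to verify blowup and fppf descent by hand.
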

\begin{proof}
    This is by \cite[Proposition 2.25]{heuer2022line}.
\end{proof}

Like the Deligne-Dubois complex, $\eh$-differentials and the \'etale sheaf of differentials coincide on smooth spaces.

\begin{proposition}[{\cite[Theorem 4.0.2]{guo2019hodge}}]
\label{cor. differential coincide on sm}
    Let $X$ be a smooth rigid-analytic variety. Then for any $i\in \mathbb{N}$, we have $R\pi_{X,*}\Omega^i_\eh=\Omega^i_X$.
\end{proposition}

\begin{warning}
\label{warning. big differential and relative differential}
    We warn the reader that for some rigid-analytic variety $Y$, there are two different notions of differentials on the big \'etale site $(\Rig/Y)_\et$. The first one is the absolute differentials $\Omega^j_{\et}$ defined in \cref{def. absolute differentials on et and h}. The other one is the pullback of the differential sheaf $\Omega^j_Y$ on the small \'etale site $Y_\et$. They are different sheaves since they are different when restricting to some rigid-analytic variety over $Y$ which is not \'etale. Moreover, even when $Y$ is smooth, $\Omega^j_\et$ is not a vector bundle on $(\Rig/Y)_\et$. By Lemma \ref{lem.vector bundle=fppf vector bundle}, all the vector bundles on $(\Rig/Y)_\et$ are pulled back from the vector bundles on $Y_\et$. However, $\Omega^j_\et$ is not pulled back from $Y_\et$ as the rank may vary when restricted on different objects in $(\Rig/Y)_\et$ which are not \'etale over $Y$. For the same reason $\Omega^j_\et$ is not even a coherent sheaf on $(\Rig/Y)_\et$. It is only a sheaf of $\mathcal{O}_{\et}$-module. However, it is clear that its restriction to $Y_\et$ is still $\Omega^j_Y$, namely $\Id_{Y,*}\Omega^j_\et=\Omega^j_Y$. Consequently, there is a natural map $\Id_{Y}^*\Omega^j_Y\to \Omega^j_\et$ on $(\Rig/Y)_\et$. Applying the same argument to $\eh$-topology, we conclude that $\Omega^j_\eh$ is not a coherent $\mathcal{O}_\eh$-module.
\end{warning}

\subsection{Descent of Higgs bundles}
In this subsection, we assemble everything together and prove that on a smooth rigid-analytic variety $X$, Higgs bundles on $X_\eh$ are equivalent to Higgs bundles on $X_\et$.

\begin{definition}
    A Higgs bundle $(E,\theta)$ on $X_\tau$ where $\tau\in \{ \et,\eh\}$, is a finite locally free $\mathcal{O}_{X_\tau}$-module $E$ equipped with a $\mathcal{O}_{X_\tau}$-linear morphism 
    $$\theta:E\to E\otimes\Omega^1_{X_\tau}(-1)$$
    such that $\theta\wedge\theta=0$. Denote the groupoid of Higgs bundles on $X_\tau$ as $\Higgs(X_\tau)$. Given a Higgs field $\theta$ on $X_\tau$, it is equivalent to giving a global section on $\Gamma(X,\underline{\mathrm{End
    }}(E)\otimes \Omega^1_{X_\tau}(-1))$ such that its image in $\Gamma(X,\underline{\mathrm{End
    }}(E)\otimes \Omega^2_{X_\tau}(-2))$ vanishes.
\end{definition}

\begin{proposition}\label{prop. compare eh higgs with et higgs}
    For any smooth rigid-analytic variety $X$, there is an equivalence of groupoids
    $$\pi_{X,*}:\mathrm{Higgs}(X_\eh) \xrightarrow{\simeq }\mathrm{Higgs}(X_\et).$$
\end{proposition}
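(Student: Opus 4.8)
The plan is to reduce the equivalence $\mathrm{Higgs}(X_h) \simeq \mathrm{Higgs}(X_\et)$ to the two descent statements already established: descent of vector bundles (\cref{thm.descent of vector bundles}, unfolded via \cref{rem.unfolding hypersheaf}) and descent of differentials (\cref{cor. differential coincide on sm}). A Higgs bundle is a pair consisting of a vector bundle $E$ together with a Higgs field $\theta$, so the strategy is to handle the underlying bundle and the field separately and then glue the data compatibly.

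\textbf{Step 1: the underlying vector bundle.} First I would apply \cref{thm.descent of vector bundles}, which gives $\rho_X^*:\Vect((\Rig/X)_\fppf)\xrightarrow{\simeq}\Vect(X_h)$, combined with \cref{lem.vector bundle=fppf vector bundle}(3) identifying $\Vect((\Rig/X)_\fppf)\simeq \Vect(X_\et)$. Thus $\pi_{X,*}$ already induces an equivalence on the level of underlying bundles, and for a bundle $E$ on $X_\et$ the pushforward $R\pi_{X,*}$ recovers $E$ (using $R\rho_{X,*}\rho_X^*F=F$). This also shows $\pi_{X,*}$ and its left adjoint $\pi_X^*$ are mutually inverse on bundles, which is the key input.

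\textbf{Step 2: the Higgs field.} Given the equivalence on bundles, a Higgs field on $X_h$ is a global section of $\underline{\mathrm{End}}(E)\otimes\Omega^1_h(-1)$ with the integrability condition $\theta\wedge\theta=0$ living in $\underline{\mathrm{End}}(E)\otimes\Omega^2_h(-2)$. The crucial computation is that pushing forward the $h$-sheaf of differentials along $\pi_X$ recovers the étale differentials: by \cref{cor.cohomological dimension bound of h-differentials}, for smooth $X$ we have $R\pi_{X,*}\Omega^j_h=\Omega^j_X$. Therefore, using the projection formula and the fact that $E$ is a bundle (so $\underline{\mathrm{End}}(E)$ is too, and pushforward commutes with tensoring by it), one gets
\[
R\pi_{X,*}\bigl(\underline{\mathrm{End}}(E)\otimes\Omega^j_h(-j)\bigr)\simeq \underline{\mathrm{End}}(E)\otimes\Omega^j_X(-j).
\]
Taking global sections identifies Higgs fields (and their wedge squares) on $X_h$ with those on $X_\et$ functorially. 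The integrability condition is preserved because the identification is compatible with the wedge product $\Omega^1_h\otimes\Omega^1_h\to\Omega^2_h$, which lifts the étale wedge product; so $\theta\wedge\theta=0$ on one side if and only if it holds on the other.

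\textbf{Step 3: assembling the equivalence.} Finally I would assemble these into an equivalence of groupoids. Full faithfulness amounts to the statement that morphisms of Higgs bundles — i.e. morphisms of bundles commuting with the Higgs fields — are computed the same way on both sites, which follows from Step 1 (for the bundle morphisms, via $R\pi_{X,*}\underline{\mathrm{Hom}}(E_1,E_2)$ being the étale Hom) together with Step 2 (for compatibility with $\theta$). Essential surjectivity follows since every bundle on $X_h$ comes from $X_\et$ by Step 1, and every field on it corresponds to a field on $X_\et$ by Step 2. Symmetric monoidality is inherited because $\pi_X^*$ is monoidal and both the tensor product of bundles and the Leibniz rule for the induced Higgs field on a tensor product are preserved under the identifications above.

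\textbf{The main obstacle} I anticipate is Step 2: making precise that $R\pi_{X,*}$ commutes with tensoring by $\underline{\mathrm{End}}(E)$ and with the wedge-product structure so that the integrability condition transfers cleanly. The subtlety, flagged in \cref{warning. big differential and relative differential}, is that $\Omega^j_h$ is \emph{not} a coherent $\mathcal{O}_h$-module and the relevant sheaf on the big site is the absolute differential, not the pullback of $\Omega^j_X$; so one must be careful to apply the projection formula in a setting where $\underline{\mathrm{End}}(E)$ is genuinely pulled back from the small étale site (which it is, by \cref{lem.vector bundle=fppf vector bundle}) while $\Omega^j_h$ is handled by \cref{cor.cohomological dimension bound of h-differentials}. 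Once the projection formula is justified in this mixed setting, the rest is formal.
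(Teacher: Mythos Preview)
Your proposal is correct and follows essentially the same approach as the paper: reduce to descent of vector bundles (\cref{thm.descent of vector bundles}) for the underlying bundle, and use \cref{cor.cohomological dimension bound of h-differentials} together with the projection formula to identify the spaces of Higgs fields (and their wedge squares) on $X_h$ and $X_\et$. The paper packages the inverse functor slightly differently---constructing an explicit ${\pi_X^*}'$ that routes through $\Id_X^*$ and the natural map $\Id_X^*\Omega^1_X\to\Omega^1_\et$ of \cref{warning. big differential and relative differential}---but this is exactly the careful handling of the big-versus-small differential issue you flagged as the main obstacle, and the verification that the two functors are mutually inverse boils down to the same global-section identification you wrote in Step~2.
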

\begin{proof}
    Given a Higgs bundle $(E,\theta)$ on $X_\eh$, by \cref{thm.descent of vector bundles}, $E$ is pulled back from a vector bundle on the \'etale site. Hence by \cref{cor. differential coincide on sm} and the projection formula of the ringed topoi, the functor $\pi_{X,*}$ defines a $\mathcal{O}_X$-linear morphism $\pi_{X,*}E\to \pi_{X,*}E\otimes \Omega^1_X(-1)$. Again by projection formula, we know that $\pi_{X,*}\theta\wedge\pi_{X,*}\theta=0$ showing that $\pi_{X,*}\theta$ is indeed a Higgs field. We then construct the converse of $\pi_{X,*}$. Given an \'etale Higgs bundle $(E,\theta)$ on $X$, one can pull back to the big \'etale site via $\Id_X^*$. After composing with the natural map in Warning \ref{warning. big differential and relative differential}, we get a morphism of $\mathcal{O}_\et$-modules
    $$ \Id_X^*E \to \Id_X^*E\otimes \Id_X^*\Omega^1_X(-1)\to \Id_X^*E\otimes\Omega^1_\et(-1).$$
    After further applying $\pi_X^*$ we get a morphism of $\mathcal{O}_h$-modules. Since pullback is exact and strongly symmetric monoidal, it clearly preserves the Higgs field condition. Denote the functor as ${\pi_X^*}'$. We need to verify that $\pi_{X,*}{\pi_X^*}' \simeq \Id$ and vice versa.

    To show that $\pi_{X,*}{\pi_X^*}' \simeq \Id$, note first that by \cref{thm.descent of vector bundles}, \cref{cor. differential coincide on sm} and the projection formula, we have $\pi_{X,*}{\pi_X^*}' E \simeq E$ and $\pi_{X,*}{\pi_X^*}' (E\otimes \Omega^1_X(-1)) \simeq E\otimes \Omega^1_X(-1)$ for any \'etale Higgs bundle $(E,\theta)$. Hence it suffices to show that $\pi_{X,*}{\pi_X^*}'  \theta \simeq \theta$. To this end, observe that 
    $$ \Gamma(X,\underline{\mathrm{End}}(E)\otimes \Omega^1_X(-1)) \xrightarrow{\simeq } \Gamma(X,\Id_X^*\underline{\mathrm{End}}(E)\otimes \Omega^1_\et(-1) )\xrightarrow{\simeq } \Gamma(X,\pi_X^*\underline{\mathrm{End}}(E)\otimes \Omega^1_h(-1) )$$
    are all isomorphic to each other. The first isomorphism is by \cite[Lemma 03YU]{stacks-project} and the fact that $\Id_{X,*}\Omega^1_\et(-1)\simeq \Omega^1_X(-1)$. The second isomorphism is clear since $\underline{\mathrm{End}}(E)$ is a vector bundle and by the same reasoning above. The isomorphisms above actually show the converse direction as well.
\end{proof}

\begin{corollary}
\label{cor.Higgs bundle has h descent for smooth rigid-analytic varieties}
    The presheaf 
    $$\Higgs: \Sm_\eh \to \mathrm{Groupoids}$$
    sending $Y$ to $\Higgs(Y)$ is an $\eh$-sheaf.
\end{corollary}

We will then prove that Higgs bundles on $\eh$-site have bounded cohomological dimension. To this end, we first specify the meaning of the cohomology of a Higgs bundle in $\eh$-topology.

\begin{definition}
\label{def.definition of dolbeault cohomology}
    Let $X$ be a rigid-analytic variety and $(E,\theta)$ be a Higgs bundle on $X_\tau$, where $\tau\in \{ \et,\eh\}$. Consider the coconnective complex 
    $$C_\Higgs(E,\theta)\coloneq( E\to E\otimes \Omega^1_{X_\tau}(-1)\to E\otimes \Omega^2_{X_\tau}(-2)\to \cdots)$$
    We define the Dolbeault cohomology (or the cohomology of the Higgs bundle) of $(E,\theta)$ as the hypercohomology of $C_\Higgs(E,\theta)$, namely 
    $$R\Gamma_\tau(X,(E,\theta))\coloneq R\Gamma_\tau(X,C_\Higgs(E,\theta)).$$
\end{definition}
\begin{remark}
    We explain one of the main differences between $\tau=\et$ and $\tau=\eh$ in the definition even when $X$ is smooth. Suppose $X$ has dimension $n$. Then on $X_\et$, the Higgs complex is a bounded complex in cohomological degree $[0,n]$. On the other hand, since $\Omega^j_\eh \neq 0$ for $j\ge n+1$ even when $X$ is smooth, the Higgs complex on $X_\eh$ is an unbounded coconnective complex. However, we will prove later that Doldeault cohomology always has the correct cohomological dimension bound. Moreover, for any Higgs bundle $(E,\theta)$ on the $\eh$-topology of a smooth rigid-analytic variety $X$, we have 
    $$R\Gamma_\eh(X,(E,\theta))=R\Gamma_\et(X,\pi_{X,*}(E,\theta)).$$
\end{remark}

\begin{lemma}
\label{lem.derived pushforward of Higgs complex}
    Let $X$ be a smooth rigid-analytic variety and $(E,\theta)$ be a Higgs bundle on $X_\eh$. Then we have $R\pi_{X,*}C_\Higgs(E,\theta)$ lives in cohomological dimension $[0,2n]$ in $D^b_\mathrm{coh}(X_\et)$.
\end{lemma}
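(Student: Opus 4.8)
The plan is to reduce the statement to the behaviour of $R\pi_{X,*}$ on the individual terms of the Higgs complex, and then run the hypercohomology spectral sequence of the stupid filtration, controlling each term by \cref{cor.cohomological dimension bound of h-differentials}. Set $n=\dim X$.

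First I would record that the underlying bundle descends. By \cref{thm.descent of vector bundles} together with \cref{lem.vector bundle=fppf vector bundle}, every vector bundle on $X_h$ has the form $E\simeq\pi_X^*E_0$ for a vector bundle $E_0$ on $X_\et$. Hence the degree-$j$ term of $C_\Higgs(E,\theta)$ from \cref{def.definition of dolbeault cohomology} is $C^j=\pi_X^*E_0\otimes\Omega^j_h(-j)$, i.e.\ a twist of $\Omega^j_h$ by a bundle pulled back from the small étale site. The essential subtlety to keep in mind is that this complex is only coconnective: since $\Omega^j_h$ is nonzero in every degree (and is not even coherent on the big site, see \cref{warning. big differential and relative differential}), $C_\Higgs(E,\theta)$ is unbounded above, and the whole content of the lemma is that $R\pi_{X,*}$ truncates it back to a bounded range.

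The key computation is the termwise pushforward. Since $E_0$ is a vector bundle, hence dualizable, the projection formula applies to the (non-coherent) coefficient sheaf $\Omega^j_h$ and yields $R\pi_{X,*}\bigl(\pi_X^*E_0\otimes\Omega^j_h(-j)\bigr)\simeq E_0\otimes R\pi_{X,*}\Omega^j_h(-j)$. By \cref{cor.cohomological dimension bound of h-differentials}, the cohomology sheaves $R^i\pi_{X,*}\Omega^j_h$ are coherent and vanish unless $0\le i\le n$ and $0\le j\le n$; in particular $R\pi_{X,*}\Omega^j_h=0$ for $j>n$, which is exactly what annihilates the high-degree terms of the Higgs complex. (For smooth $X$ the same corollary in fact gives $R\pi_{X,*}\Omega^j_h=\Omega^j_X$ concentrated in degree $0$, which would sharpen the conclusion to the range $[0,n]$; I only need the coarser vanishing to obtain $[0,2n]$.) I would then assemble these via the spectral sequence of the stupid filtration $\sigma_{\ge p}C_\Higgs(E,\theta)$, namely $E_1^{p,q}=R^q\pi_{X,*}C^p=E_0\otimes R^q\pi_{X,*}\Omega^p_h(-p)\ \Rightarrow\ R^{p+q}\pi_{X,*}C_\Higgs(E,\theta)$. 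Every $E_1^{p,q}$ is then a coherent $\mathcal{O}_X$-module (the Tate twist is tensoring with a one-dimensional $C$-space and does not affect coherence over $C$) and vanishes outside $0\le p\le n$, $0\le q\le n$. Thus only finitely many terms are nonzero, the abutment vanishes outside total degree $[0,2n]$, and each cohomology sheaf is coherent since coherent sheaves form a Serre subcategory; this gives $R\pi_{X,*}C_\Higgs(E,\theta)\in D^b_{\mathrm{coh}}(X_\et)$ with cohomology concentrated in $[0,2n]$.

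The main obstacle is the convergence of this spectral sequence for the unbounded coconnective complex, together with the legitimacy of the termwise pushforward. Both are resolved by the vanishing $R\pi_{X,*}\Omega^j_h=0$ for $j>n$ coming from \cref{cor.cohomological dimension bound of h-differentials}, which confines the nonzero $E_1$-terms to a finite box (so the spectral sequence is really a finite one despite the source complex being unbounded), and by the projection formula applied to the dualizable bundle $E_0$, so that the non-coherence of $\Omega^j_h$ on the big site noted in \cref{warning. big differential and relative differential} causes no difficulty.
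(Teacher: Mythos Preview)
Your proof is correct and follows essentially the same approach as the paper: both run the hypercohomology (stupid-filtration) spectral sequence and control each term via \cref{cor.cohomological dimension bound of h-differentials}. Your write-up is somewhat more detailed---you make explicit the descent $E\simeq\pi_X^*E_0$, the projection formula, and the convergence issue for the unbounded complex---whereas the paper states the spectral sequence and the vanishing range in two sentences; but the argument is the same.
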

\begin{proof}
    The spectral sequence for hypercohomology says that $$E_1^{p,q}=R^p\pi_{X,*}C^q_\Higgs(E,\theta) \Rightarrow R^{p+q}\pi_{X,*}C_\Higgs(E,\theta).$$
    Since $C^q_\Higgs(E,\theta)=E\otimes \Omega^q_\eh(-q)$ by \cite[Corollary 6.0.4]{guo2019hodge}, $E_1^{p,q}=0$ unless $0\le p,q\le n$. Hence $R\pi_{X,*}C_\Higgs(E,\theta)$ lies in cohomological dimension $[0,2n]$ as desired. Note that by the proof of \cite[Proposition 6.0.1]{guo2019hodge}, all the higher direct images of $\Omega^i_\eh$ are coherent. The assertion for coherent sheaves follows immediately from the fact that $E_1^{p,q}$ are all coherent and therefore all terms in $R\pi_{X,*}C_\Higgs(E,\theta)$. 
\end{proof}
\begin{corollary}
\label{cor.rough bound of h-cohomology}
    Let $X$ be a smooth rigid-analytic variety and $(E,\theta)$ be a Higgs bundle on $X_\eh$. Then the Dolbeault cohommology $R\Gamma_h(X,(E,\theta))$ of $(E,\theta)$ lives in cohomological dimension $[0,3n]$.
\end{corollary}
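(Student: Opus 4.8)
The plan is to compute the Dolbeault cohomology by pushing the Higgs complex forward along $\pi_X\colon X_h\to X_\et$ and then combining the bound of \cref{lem.derived pushforward of Higgs complex} with the coherent cohomological dimension of $X$. First I would invoke the composition-of-functors (Leray) identity for the morphism of topoi $\pi_X$, which gives
\[
R\Gamma_h(X,(E,\theta))=R\Gamma_h(X,C_\Higgs(E,\theta))\simeq R\Gamma_\et\bigl(X,\,R\pi_{X,*}C_\Higgs(E,\theta)\bigr),
\]
valid since $R\Gamma_h(X,-)\simeq R\Gamma_\et(X,-)\circ R\pi_{X,*}$ as functors on the derived category of abelian $h$-sheaves. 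Thus it suffices to bound the cohomological amplitude of the outer $R\Gamma_\et$ applied to the complex $\mathcal{C}\coloneqq R\pi_{X,*}C_\Higgs(E,\theta)$.

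By \cref{lem.derived pushforward of Higgs complex}, the complex $\mathcal{C}$ lies in $D^b_\mathrm{coh}(X_\et)$ with cohomology sheaves concentrated in degrees $[0,2n]$; that is, $\mathcal{H}^q(\mathcal{C})=0$ unless $0\le q\le 2n$, and each $\mathcal{H}^q(\mathcal{C})$ is coherent. The remaining input is the coherent cohomological dimension of $X$: since $X$ is a smooth rigid space of dimension $n$, the étale cohomology of a coherent sheaf agrees with its analytic cohomology (cf.\ \cref{lem.vector bundle=fppf vector bundle}), and the latter vanishes in degrees $>n$ on any rigid space of dimension $n$. Hence $H^p_\et(X,\mathcal{G})=0$ for all $p>n$ and every coherent $\mathcal{G}$.

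Feeding these two facts into the hypercohomology spectral sequence
\[
E_2^{p,q}=H^p_\et\bigl(X,\mathcal{H}^q(\mathcal{C})\bigr)\;\Longrightarrow\;H^{p+q}_h(X,C_\Higgs(E,\theta)),
\]
the $E_2$-page is supported in the rectangle $0\le p\le n$, $0\le q\le 2n$. Therefore every abutment term $H^{p+q}_h$ with $p+q<0$ or $p+q>3n$ vanishes, which is exactly the assertion that $R\Gamma_h(X,(E,\theta))$ lives in cohomological degrees $[0,3n]$.

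The one point requiring care is the coherent cohomological dimension bound $H^{>n}_\et(X,\mathcal{G})=0$: when $X$ is quasicompact this is immediate from Kiehl's finiteness together with the dimension-$n$ vanishing of coherent cohomology, but for a general (non-quasicompact) smooth $X$ one must confirm that the analytic-to-étale comparison and the degree-$n$ vanishing still hold globally. I expect this standard-but-delicate vanishing to be the main obstacle; once it is in hand, the spectral-sequence bookkeeping above finishes the argument.
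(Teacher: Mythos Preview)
Your argument is essentially identical to the paper's: both push the Higgs complex forward along $\pi_X$, invoke \cref{lem.derived pushforward of Higgs complex} to bound $R\pi_{X,*}C_\Higgs(E,\theta)$ in $D^{[0,2n]}_{\mathrm{coh}}(X_\et)$, and then run the Leray/hypercohomology spectral sequence together with the coherent cohomological dimension bound $H^{>n}_\et(X,\mathcal{G})=0$ to conclude. The paper phrases the vanishing input as ``rigid analytic GAGA'' without further comment, so your caveat about the non-quasicompact case is a point the paper also glosses over rather than a divergence in method.
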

\begin{proof}
    By Leray spectral sequence we have $$E^{p,q}_2=H^p_\et(X,R^q\pi_{X,*}C_\Higgs(E,\theta))\Rightarrow H^{p+q}_\eh(X,(E,\theta)).$$
    On the other hand, by \cref{cor.rough bound of h-cohomology}, $R^q\pi_{X,*}C_\Higgs(E,\theta)$ is a coherent sheaf and therefore by rigid-analytic GAGA, $E^{p,q}_2=0$ unless $0\le p\le n$. Combine with the previous Lemma we win.
\end{proof}

\begin{remark}
    Of course the bound in \cref{cor.rough bound of h-cohomology} is not sharp. Later in the paper, once we establish the comparison between pro-\'etale cohomology and Dolbeault cohomology, we will know that it has the correct cohomological dimension bound.
\end{remark}

Using hypercovers, one immediately deduces the following.

\begin{corollary}
\label{cor. rough dolbeault cohomological dimension bounds}
    Let $X$ be a rigid-analytic variety and $(E,\theta)$ be a Higgs bundle on $X_\eh$. Then we have $R\pi_{X,*}C_\Higgs(E,\theta)$ lives in cohomological dimension $[0,2n]$ in $D^b_\mathrm{coh}(X_\et)$. In particular, the Dolbeault cohommology $R\Gamma_\eh(X,(E,\theta))$ of $(E,\theta)$ lives in cohomological dimension $[0,3n]$
\end{corollary}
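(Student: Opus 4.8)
The plan is to bootstrap from the smooth case (\cref{lem.derived pushforward of Higgs complex} and \cref{cor.rough bound of h-cohomology}) by descending along a smooth proper $h$-hypercover, as the phrase ``using hypercovers'' suggests. Write $n=\dim X$. First I would dispose of the ``in particular'' clause exactly as in the smooth case: granting the main assertion that $R\pi_{X,*}C_\Higgs(E,\theta)$ is a coherent complex on $X_\et$ concentrated in degrees $[0,2n]$, the Leray spectral sequence $E_2^{p,q}=H^p_\et(X,R^q\pi_{X,*}C_\Higgs(E,\theta))\Rightarrow H^{p+q}_h(X,(E,\theta))$, combined with rigid-analytic GAGA so that $H^p_\et(X,-)$ of a coherent sheaf vanishes for $p>n$, forces $R\Gamma_h(X,(E,\theta))$ into $[0,3n]$. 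So everything reduces to the sheaf-level statement.

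For the sheaf-level statement I would run the Higgs spectral sequence $E_1^{p,q}=R^p\pi_{X,*}\big(E\otimes\Omega^q_h(-q)\big)\Rightarrow R^{p+q}\pi_{X,*}C_\Higgs(E,\theta)$ and bound the $E_1$-page. The $q$-direction is immediate: by \cref{cor.cohomological dimension bound of h-differentials} (which already holds for general, possibly singular $X$) one has $R\pi_{X',*}\Omega^q_h=0$ for $q>\dim X'$ on any smooth $X'$, and pulling back to a smooth hypercover where $E$ becomes a pullback (\cref{thm.descent of vector bundles}) the projection formula makes each term literally zero, so descent gives $E_1^{p,q}=0$ for $q>n$. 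The remaining content is the $p$-direction together with coherence, and here I would fix a smooth proper $h$-hypercover $a_\bullet\colon X_\bullet\to X$ with each $X_m$ smooth, obtained by iterating the smooth proper $\eh$-coverings of \cite[Corollary 2.4.8]{guo2019hodge} (as in the proof of \cref{thm. h-topology factorization theorem}) and arranged, via a cubical hyperresolution, so that $\dim X_m\le n-m$ and the hypercover is finite. Since the terms $E\otimes\Omega^q_h(-q)$ are $h$-sheaves and $X_h$ has enough points (\cref{lem.h-topology has enough points}), $h$-cohomological descent gives
\begin{equation*}
R\pi_{X,*}C_\Higgs(E,\theta)\;\simeq\;\mathrm{Tot}\Big(\,[m]\mapsto Ra_{m,\et,*}\,R\pi_{X_m,*}\,C_\Higgs(E|_{X_m},\theta|_{X_m})\,\Big).
\end{equation*}

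On each smooth $X_m$ the bundle $E|_{X_m}$ is pulled back from $(X_m)_\et$ by \cref{thm.descent of vector bundles}, so \cref{lem.derived pushforward of Higgs complex} applies verbatim and shows $R\pi_{X_m,*}C_\Higgs(E|_{X_m},\theta|_{X_m})$ is coherent and concentrated in $[0,2\dim X_m]$. The proper pushforward $Ra_{m,\et,*}$ then preserves coherence and boundedness by Kiehl's finiteness theorem, and the totalization of a finite cosimplicial object of such complexes is again coherent and bounded; this already yields the clean part of the statement, namely $R\pi_{X,*}C_\Higgs(E,\theta)\in D^b_{\mathrm{coh}}(X_\et)$. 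The delicate point, which I expect to be the main obstacle, is the \emph{degree bookkeeping} needed to land in the advertised range $[0,2n]$ rather than merely some finite range: one must weigh the internal amplitude $2\dim X_m$, the fibre-dimension shift introduced by the proper map $a_m$, and the cosimplicial shift at level $m$ against the dimension drop $\dim X_m\le n-m$, since a naive fibre-dimension estimate overshoots. As already noted after \cref{cor.rough bound of h-cohomology}, these constants are deliberately rough and will be sharpened once the comparison with pro-\'etale cohomology is in place, so the essential output here is coherence together with a bounded cohomological range, with the precise constants following from the hyperresolution's decreasing dimensions.
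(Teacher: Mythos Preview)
Your coherence argument via a smooth $h$-hypercover and the simplicial spectral sequence is exactly what the paper does, and your deduction of the $[0,3n]$ bound from the $[0,2n]$ bound via Leray and rigid-analytic GAGA is also the intended route. The divergence is in how you get the sharp range $[0,2n]$ for $R\pi_{X,*}C_\Higgs(E,\theta)$.

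For the bound, the paper does \emph{not} push the hypercover bookkeeping through. Instead it runs the same induction on $\dim X$ used in \cref{prop.cohomological dimension bounds of proet VB}: pass to $X_{\mathrm{red}}$ (harmless by \cref{prop.h-sheaf=fppf sheaf + abstract blowups}(3)), take a resolution of singularities $Y=\Bl_Z(X)\to X$ with $Y$ smooth, $Z$ nowhere dense, exceptional divisor $E$, and use the abstract blowup fibre sequence from \cref{prop.h-sheaf=fppf sheaf + abstract blowups}(2). On $Y$ the smooth case (\cref{lem.derived pushforward of Higgs complex}) gives the range $[0,2n]$; on $Z$ and $E$ the inductive hypothesis gives $[0,2(n-1)]$; the fibre sequence then forces $X$ into $[0,2n]$. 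This sidesteps all the amplitude juggling you flag as the ``delicate point''.

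Your route has a genuine gap beyond the bookkeeping: you invoke a cubical hyperresolution with $\dim X_m\le n-m$, but no such construction is established in the rigid setting in this paper (the $\eh$-hypercovers of \cite{guo2019hodge} used here give smooth $X_m$ of dimension $\le n$, not $\le n-m$). Even granting such a hyperresolution, you correctly note that a naive fibre-dimension estimate for $Ra_{m,\et,*}$ overshoots, so the argument as written does not pin down $[0,2n]$. The abstract-blowup induction is both shorter and avoids importing the Guill\'en--Navarro Aznar machinery.
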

\begin{proof}
    Take a smooth $\eh$-hypercover $X_\bullet$ of $X$. Then the coherence directly follows from the spectral sequence in \cref{prop.simplicial space spectral sequence}. As for the cohomological dimension bounds, one uses the blowup excision as in \cref{prop.h-sheaf=fppf sheaf + abstract blowups} and proceeds the same proof as in \cref{prop.cohomological dimension bounds of proet VB}. Since we will get the correct cohomological dimension bounds there, we do not spell out the proof here.
\end{proof}

\newpage
\section{Simplicial rigid-analytic varieties and $p$-adic Simpson correspondence}
To generalize the $p$-adic Simpson correspondence from smooth rigid-analytic varieties to singular ones, we need to take a smooth hypercover of the latter. We spell out some technical details of simplicial rigid-analytic varieties in this section.

\subsection{Generalities on simplicial rigid-analytic varieties}
We recall and build some basic notions on simplicial rigid-analytic varieties in this section that will be used later.

\begin{definition}
    A simplicial rigid-analytic variety $X_\bullet$ is a functor $X_\bullet:\Delta^{\rm op}\to \Rig$ where $\Delta$ is the category of linearly ordered finite sets. In practice, we image $X_\bullet$ as a sequence of rigid-analytic varieties 
    $$\cdots   \substack{\longrightarrow\\[-1em] \longleftarrow \\[-1em] \longrightarrow\\[-1em] \longleftarrow \\[-1em] \longrightarrow} X_1 \substack{\longrightarrow\\[-1em] \longleftarrow \\[-1em] \longrightarrow}   X_0 $$
    with face maps $d^j_n: X_n\to X_{n-1}$ and degeneracy maps $s^j_n: X_n\to X_{n+1}$ for $0\le j\le n$ satisfies certain relations given by $\Delta$, c.f. \cite[016B]{stacks-project}. A morphism between simplicial rigid-analytic varieties is a natural transformation. We will denote the category of simplicial rigid-analytic varieties as $\Rig_{\Delta}$.

    Moreover, we say a morphism of simplicial rigid-analytic variety $X_\bullet\to Y_\bullet$ satisfies property \textbf{P} if $X_n\to Y_n$ satisfies property \textbf{P} for each $n$ where \textbf{P}=\'etale, smooth, flat, proper, etc.
\end{definition}

\begin{remark}
    In the rest of this section, one can freely replace $\Delta$ by its variants: the augmented finite ordered sets $\Delta^+$, the $m$-truncated finite ordered sets $\Delta_{\le m}$ and the augmented $m$-truncated finite ordered sets $\Delta^+_{\le m}$. Hence one can get different kinds of simplicial spaces. It is not very hard to connect different variants by slice, skeleton and coskeleton functor. These operations are well-known, so we refer the reader to \cite[09VI]{stacks-project} and \cite{conrad2003cohomological} for more details. For practice, we will mainly use the augmented truncated version. And when talking about an $n$-truncated (augmented) simplicial object, we always mean its $n$-coskeleton. 
\end{remark}


\begin{definition}\label{def. simplicial sheaves in various topologies}
 For a simplicial rigid-analytic variety $X_\bullet$ and $\tau\in \{\an, \et, \proet,v,\eh\} $, we define $X_{\bullet,\tau}$ as the simplicial $\tau$-site of $X_\bullet$ (when $\tau=v$, replace $X_\bullet$ by $X^\diamond_\bullet$) as follows:
 \begin{enumerate}
     \item An object of $X_{\bullet,\tau}$ is $U_n\to X_n\in X_{n,\tau}$ for some $n$.
     \item A morphism from $(U_n\to X_n)$ to $(U_m\to X_m)$ is given by a commutative diagram
     \[
     \begin{tikzcd}
         U_n \arrow[r,"f"] \arrow[d] & U_m \arrow[d]\\
         X_n \arrow[r, "X_\bullet(\phi)"] & X_m 
     \end{tikzcd}
     \]
     where $f$ is a morphism in suitable category and $\phi:[m]\to [n]$ is morphism in $\Delta$.
     \item A cover of $(U_{n}\to X_n)$ is just cover of $U_n$ in $X_{n,\tau}$.
 \end{enumerate}
We define a $\tau$-sheaf on $X_\bullet$ to be a sheaf on $X_{\bullet,\tau}$. We denote the $\tau$-sheaves of abelain groups as $\Ab(X_{\bullet,\tau})$. We drop $\tau$ if there is no confusion. As in the nonsimplicial case, we denote $\nu:X_{\bullet,\proet}\to X_{\bullet,\et}$, $\lambda:X_{\bullet,\proet}\to X_{\bullet,\an}$ as the projections.

\end{definition}

\begin{remark}
    Note that it is not true in general that for any \'etale cover $\{U\to X_n\}$, we have an \'etale morphism of simplicial rigid-analytic variety $U_\bullet\to X_\bullet$ whose restriction to degree $n$ is $U$. Nevertheless, one can always dominate $U\to X_n$ by the restriction to degree $n$ of some \'etale surjective map $U_\bullet\to X_\bullet$ using the coskeleton functor, c.f. \cite[Proposition 1.5]{friedlander1982etale}.
\end{remark}

Sheaves on a simplicial site is "cosimplicial" in the following sense.

\begin{lemma}[{\cite[09VM]{stacks-project}}]
\label{lem.explicitsimplicialsheaf}
    Let $X_\bullet$ be a simplicial rigid-analytic variety. Then $F^\bullet$ is a sheaf of abelian groups on $X_{\bullet,\tau}$ if and only if it consists of the following datum:
    \begin{enumerate}
        \item A system of sheaves $F^n$ on $X_{n,\tau}$.
        \item For any $\phi:[m]\to [n]$ in $\Delta$, there is a morphism $[\phi]:X_\bullet(\phi)^{-1}(F^m)\to F^n$.
        \item For any $\phi:[m]\to [n]$ and $\psi:[k]\to [m]$, we have $[\phi]\circ X_\bullet(\phi)^{-1}[\psi]=[\phi\circ\psi]$.
    \end{enumerate}
    We say $F^\bullet$ is cartesian if $[\phi]:X_\bullet(\phi)^{-1}(F^m)\to F^n$ is an isomorphism for all $\phi:[m]\to [n]$ in $\Delta$. 
\end{lemma}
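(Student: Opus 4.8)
The plan is to unwind \cref{def. simplicial sheaves in various topologies} and check that the sheaf condition on $X_{\bullet,\tau}$ splits into a degreewise sheaf condition together with compatible transition data. The key observation is that every morphism $(U_n\to X_n)\to (U_m\to X_m)$ in $X_{\bullet,\tau}$, which by definition is a commutative square over some $X_\bullet(\phi)\colon X_n\to X_m$ with $\phi\colon[m]\to[n]$, factors canonically as
\[
(U_n\to X_n)\longrightarrow \bigl(X_\bullet(\phi)^{-1}(U_m)\to X_n\bigr)\xrightarrow{\ c_\phi\ } (U_m\to X_m),
\]
where the first arrow is a morphism over $\mathrm{id}_{X_n}$ (produced by the universal property of the fibre product $X_\bullet(\phi)^{-1}(U_m)=U_m\times_{X_m}X_n$) and hence lives in the fixed-degree subcategory $X_{n,\tau}$, while $c_\phi$ is the canonical structural morphism attached to $\phi$. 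Thus a presheaf on $X_{\bullet,\tau}$ is completely determined by its restrictions to the $X_{n,\tau}$ together with the maps it assigns to the structural morphisms $c_\phi$.

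First I would treat the forward direction. Given a sheaf $F^\bullet$ on $X_{\bullet,\tau}$, I set $F^n$ to be its restriction along the inclusion $X_{n,\tau}\hookrightarrow X_{\bullet,\tau}$. Since a covering of $(U_n\to X_n)$ in $X_{\bullet,\tau}$ is by definition just a covering of $U_n$ in $X_{n,\tau}$, the sheaf axiom for $F^\bullet$ in degree $n$ is exactly the sheaf axiom for $F^n$, giving $(1)$. For $(2)$, applying $F^\bullet$ to the morphisms $c_\phi$ as $U_m$ varies produces maps $F^m(U_m)\to F^n\bigl(X_\bullet(\phi)^{-1}(U_m)\bigr)$ natural in $U_m$; by the universal property of the inverse image functor $X_\bullet(\phi)^{-1}$ (its adjunction with pushforward) these assemble into a single morphism of sheaves $[\phi]\colon X_\bullet(\phi)^{-1}(F^m)\to F^n$. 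The cocycle identity $(3)$ then follows from the functoriality of $F^\bullet$: the structural morphisms compose as $c_\psi\circ c_\phi=c_{\phi\circ\psi}$, reflecting $X_\bullet(\phi\circ\psi)=X_\bullet(\psi)\circ X_\bullet(\phi)$ and the canonical identification $X_\bullet(\phi)^{-1}X_\bullet(\psi)^{-1}\simeq X_\bullet(\phi\circ\psi)^{-1}$.

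Conversely, given data $(F^n,[\phi])$ satisfying $(1)$--$(3)$, I would reconstruct $F^\bullet$ by declaring $F^\bullet(U_n\to X_n)$ to be $F^n(U_n)$ and defining the restriction along a morphism over $\phi$ through the canonical factorization above: the structural part is the map induced by $[\phi]$ (via the adjunction unit $F^m(U_m)\to (X_\bullet(\phi)^{-1}F^m)(X_\bullet(\phi)^{-1}U_m)$ followed by $[\phi]$), and the fixed-degree part uses the ordinary restriction maps of the sheaf $F^n$. Condition $(3)$ is precisely what makes these assignments respect composition, so $F^\bullet$ is a presheaf; and since coverings are degreewise and each $F^n$ is a sheaf, $F^\bullet$ satisfies the sheaf axiom. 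A final check that the two constructions are mutually inverse is immediate from the factorization, yielding the claimed equivalence.

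The only real difficulty is bookkeeping: because the transition data $[\phi]$ are morphisms of sheaves rather than of individual sections, one must pass carefully between the ``section-level'' maps $F^m(U_m)\to F^n(X_\bullet(\phi)^{-1}(U_m))$ and the ``sheaf-level'' maps $[\phi]$ using the inverse-image adjunction, and verify that these identifications are compatible with composition of the $c_\phi$. Since this is the standard description of sheaves on a simplicial site recorded in \cite[09VM]{stacks-project}, no new geometric input is needed.
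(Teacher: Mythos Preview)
Your proposal is correct and follows the standard argument recorded in \cite[Tag 09VM]{stacks-project}. Note that the paper does not give its own proof of this lemma but simply cites the Stacks Project; your write-up is essentially a careful unwinding of that reference, so there is nothing to compare.
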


\begin{remark}\label{rem.restriction is exact}
    It is not hard to see that the restriction functor $\Ab(X_\bullet)\to \Ab(X_n)$ is exact for every $n$, c.f. \cite[Lemma 09WG]{stacks-project}.
\end{remark}

We will use the following spectral sequence multiple times.

\begin{proposition}[{\cite[09WJ]{stacks-project}}]
\label{prop.simplicial space spectral sequence}
    There is a functorial spectral sequence in $K^\bullet\in D^+(X_{\bullet,\tau})$:
    $$E_1^{p,q}=H^q_\tau(X_p,K^p )\Rightarrow H_\tau^{p+q}(X_\bullet,K^\bullet)$$
    where we view $K^\bullet$ as a sequence of complexes by Lemma \ref{lem.explicitsimplicialsheaf}. 
\end{proposition}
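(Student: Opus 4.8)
The plan is to realize $R\Gamma_\tau(X_\bullet,K^\bullet)$ as the totalization of the cosimplicial object $[p]\mapsto R\Gamma_\tau(X_p,K^p)$ in $D^+(\Ab)$, after which the asserted spectral sequence is simply the first spectral sequence of the associated double complex, filtered by the cosimplicial degree $p$. Concretely, I would first reduce to the case where $K^\bullet$ is a single abelian sheaf $\mathcal{F}$ on $X_{\bullet,\tau}$, the general case following by replacing an injective resolution with a Cartan--Eilenberg resolution and passing to total complexes (this also yields functoriality in $K^\bullet$ for free). Choose an injective resolution $\mathcal{F}\to \mathcal{I}^\bullet$ in $\Ab(X_{\bullet,\tau})$, so that $R\Gamma_\tau(X_\bullet,\mathcal{F})=\Gamma(X_\bullet,\mathcal{I}^\bullet)$. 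Writing $\mathcal{I}^q_p$ for the restriction of $\mathcal{I}^q$ to $X_{p,\tau}$ in the sense of \cref{lem.explicitsimplicialsheaf}, the groups of global sections assemble, via the coface maps coming from the cosimplicial structure in $p$, into a first-quadrant double complex $C^{p,q}=\Gamma(X_p,\mathcal{I}^q_p)$, whose horizontal differential is the alternating sum of coface maps and whose vertical differential is induced by the resolution.

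The two filtration spectral sequences of $\mathrm{Tot}(C^{\bullet,\bullet})$ then do all the work, and each rests on one key input. Taking vertical cohomology first: since restriction $\Ab(X_{\bullet,\tau})\to \Ab(X_{p,\tau})$ is exact (\cref{rem.restriction is exact}) and admits an exact left adjoint $g_{p!}$ (extension, computed as a direct sum of pushforwards along the maps out of $[p]$), it carries injectives to injectives; hence $\mathcal{F}_p\to \mathcal{I}^\bullet_p$ is an injective resolution on $X_{p,\tau}$ and the vertical cohomology of $C^{p,\bullet}$ is exactly $H^q_\tau(X_p,\mathcal{F}_p)$. This produces the page $E_1^{p,q}=H^q_\tau(X_p,K^p)$ with the expected horizontal differential. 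Taking horizontal cohomology first: here I must show that for each fixed $q$ the complex $C^{\bullet,q}$ is concentrated in degree $0$, where it equals the equalizer $\Gamma(X_\bullet,\mathcal{I}^q)$; granting this, the second spectral sequence degenerates and identifies $\mathrm{Tot}(C^{\bullet,\bullet})$ with $\Gamma(X_\bullet,\mathcal{I}^\bullet)=R\Gamma_\tau(X_\bullet,\mathcal{F})$, which is the claimed abutment.

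The main obstacle is precisely this horizontal acyclicity, and I would establish it by an adjunction-plus-injectivity argument. By adjunction $\Gamma(X_p,\mathcal{I}^q_p)=\Hom_{X_\bullet}(g_{p!}\mathbb{Z}_{X_p},\mathcal{I}^q)$, so $C^{\bullet,q}$ is the complex associated to $\Hom_{X_\bullet}(L_\bullet,\mathcal{I}^q)$, where $L_\bullet$ is the simplicial abelian sheaf $[p]\mapsto g_{p!}\mathbb{Z}_{X_p}$ on $X_{\bullet,\tau}$. Since $\mathcal{I}^q$ is injective, $\Hom_{X_\bullet}(-,\mathcal{I}^q)$ is exact, so it suffices to check that the chain complex associated to $L_\bullet$ is a resolution of the terminal sheaf $\mathbb{Z}_{X_\bullet}$, i.e. that $L_\bullet$ has homology concentrated in degree $0$. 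That $H_0(L_\bullet)=\mathbb{Z}_{X_\bullet}$ is the sheaf condition on the simplicial site (\cref{lem.explicitsimplicialsheaf}), since $\Hom_{X_\bullet}(\mathbb{Z}_{X_\bullet},\mathcal{F})=\Gamma(X_\bullet,\mathcal{F})=\mathrm{Eq}(\Gamma(X_0,\mathcal{F}_0)\rightrightarrows \Gamma(X_1,\mathcal{F}_1))$; the vanishing of $H_s(L_\bullet)$ for $s>0$ is the formal statement that the representables built from the individual levels assemble into a resolution of the constant sheaf, which I would verify stalkwise (these sites have enough points). With both spectral sequences in hand the proposition follows.
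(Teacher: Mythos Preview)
The paper does not supply its own proof of this proposition: it simply records the statement with a citation to \cite[09WJ]{stacks-project}. Your outline is essentially the argument given there---form the double complex $C^{p,q}=\Gamma(X_p,\mathcal{I}^q_p)$ from an injective resolution on the total site, use that restriction to each level preserves injectives (via the exact left adjoint $g_{p!}$) to identify the $E_1$-page, and then show the horizontal complex is a resolution of $\Gamma(X_\bullet,\mathcal{I}^q)$ by exhibiting $\cdots\to g_{1!}\mathbb{Z}\to g_{0!}\mathbb{Z}\to\mathbb{Z}\to 0$ as exact. So your approach matches the cited reference.

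One small caution: you propose to verify the acyclicity of $L_\bullet\to\mathbb{Z}$ ``stalkwise (these sites have enough points)''. That is fine for $\tau\in\{\an,\et,h\}$, but it is not obvious (and not asserted in the paper) that the pro-\'etale or $v$-sites in play have enough points in the relevant sense. The cleaner route, which is what \cite[09WI]{stacks-project} actually does, is to avoid points entirely: restrict the complex of sheaves to each $X_{n,\tau}$, where $(g_{p!}\mathbb{Z})|_{X_n}$ becomes a direct sum of copies of $\mathbb{Z}$ indexed by $\Hom_\Delta([p],[n])$, and the resulting chain complex is the (normalized) chain complex of the simplicial set $\Delta^n$, which is contractible. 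This is a purely combinatorial check and works uniformly for all $\tau$.
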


\begin{remark}
\label{rem.nonabelian simplicial space spectral sequence}
    We also need a nonabelian analogue of the proposition above. Following the proof of {\cite[09WJ]{stacks-project}}, one can prove a nonabelian analogue in the following sense: let $K$ be a sheaf of $n$-truncated spaces on $X_{\bullet,\tau}$, then there is a functorial homotopy spectral sequence
    $$\pi_{-q}(K(X_p)) \Rightarrow \pi_{-p-q}(K(X_\bullet)).$$
\end{remark}

\begin{definition}
\label{def.simplicialpcohsheaf}
    There is a natural sheaf of rings $\mathcal{O}_{X_\bullet,\tau}$ on $X_{\bullet,\tau}$ which restriction on each $X_n$ is just $\mathcal{O}_{X_n,\tau}$. It gives a ringed topos $(X_{\bullet,\tau},\mathcal{O}_{X_\bullet,\tau})$. Then we define sheaf of $\mathcal{O}_{X_\bullet,\tau}$-module and (pseudo)coherent $\mathcal{O}_{X_\bullet,\tau}$-module by general nonsense.

    A sheaf of $\mathcal{O}_{X_\bullet,\tau}$-module is cartesian if $[\phi]:X_\bullet(\phi)^{*}(F^m)\to F^n$ is an isomorphism for all $\phi:[m]\to [n]$ in $\Delta$. 
\end{definition}

\begin{lemma}[{\cite[0D7M]{stacks-project}}]
\label{lem.pseudocoherent=pseudocoherent on each degree+cartesian}
    Let $F^\bullet$ be a sheaf of $\mathcal{O}_{X_\bullet}$-module. Then $F^\bullet$ is pseudocoherent if and only if it is cartesian and pseudocoherent for each $F^n$.
\end{lemma}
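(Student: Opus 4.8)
The plan is to prove the two implications separately, organized around the single observation that every finite free $\mathcal{O}_{X_\bullet}$-module $\mathcal{O}_{X_\bullet}^{\oplus r}$ is automatically cartesian, since the structure sheaf itself is cartesian by construction (\cref{def.simplicialpcohsheaf}). For the ``only if'' direction, suppose $F^\bullet$ is pseudocoherent. First, for each $n$ the restriction functor $F^\bullet\mapsto F^n$ is exact (\cref{rem.restriction is exact}), sends $\mathcal{O}_{X_\bullet}$ to $\mathcal{O}_{X_n}$ and finite free modules to finite free modules; since by \cref{def. simplicial sheaves in various topologies} the coverings of an object $(U_n\to X_n)$ in the total site are exactly the coverings of $U_n$ in $X_{n,\tau}$, a local finite free resolution of $F^\bullet$ restricts to a local finite free resolution of $F^n$, so each $F^n$ is pseudocoherent. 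To see that $F^\bullet$ is cartesian, use that locally $F^\bullet$ admits a resolution $\mathcal{E}^\bullet\to F^\bullet$ by a bounded-above complex of finite free (hence flat) modules; such $\mathcal{E}^\bullet$ therefore computes the derived pullbacks $LX_\bullet(\phi)^*F^m$, and comparing the two descriptions shows the transition maps are derived isomorphisms, so taking $H^0$ gives that $X_\bullet(\phi)^*F^m\to F^n$ is an isomorphism, i.e. $F^\bullet$ is cartesian.

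For the converse I would work locally and build a finite free resolution on the total site out of the given levelwise resolutions. The key structural fact is that any finite free resolution $\mathcal{E}^\bullet$ on $X_\bullet$ has cartesian terms, and its differentials and augmentation are maps of $\mathcal{O}_{X_\bullet}$-modules, so by compatibility with the transition maps (using \cref{lem.explicitsimplicialsheaf}) the entire augmented complex on level $n$ is forced to be the pullback along $X_\bullet(\phi)$ of its restriction to the bottom level. Thus the construction reduces to: choose a local finite free resolution of $F$ at the bottom level and propagate it through the cartesian structure, checking that the pullbacks remain exact on every level. Granting that exactness, the resulting complex is a local finite free resolution on the total site, which is exactly what pseudocoherence of $F^\bullet$ requires.

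The main obstacle is precisely this last exactness in the converse: because a resolution on the total simplicial site is necessarily built from cartesian finite free modules, assembling one from the levelwise data demands that pulling back a resolution along the face and degeneracy maps stay exact in all higher degrees. This is the interplay between the (underived) cartesian hypothesis and the vanishing of the relevant $\mathrm{Tor}$-sheaves along the transition maps, i.e. the upgrade of the cartesian condition to a derived-cartesian one; it is the genuine homological content of the statement. Rather than reproving this bookkeeping by hand, I would deduce the lemma by checking that our simplicial ringed site $(X_{\bullet,\tau},\mathcal{O}_{X_\bullet,\tau})$ meets the hypotheses of the general result \cite[Tag 0D7M]{stacks-project}, from which the equivalence follows.
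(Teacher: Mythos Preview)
Your proposal is correct and lands on exactly the same approach as the paper: the paper gives no proof whatsoever and simply records the statement with a citation to \cite[Tag 0D7M]{stacks-project}, which is precisely where you end up after your sketch. Your additional discussion of the two implications is reasonable (the cartesian part of the ``only if'' direction is cleaner if argued via cokernels of maps of finite free, hence cartesian, modules rather than via derived pullbacks), but none of it is needed to match the paper.
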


\begin{remark}
\label{rem. dual in simplicial O-modules}
    Let $X_\bullet$ be a simplicial rigid-analytic variety and $\tau\in \{\an,\et \}$. One needs to be careful when taking duals in the category of $\mathcal{O}_{X_\bullet,\tau}$-modules. Let $F^\bullet$ be a $\mathcal{O}_{X_\bullet,\tau}$-module, then the dual of $F^\bullet$ is defined by 
    $$\underline{\Hom}(F^\bullet,\mathcal{O}_{X_\bullet,\tau})(U_n)=\underline{\Hom}(F^\bullet|_{U_n},\mathcal{O}_{X_\bullet,\tau}|_{U_n})$$
    for some $U_n\in X_{n,\tau}$. However, the latter is not necessarily $\underline{\Hom}(F^n|_{U_n},\mathcal{O}_{U_n,\tau})$ since the restriction should be interpreted by localizing the big site $X_{\bullet,\tau}$ at the object $U_n$ instead of restricting to the small site $U_{n,\tau}$. 
\end{remark}

\begin{remark}
    When $\tau\in \{\proet,v \}$, we usually use the complete structure sheaf $\widehat{\mathcal{{O}}}_{X_\bullet,\tau}$ as defined in \cite{scholze2013adic}. In this spirit, we denote $\nu^*$ as the following functor
\[\nu^{\ast}:=\hat{\mathcal{O}}_{X_{\bullet, \proet}}\otimes_{\nu^{-1}{\mathcal{O}}_{X_{\bullet, \et}}}\nu^{-1}(-):\mathrm{Mod}(X_{\bullet, \et},\mathcal{O}_{X_{\bullet,\et}})\to \mathrm{Mod}(X_{\bullet, \proet},\hat{\mathcal{O}}_{X_{\bullet,\proet}}).\]
Similarly to the non-simplicial case, one can also define (proétale) period sheaves ($\mathbb{A}^\bullet_\inf$, $\bdr$, etc) as in \cite{scholze2013adic}.
\end{remark}

\begin{example}
\label{eg.simplicial cotangent sheaf is not VB or dualizable}
    For any smooth simplicial rigid-analytic variety $X_\bullet$. We define the simplicial differential $\Omega^1_{X_\bullet}$ of $X_{\bullet}$ to be the sheaf of $\mathcal{O}_{X_\bullet,\et}$-module on $X_\bullet$ such that the $n$-th term on the $n$-th rigid-analytic variety $X_n$ is $\Omega^1_{X_n}$. As opposed to the non-simplicial case, $\Omega^1_{X_\bullet}$ is no longer a coherent sheaf anymore since it is not cartesian and thus does not satisfy Lemma \ref{lem.pseudocoherent=pseudocoherent on each degree+cartesian}.
\end{example}




\subsection{Simplicial Higgs bundles}
In the rest of this subsection, we prove some properties of Higgs bundles in this setting that will be used later. One needs to be slightly careful in the simplicial setting since $\Omega^1_{X_{\bullet}}(-1)$ is no longer coherent by Example \ref{eg.simplicial cotangent sheaf is not VB or dualizable}. Moreover due to the subtlety of taking duals as remarked in \cref{rem. dual in simplicial O-modules}, we do not use the linear dual of $\Omega^1_{X_{\bullet}}(-1)$ in the following. 

\begin{definition}\label{def.coherent B}
Let $X_\bullet$ be a simplicial rigid-analytic variety and $\tau\in \{\et,\eh,v \}$. A Higgs field on $X_{\bullet,\tau}$ is a vector bundle $E^\bullet$ on $X_{\bullet,\tau}$ with a morphism 
$$\theta: E^{\bullet}\to E^{\bullet}\otimes \Omega^1_{X_{\bullet,\tau}}(-1)$$
such that $\theta \wedge \theta =0$ and is $\mathcal{O}_{X_{\bullet,\tau}}$-linear (where in the case of $v$-site, $\Omega^1_{X_{\bullet,\tau}}(-1)$ simply means the pullback of $\Omega^1_{X_{\bullet,\et}}(-1)$). Again the groupoid of Higgs bundles on $X_{\bullet,\tau}$ is denoted as $\Higgs
(X_{\bullet,\tau})$. Similarly as in \cref{def.definition of dolbeault cohomology}, we define the Dolbeault cohomology of a Higgs bundle $(E^\bullet,\theta)$ on $X_{\bullet,\tau}$ as the hypercohomology of 
$$C_\Higgs(E^\bullet,\theta)\coloneq(E^\bullet\to E^\bullet\otimes \Omega^1_{X_{\bullet,\tau}}(-1)\to E^\bullet\otimes \Omega^2_{X_{\bullet,\tau}}(-2)\to \cdots).$$
For a similar reason as before, $C_\Higgs(E^\bullet,\theta)$ is only a connective complex even when $X_\bullet$ is smooth.

\end{definition}

\begin{remark}
    In the following, we also require that Higgs bundles are "cartesian" in the sense that the only failure to be cartesian is given by the sheaf of differentials. Specifically speaking, we require that for any $\phi:X_m\to X_n$, the Higgs field $\theta_m$ on $X_m$ is given by the composition of $\phi^*\theta_n$ and $\phi^*\Omega^1_{X_n}(-1) \to \Omega^1_{X_m}(-1)$, namely
    $$\theta_m:\phi^*E^n \to \phi^*E^n \otimes  \phi^* \Omega^1_{X_{n}}(-1) \to \phi^*E^n \otimes\Omega^1_{X_{m}}(-1).$$
\end{remark}

\begin{remark}
    We warn the reader here that one should not expect a generalization of the Hodge-Tate log exact sequence as in \cite[Theorem 2.4]{heuer2023p} since there is no good analogue of spectral variety in this context. Specifically speaking, let $X_\bullet$ be a smooth rigid-analytic variety and $(E^\bullet,\theta)$ be a Higgs bundle on $X_{\bullet,\et}$. Then it is not clear whether there is a simplicial rigid-analytic variety $S_\bullet $ that maps to $X_\bullet$ such that $S_n\to X_n$ corresponds to the spectral variety of $(E^n,\theta_n)$ for every $n$. Indeed, let $f:X\to Y$ be a map of smooth rigid-analytic varieties and $(E,\theta)$ be a Higgs field on $Y_\et$ and $f^*(E,\theta)$ be its pullback. Suppose $S_Y$ and $S_X$ be the corresponding spectral varieties, then we have the following diagram 
    \[
    \begin{tikzcd}
        S_X \ar[d,hookrightarrow] & S_Y/X \arrow[dr, phantom, "\ulcorner", very near start]\ar[r]\ar[l]\ar[d,hookrightarrow] & S_Y \ar[d,hookrightarrow]\\
        T^*X(-1) \ar[dr]& T^*Y(-1)/X \arrow[dr, phantom, "\ulcorner", very near start]\ar[d] \ar[l] \ar[r] & T^*Y(-1) \ar[d]\\
         & X \ar[r] & Y .
    \end{tikzcd}
    \]
    However, the arrow from $S_Y/X \to S_X$ is in the opposite direction to make a simplicial spectral variety.
\end{remark}

Finally we prove an analogous statement of \cref{prop. compare eh higgs with et higgs} for simplicial rigid-analytic varieties. We still denote the projection $X_{\bullet,\eh}\to X_{\bullet,\et}$ as $\pi_{X_\bullet,*}$.

\begin{proposition}
\label{prop.simplicial et Higgs and eh Higgs}
    Let $X_\bullet$ be a smooth simplicial rigid-analytic variety. Then the natural projection gives an equivalence of groupoids
    $$\pi_{X_\bullet,*}: \Higgs(X_{\bullet,h})\to \Higgs(X_{\bullet,\et}).$$
\end{proposition}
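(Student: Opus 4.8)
The plan is to reduce the statement to the non-simplicial equivalence of \cref{prop. compare eh higgs with et higgs} applied in each cosimplicial degree, and then to check that these degreewise equivalences are compatible with the simplicial structure maps. By \cref{lem.explicitsimplicialsheaf}, \cref{lem.pseudocoherent=pseudocoherent on each degree+cartesian} and the cartesian requirement imposed in the remark following \cref{def.coherent B}, a Higgs bundle on $X_{\bullet,\tau}$ (for $\tau\in\{\et,h\}$) unwinds into a system of Higgs bundles $(E^n,\theta_n)$ on $X_{n,\tau}$ together with, for each $\phi:[m]\to[n]$ in $\Delta$ with associated map $g=X_\bullet(\phi):X_n\to X_m$, a compatibility expressing $\theta_n$ as the composite of $g^*\theta_m$ with the natural map $g^*\Omega^1_{X_m}(-1)\to\Omega^1_{X_n}(-1)$. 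Thus $\pi_{X_\bullet,*}$ is induced degreewise by the functors $\pi_{X_n,*}$.

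First I would observe that since $X_\bullet$ is smooth, each $X_n$ is a smooth rigid space, so \cref{prop. compare eh higgs with et higgs} furnishes an equivalence $\pi_{X_n,*}:\Higgs(X_{n,h})\xrightarrow{\simeq}\Higgs(X_{n,\et})$ for every $n$. To promote these to an equivalence of the categories of cartesian systems, I would verify the naturality of $\pi_{X_n,*}$ in the base: for each transition map $g:X_n\to X_m$ as above, the pullback functors $g^*$ on the $h$- and \'etale sites intertwine $\pi_{X_m,*}$ and $\pi_{X_n,*}$ up to a coherent isomorphism. Granting this, the degreewise equivalences assemble into an equivalence of groupoids, with full faithfulness and essential surjectivity inherited termwise from \cref{prop. compare eh higgs with et higgs}.

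The naturality is where essentially all the content lies, so I would treat it carefully. The functor $\pi_{X,*}$ is built from two ingredients: the vector-bundle equivalence of \cref{thm.descent of vector bundles}, whose formation manifestly commutes with arbitrary pullback, and the differential comparison $\pi_{X,*}\Omega^1_h(-1)\simeq\Omega^1_X(-1)$ of \cref{cor. differential coincide on sm}, combined with the projection formula. The key point is that the absolute differential sheaf $\Omega^1_h=\pi^*\Omega^1_\et$ lives on the big site and its formation commutes with pullback along an arbitrary morphism $g$, the relevant transition map being exactly the natural (generally non-invertible) map $g^*\Omega^1_{X_m}\to\Omega^1_{X_n}$ recorded in \cref{warning. big differential and relative differential}. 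Since this is precisely the map used to impose the cartesian condition on the Higgs fields, the identification $\pi_{X_n,*}g^*\theta_m\simeq g^*\pi_{X_m,*}\theta_m$ followed by the transition map carries $g^*\theta_m$ to $\theta_n$, so no incompatibility arises across degrees.

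The hard part will be organizing the coherence of these identifications over all simplicial maps simultaneously, that is, checking that the projection-formula and vector-bundle comparisons are compatible with composition of transition maps, so that the collection of degreewise equivalences is genuinely a morphism of cartesian systems rather than merely a degreewise equivalence. Since each comparison isomorphism is natural in the base and the transition maps arise from a fixed simplicial object, this coherence follows formally from the functoriality already present in \cref{thm.descent of vector bundles} and \cref{cor. differential coincide on sm}; the only genuine subtlety, namely that the transition maps are not \'etale and so one must work with the big-site absolute differentials $\Omega^1_h$ rather than pullbacks of small-site coherent sheaves, has already been accounted for by the cartesian convention adopted after \cref{def.coherent B}.
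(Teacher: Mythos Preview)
Your proposal is correct and follows essentially the same strategy as the paper: reduce to the non-simplicial equivalence of \cref{prop. compare eh higgs with et higgs} in each degree, and use compatibility with the simplicial structure maps to assemble these into a global equivalence. The paper's proof is terser---it constructs the inverse ${\pi_{X_\bullet}^*}'$ directly (mimicking the non-simplicial construction) and then invokes the simplicial spectral sequence to check it is an inverse degreewise---whereas you unwind the cartesian description explicitly and verify naturality of the comparison under transition maps; but the content is the same.
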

\begin{proof}
    One construct the inverse functor ${\pi_{X_\bullet}^*}'$ similarly. And to check that ${\pi_{X_\bullet}^*}'$ is indeed an inverse, it suffices to check degreewise by \cref{prop.simplicial space spectral sequence} which follows from \cref{prop. compare eh higgs with et higgs}.
\end{proof}

Geometric Sen operator also makes sense in this context.

\begin{theorem}[c.f. {\cite[Theorem 4.8]{heuer2023p}}]\label{thm. canonical proetale Higgs field on simplicial}
    Let $X_\bullet$ be a smooth simplicial rigid-analytic variety. Then for any vector bundle $\mathcal{E}^\bullet$ on $X_{\bullet,v}^\diamond$, there exists a unique Higgs field 
    $$\eta_{\mathcal{E}^{\bullet}}: \mathcal{E}^{\bullet}\to \mathcal{E}^{\bullet}\otimes \Omega^1_{X_{\bullet}}(-1)$$
    on $X_{\bullet, v}^\diamond$ such that the following conditions hold:
    \begin{enumerate}
        \item The association $\mathcal{E}^\bullet \mapsto \eta_{\mathcal{E}^{\bullet}}$ is functorial in $X_\bullet$ and $\mathcal{E}^{\bullet}$;
        \item If $X_\bullet$ is constant, then $\eta_{\mathcal{E}^{\bullet}}$ is the Higgs field defined in \cite[Theorem 4.8]{heuer2023p};
        \item $\eta_{\mathcal{E}^{\bullet}}=0$ if and only if $\mathcal{E}^{\bullet}$ comes from a vector bundle on $X_{\bullet, \et}$.
    \end{enumerate}
\end{theorem}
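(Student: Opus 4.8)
The plan is to reduce the entire statement to Heuer's non-simplicial theorem \cite[Theorem 4.8]{heuer2023p} applied in each simplicial degree, and then to glue the resulting Higgs fields using the functoriality that is built into that theorem. First I would unwind the data of $\mathcal{E}^\bullet$: by (the $\mathcal{O}$-module analogue of) Lemma~\ref{lem.explicitsimplicialsheaf}, a vector bundle on $X_{\bullet,v}^\diamond$ amounts to a system of vector bundles $\mathcal{E}^n$ on $X_{n,v}^\diamond$ together with transition isomorphisms $X_\bullet(\phi)^\ast\mathcal{E}^m \xrightarrow{\sim} \mathcal{E}^n$ for each $\phi\colon [m]\to[n]$ in $\Delta$ (the cartesian condition). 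Applying \cite[Theorem 4.8]{heuer2023p} to the smooth rigid space $X_n$ and the bundle $\mathcal{E}^n$ produces, for every $n$, a unique Higgs field $\eta_{\mathcal{E}^n}\colon \mathcal{E}^n\to \mathcal{E}^n\otimes\Omega^1_{X_n}(-1)$ with $\eta_{\mathcal{E}^n}\wedge\eta_{\mathcal{E}^n}=0$.

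Next I would verify that these degreewise fields glue into a morphism of simplicial $v$-sheaves. For each structure map $\phi\colon X_m\to X_n$ of the simplicial object, the functoriality in \cite[Theorem 4.8]{heuer2023p} identifies $\phi^\ast\eta_{\mathcal{E}^n}$ with $\eta_{\phi^\ast\mathcal{E}^n}$; transporting along the transition isomorphism $\phi^\ast\mathcal{E}^n\cong \mathcal{E}^m$ and postcomposing with the canonical map $\phi^\ast\Omega^1_{X_n}(-1)\to \Omega^1_{X_m}(-1)$ yields exactly the cartesian-up-to-differentials compatibility required of simplicial Higgs bundles (see the remark following Definition~\ref{def.coherent B}). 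Hence $\{\eta_{\mathcal{E}^n}\}_n$ assembles to a Higgs field $\eta_{\mathcal{E}^\bullet}\colon \mathcal{E}^\bullet\to \mathcal{E}^\bullet\otimes\Omega^1_{X_\bullet}(-1)$, and the identity $\eta_{\mathcal{E}^\bullet}\wedge\eta_{\mathcal{E}^\bullet}=0$ holds since it holds in every degree (using that restriction to degree $n$ is exact, Remark~\ref{rem.restriction is exact}). Uniqueness and condition (1) both follow from the degreewise uniqueness and functoriality of Heuer's construction, while condition (2) holds by construction, as in the constant case each degree returns Heuer's field verbatim.

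Finally, for condition (3) I would note that $\eta_{\mathcal{E}^\bullet}=0$ if and only if $\eta_{\mathcal{E}^n}=0$ for all $n$ (since $\eta_{\mathcal{E}^\bullet}$ restricts to $\eta_{\mathcal{E}^n}$ in each degree), which by Heuer's condition (3) is equivalent to each $\mathcal{E}^n$ being pulled back from a vector bundle $E^n$ on $X_{n,\et}$. The remaining point is to promote the $E^n$ to a simplicial étale vector bundle, i.e. to descend the transition isomorphisms from the $v$-site to the étale site. I would obtain this from full faithfulness of the pullback $\Vect(X_{n,\et})\to \Vect(X_{n,v}^\diamond)$, which follows from the fact that the pushforward of the completed structure sheaf recovers $\mathcal{O}_{\et}$ in degree $0$ together with the projection formula applied to $\underline{\Hom}(\phi^\ast E^n, E^m)$: the $v$-isomorphisms $\phi^\ast E^n\cong E^m$ then descend uniquely to the étale site, and uniqueness forces the cocycle condition, producing $E^\bullet\in\Vect(X_{\bullet,\et})$ pulling back to $\mathcal{E}^\bullet$. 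Equivalently, one packages Heuer's degreewise equivalences between bundles with vanishing Sen field and $\Vect(X_{n,\et})$ and observes that, being compatible with all pullbacks $\phi^\ast$, they assemble to an equivalence on cartesian (i.e. simplicial) objects.

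The main obstacle is thus the simplicial descent in part (3): one must check that the étale descent supplied degreewise by Heuer is itself compatible with the simplicial structure, which is where full faithfulness of pullback on vector bundles is essential. By contrast, the gluing in the earlier steps is formal once Heuer's functoriality is invoked, exactly as in the proof of Proposition~\ref{prop.simplicial et Higgs and eh Higgs}.
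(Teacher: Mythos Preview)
Your proposal is correct and follows essentially the same strategy as the paper: apply \cite[Theorem 4.8]{heuer2023p} in each simplicial degree and use its functoriality in $X$ to verify compatibility with the face and degeneracy maps. Your treatment of condition (3) is in fact more explicit than the paper's, which simply notes that the statement can be checked degreewise; your observation that one needs full faithfulness of $\Vect(X_{n,\et})\to\Vect(X_{n,v}^\diamond)$ to descend the transition isomorphisms is the right way to make that step precise.
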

\begin{proof}
    First we notice that for constant $X_\bullet$, \cite[Theorem 4.8]{heuer2023p} provides a way of associating Higgs field $\eta_{\mathcal{E}^{\bullet}}$ to a pro-\'etale vector bundle $\mathcal{E}^\bullet$ on $X_{\bullet,v}^\diamond$. To extend such a construction to all of smooth simplicial rigid-analytic varieties in a functorial way, we observe that for a general smooth simplicial rigid-analytic variety $X_\bullet$ and a vector bundle $\mathcal{E}^\bullet$ on $X_{\bullet,v}^\diamond$, when restricted to each $n$, the above construction produces a Higgs field $\eta_{\mathcal{E}^{n}}$ on the vector bundle $\mathcal{E}^n$ over $X_{n,v}^\diamond$. Moreover, the desired compatibility of face maps and degeneracy maps follows from the functoriality in $X$ stated in \cite[Theorem 4.8]{heuer2023p}. Consequently, we obtain a Higgs field $\eta_{\mathcal{E}^{\bullet}}$ on $\mathcal{E}^\bullet$. As for (3), everything can be checked locally, hence degreewise, which then follows from the usual statement in geometric Sen theory.
\end{proof}

\subsection{$p$-adic Simpson correspondence for smooth proper simplicial rigid-analytic varieties}

Putting everything together, we now come to the desired theorem of $p$-adic Simpson correspondence for smooth proper simplicial rigid-analytic varieties.

\begin{theorem}[c.f. {\cite[Theorem 5.1]{heuer2023p}}]
\label{thm.simplicial p-adic simpson correspondence}
     Let $X_\bullet $ be a smooth proper simplicial rigid-analytic variety with a lift $\mathbb{X}_\bullet$ to ${B^+_\dR}/\xi^2$. Fix an exponential $\mathrm{exp}:C\to 1+\mathfrak{m}$. Then there is an equivalence of symmetric monoidal categories:
         \begin{eqnarray*}
			S_{\mathbb{X}_\bullet,\exp}:\Vect(X_{\bullet,\proet}) \xrightarrow{\simeq }\Higgs(X_{\bullet,\et}) \xrightarrow{ \simeq}\Higgs(X_{\bullet,\eh})
		\end{eqnarray*}
    Moreover, given a morphism $f:X_\bullet\to Y_\bullet$ of smooth proper simplicial rigid-analytic varieties with a compatible morphism between their lifts $\tilde{f}:\mathbb{X}_\bullet\to \mathbb{Y}_\bullet$. Then there is a natural transformation 
        \[\begin{tikzcd}
        \Vect(Y_{\bullet,\proet})\arrow[r,"S_{\mathbb{Y}_\bullet,\exp}"]  \arrow[d,"f^*"] & \Higgs(Y_{\bullet,\et}) \arrow[d,"f^*"]  \\
        \Vect(X_{\bullet,\proet})\arrow[r,"S_{\mathbb{X}_\bullet,\exp}"]   \arrow[ur,Rightarrow]          & \Higgs(X_{\bullet,\et})           
		\end{tikzcd}\]
        induced by $\tilde{f}$ that is compatible by the compositions.
\end{theorem}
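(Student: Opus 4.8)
The plan is to build the functor $S_{\mathbb{X}_\bullet,\exp}$ degreewise out of Heuer's non-simplicial correspondence and then assemble the degreewise data into a genuine morphism of simplicial objects, exploiting that a (Higgs) bundle on a simplicial site is a cosimplicial system satisfying a cartesianness condition (\cref{lem.explicitsimplicialsheaf}). Concretely, for each $[n]\in\Delta$ the rigid space $X_n$ is smooth and proper, carries the lift $\mathbb{X}_n$ obtained from $\mathbb{X}_\bullet$, and inherits the fixed exponential; thus \cite[Theorem 5.1]{heuer2023p} supplies a symmetric monoidal equivalence $S_{\mathbb{X}_n,\exp}:\Vect(X_{n,\proet})\xrightarrow{\simeq}\Higgs(X_{n,\et})$. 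The task is to promote this collection to a functor on the simplicial categories and to check it is an equivalence.

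First I would recall that, by (the $\mathcal{O}$-module analogue of) \cref{lem.explicitsimplicialsheaf}, a pro-\'etale vector bundle $\mathcal{E}^\bullet$ on $X_{\bullet,\proet}$ is the same datum as a family $\{\mathcal{E}^n\}$ together with, for each $\phi:[m]\to[n]$, an isomorphism $X_\bullet(\phi)^*\mathcal{E}^m\xrightarrow{\simeq}\mathcal{E}^n$ satisfying the evident cocycle identities. Applying $S_{\mathbb{X}_n,\exp}$ in each degree produces Higgs bundles $(E^n,\theta_n)=S_{\mathbb{X}_n,\exp}(\mathcal{E}^n)$. To glue these I would invoke the functoriality of Heuer's construction with respect to the morphism of lifts $\mathbb{X}_\bullet(\phi):\mathbb{X}_n\to\mathbb{X}_m$, which is the naturality clause of \cite[Theorem 5.1]{heuer2023p} and the content of the functoriality recorded in \cref{thm. canonical proetale Higgs field on simplicial}: it identifies $S_{\mathbb{X}_n,\exp}(X_\bullet(\phi)^*\mathcal{E}^m)$ with the pullback of $(E^m,\theta_m)$. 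The one point requiring care is that, because $\Omega^1_{X_\bullet}$ is not cartesian (\cref{eg.simplicial cotangent sheaf is not VB or dualizable}), the transition for the Higgs field is not an isomorphism but, writing $g=X_\bullet(\phi):X_n\to X_m$, the composite $g^*E^m\to g^*E^m\otimes g^*\Omega^1_{X_m}(-1)\to g^*E^m\otimes\Omega^1_{X_n}(-1)$ through the natural map $g^*\Omega^1_{X_m}(-1)\to\Omega^1_{X_n}(-1)$. This is exactly the ``cartesian up to differentials'' normalization imposed in the Remark following \cref{def.coherent B}, and the compatibility of Heuer's Higgs field with pullback guarantees it holds, so $\{(E^n,\theta_n)\}$ is a well-defined object of $\Higgs(X_{\bullet,\et})$.

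Next I would argue that the assembled functor is an equivalence. Both $\Vect(X_{\bullet,\proet})$ and $\Higgs(X_{\bullet,\et})$ are, by the cartesianness description above, limits over $\Delta$ (along the pullback functors $X_\bullet(\phi)^*$) of the corresponding degreewise groupoids, and $S_{\mathbb{X}_\bullet,\exp}$ is a transformation of these $\Delta$-diagrams, natural in $\phi$, that is a degreewise equivalence; a natural degreewise equivalence of diagrams induces an equivalence on limits, whence $S_{\mathbb{X}_\bullet,\exp}$ is an equivalence. (Alternatively one checks full faithfulness and essential surjectivity degreewise via \cref{prop.simplicial space spectral sequence}.) Symmetric monoidality is inherited degreewise, since the tensor product of simplicial bundles is computed in each degree and each $S_{\mathbb{X}_n,\exp}$ is symmetric monoidal. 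Composing with the equivalence $\Higgs(X_{\bullet,\et})\simeq\Higgs(X_{\bullet,h})$ of \cref{prop.simplicial et Higgs and eh Higgs} then yields the stated chain of equivalences.

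For the \emph{moreover} statement, given $f:X_\bullet\to Y_\bullet$ and a compatible lift $\tilde f:\mathbb{X}_\bullet\to\mathbb{Y}_\bullet$, I would produce the $2$-cell degreewise: for each $n$ the base-change naturality of \cite[Theorem 5.1]{heuer2023p} applied to $f_n:X_n\to Y_n$ and $\tilde f_n$ gives a natural isomorphism $S_{\mathbb{X}_n,\exp}\circ f_n^*\Rightarrow f_n^*\circ S_{\mathbb{Y}_n,\exp}$, and these are compatible with the simplicial structure maps by the coherence of that naturality, hence glue to the desired natural transformation. Compatibility with compositions follows from the corresponding cocycle condition of the non-simplicial transformations, again checked in each degree. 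The main obstacle is not any single hard estimate but the bookkeeping of the non-cartesian differentials: one must verify that the degreewise Heuer data genuinely satisfy the normalization of \cref{def.coherent B} under all face and degeneracy maps, and that the gluing isomorphisms together with the $2$-cells obey the cocycle identities making the limit description legitimate. This reduces entirely to the functoriality already recorded in \cite[Theorem 5.1]{heuer2023p} and \cref{thm. canonical proetale Higgs field on simplicial}, but it is where essentially all the content lies.
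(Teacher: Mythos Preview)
Your proposal is correct and follows essentially the same strategy as the paper: build $S_{\mathbb{X}_\bullet,\exp}$ degreewise from Heuer's equivalence, use the functoriality clause of \cite[Theorem 5.1(2)]{heuer2023p} to make the degreewise functors compatible with the simplicial structure maps, and conclude that the resulting transformation of diagrams is an equivalence. The paper states this last step via the nonabelian spectral sequence of \cref{rem.nonabelian simplicial space spectral sequence} (comparing $E_1$-pages), which is exactly your parenthetical alternative; your primary phrasing as an equivalence of $\Delta$-indexed limits is equivalent and arguably cleaner, and your explicit attention to the non-cartesian differentials (the normalization in the Remark after \cref{def.coherent B}) spells out a point the paper leaves implicit.
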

\begin{proof}
    For each fixed $n$, the choice of the lift $\mathbb{X}_\bullet$ and the exponential $\mathrm{exp}$ induces an equivalence 
    \begin{equation*}
        S_{\mathbb{X}_n,\exp}:\Vect(X_{n,\proet}) \xrightarrow{\simeq }\Higgs(X_{n,\et}) \xrightarrow{ \simeq}\Higgs(X_{n,\eh}).
    \end{equation*}
   Indeed, the first equivalence (and the construction of the functor $S_{\mathbb{X}_n,\exp}$) is \cite[Theorem 5.1]{heuer2023p}, while the second equivalence follows from \cref{prop.simplicial et Higgs and eh Higgs}. Moreover, when $n$ varies, $S_{\mathbb{X}_n,\exp}$ are compatible with each other by the functoriality given in \cite[Theorem 5.1 (2)]{heuer2023p}. Thus we can define $S_{\mathbb{X}_\bullet,\exp}$ degreewise. Finally by \cref{rem.nonabelian simplicial space spectral sequence}, we get the desired equivalence by comparing the $E_1$-page of the spectral sequence.

\end{proof}

We finally discuss the cohomological comparison. 

\begin{proposition}[c.f. {\cite[Theorem 5.5]{heuer2023p}}]
\label{prop.simplicial cohomological comparison}
    Let $X_\bullet$ be a smooth proper rigid-analytic variety with a lift $\mathbb{X}_\bullet$ to $B^+_\dR/\xi^2$. Fix an exponential $\exp:C\to 1+\mathfrak{m}$. By \cref{thm.simplicial p-adic simpson correspondence}, for any pro-\'etale vector bundle $\mathcal{E}^\bullet $, there is an \'etale Higgs bundle $(E^\bullet ,\theta)$ given by $S_{\mathbb{X}_\bullet,\exp}(\mathcal{E})$. Then there is a natural isomorphism 
    $R\nu_*\mathcal{E}^\bullet \simeq C_\Higgs(E^\bullet ,\theta)$ inducing 
    $$ R\Gamma_\proet(X_\bullet,\mathcal{E}^\bullet) \simeq R\Gamma_\et(X_\bullet,(E^\bullet,\theta)).$$
\end{proposition}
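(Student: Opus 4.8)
The plan is to reduce everything to a single degree of the simplicial object and invoke the non-simplicial comparison of \cite[Theorem 5.5]{heuer2023p}, then reassemble --- exactly the bookkeeping already used to build $S_{\mathbb{X}_\bullet,\exp}$ in \cref{thm.simplicial p-adic simpson correspondence}. Write $\nu\colon X_{\bullet,\proet}\to X_{\bullet,\et}$ for the projection of simplicial sites, and for each $n$ set $\nu_n\colon X_{n,\proet}\to X_{n,\et}$ and $(E^n,\theta_n)=S_{\mathbb{X}_n,\exp}(\mathcal{E}^n)$. As each $X_n$ is smooth and proper with the lift $\mathbb{X}_n$, \cite[Theorem 5.5]{heuer2023p} furnishes a natural isomorphism $R\nu_{n,*}\mathcal{E}^n\simeq C_\Higgs(E^n,\theta_n)$ in $D^+(X_{n,\et})$ that is functorial in $(X_n,\mathcal{E}^n)$; this is the only external input.

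First I would identify the degree-$n$ restrictions of the two objects I want to compare. For $C_\Higgs(E^\bullet,\theta)$ this is immediate from \cref{def.coherent B}, since the Higgs complex is built termwise and its restriction to degree $n$ is $C_\Higgs(E^n,\theta_n)$. For $R\nu_*\mathcal{E}^\bullet$ I would argue that derived pushforward commutes with restriction to degree $n$: writing $g_n$ for the inclusion of the degree-$n$ piece, $g_n^{\ast}$ is exact (\cref{rem.restriction is exact}) and has an exact left adjoint, built from pullbacks and (finite) coproducts, so it preserves injectives; applying it to an injective resolution of $\mathcal{E}^\bullet$ and using the strict identity $g_n^{\ast}\nu_*=\nu_{n,*}g_n^{\ast}$ gives $g_n^{\ast}R\nu_*\mathcal{E}^\bullet\simeq R\nu_{n,*}\mathcal{E}^n$. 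Thus both sides of the desired isomorphism live in $D^+(X_{\bullet,\et})$ and restrict in each degree to the two sides of Heuer's isomorphism.

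The hard part will be checking that Heuer's degreewise isomorphisms are compatible with the cosimplicial transition maps of \cref{lem.explicitsimplicialsheaf}, so that they glue to an isomorphism of objects of $D^+(X_{\bullet,\et})$ rather than a mere degreewise coincidence. Concretely, for each $\phi\colon[m]\to[n]$ in $\Delta$ one must check that the comparison map intertwines the transition map $X_\bullet(\phi)^{\ast}R\nu_{m,*}\mathcal{E}^m\to R\nu_{n,*}\mathcal{E}^n$ on the pro-\'etale side with $X_\bullet(\phi)^{\ast}C_\Higgs(E^m,\theta_m)\to C_\Higgs(E^n,\theta_n)$ on the Higgs side. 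I expect this to follow from the functoriality of \cite[Theorem 5.5]{heuer2023p} in the base together with the cartesian-up-to-differentials condition imposed on $(E^\bullet,\theta)$, i.e. the same compatibility that already made $S_{\mathbb{X}_\bullet,\exp}$ well defined degreewise. Granting it, the degreewise isomorphisms assemble into $R\nu_*\mathcal{E}^\bullet\simeq C_\Higgs(E^\bullet,\theta)$ in $D^+(X_{\bullet,\et})$.

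Finally I would deduce the cohomological statement by applying $R\Gamma(X_{\bullet,\et},-)$: the Leray spectral sequence for $X_{\bullet,\proet}\xrightarrow{\nu}X_{\bullet,\et}$ identifies $R\Gamma_\proet(X_\bullet,\mathcal{E}^\bullet)$ with $R\Gamma(X_{\bullet,\et},R\nu_*\mathcal{E}^\bullet)$, and the latter is by definition $R\Gamma_\et(X_\bullet,(E^\bullet,\theta))$ (\cref{def.definition of dolbeault cohomology}). As a sanity check, or as an alternative route that sidesteps the object-level isomorphism, one can instead compare the two simplicial spectral sequences of \cref{prop.simplicial space spectral sequence}: their $E_1$-pages $H^q_\proet(X_p,\mathcal{E}^p)$ and $H^q_\et(X_p,(E^p,\theta_p))$ agree compatibly with the differentials by \cite[Theorem 5.5]{heuer2023p}, forcing an isomorphism of abutments. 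Either way the content beyond Heuer's theorem is purely the simplicial compatibility flagged above.
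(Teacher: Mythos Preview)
Your proposal is correct and follows essentially the same strategy as the paper: reduce to each simplicial degree, invoke \cite[Theorem 5.5]{heuer2023p} there, and use the functoriality in \cite[Theorem 5.1]{heuer2023p} to make the degreewise comparisons cohere, then pass to cohomology via the spectral sequence of \cref{prop.simplicial space spectral sequence}.

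The only organizational difference worth noting is that the paper builds a \emph{single} comparison map $C_\Higgs(E^\bullet,\theta)\to R\nu_*\mathcal{E}^\bullet$ on the simplicial \'etale site from the outset, by writing it down on each object $U_n\in X_{n,\et}$ using Heuer's map and invoking functoriality for the transition data; the spectral sequence is then applied only to verify that this map is a quasi-isomorphism. This sidesteps the potential coherence headache implicit in your phrasing of ``gluing'' degreewise isomorphisms in $D^+(X_{\bullet,\et})$: having one honest morphism whose restrictions are Heuer's isomorphisms is strictly easier than assembling a cosimplicial family of derived-category isomorphisms after the fact. Your alternative route via comparing $E_1$-pages likewise only yields a natural isomorphism of abutments once such a map is in hand. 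In substance, though, your ``hard part'' is exactly the paper's construction, and the content you identify as needed---functoriality of Heuer's comparison along the simplicial structure maps---is precisely what the paper uses.
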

\begin{proof}
    By \cref{prop.simplicial space spectral sequence}, it suffices to construct a map 
    $$ C_\Higgs(E^\bullet,\theta)\to R\nu_*\mathcal{E}^\bullet$$
    such that on each $X_n$ it is compatible with the one defined in \cite[Theorem 5.5]{heuer2023p}. However this can be easily defined as presheaves since for some $U_n \in X_{n,\et}$ we have 
    $$C_\Higgs(E^\bullet,\theta)(U_n)=C_\Higgs(E^n,\theta_n)(U_n)\to R\nu_*\mathcal{E}^\bullet=(R\nu_*\mathcal{E}^n)(U_n)$$
    and is given by the map from \cite[Theorem 5.5]{heuer2023p} and the functoriality from \cite[Theorem 5.1]{heuer2023p}.
\end{proof}

\newpage
\section{$p$-adic Simpson correspondence for proper rigid-analytic varieties}
In this section, we use all the ingredients above to prove a $p$-adic Simpson correspondence for any proper rigid-analytic variety over $C$, removing the smoothness condition in the work of \cite{heuer2023p}.

\subsection{Cohomological dimension bounds}

We need to prove that the cohomological dimension bounds of pro-\'etale vector bundles on a rigid-analytic variety are the same as expected, i.e., twice as the dimension of the rigid-analytic variety. To this end, we need the following result, which is well-known to experts.
\begin{lemma}
\label{lem.proetale VB has h-descent}
    The presheaf
    $$ \Rig_\eh\to \mathrm{Groupoids}$$
    sending $Y$ to $\Vect(Y_\proet)$ is a $\eh$-sheaf. 
\end{lemma}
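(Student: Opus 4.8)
The plan is to reduce $h$-descent to $v$-descent of vector bundles by passing to diamonds, using the dictionary of \cref{lem. comparison to v-topology}. Two inputs are needed, both recorded in the remark following \cref{thm.descent of vector bundles}. First, pullback along the canonical map of sites $Y_v\to Y_\proet$ induces an equivalence $\Vect(Y_\proet)\xrightarrow{\simeq}\Vect(Y_v)$ identifying pro-\'etale $\widehat{\mathcal{O}}$-vector bundles with $v$-bundles; since the pro-\'etale site and the completed structure sheaf are computed on (reduced) perfectoid test objects, $\Vect(Y_v)$ depends only on the diamond $Y^\diamond$, and in particular is insensitive to nilpotents. Second, the assignment $D\mapsto \Vect(D_v)$ is a $v$-stack on diamonds, i.e. finite locally free $\widehat{\mathcal{O}}$-modules satisfy $v$-descent, by the argument of \cite[Theorem 3.5.8]{kedlaya2016relative2} (see also \cite[Theorem 7.13]{heuer2022moduli}). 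All of these are natural in $Y$.

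Granting these, the argument is short. A finite family $\{p_i:U_i\to X\}_{i\in I}$ is an $h$-cover exactly when the single quasicompact map $p=\coprod_i p_i:Y:=\coprod_{i\in I}U_i\to X$ is surjective, and $h$-descent for the family is equivalent to $\Vect(X_\proet)\xrightarrow{\simeq}\lim_{[n]\in\Delta}\Vect\big((Y^{\bullet/X})_\proet\big)$, where $Y^{\bullet/X}$ is the \v{C}ech nerve of $p$ in $\Rig$. By \cref{lem. comparison to v-topology}, $p^\diamond:Y^\diamond\to X^\diamond$ is a $v$-cover, and since the diamond functor preserves fibre products \cite{scholze2017etale} we have $(Y^{\bullet/X})^\diamond=(Y^\diamond)^{\bullet/X^\diamond}$, the \v{C}ech nerve of $p^\diamond$. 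Rewriting each pro-\'etale term via the comparison of the previous paragraph turns the target of the limit into $\lim_{[n]}\Vect\big((Y^\diamond)^{\bullet/X^\diamond}_v\big)$, which by $v$-descent of vector bundles equals $\Vect(X^\diamond_v)\simeq\Vect(X_\proet)$; this is precisely the sheaf condition.

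The only substantive point is the $v$-descent input itself; everything else is formal bookkeeping. I would take care to justify the reduction to a single cover ($\coprod_i U_i$ is quasicompact because $I$ is finite, and fibre products over $X$ exist in $\Rig$), and to check that the comparison $\Vect(Y_\proet)\simeq\Vect(Y_v)$, being functorial in $Y$, genuinely intertwines the two \v{C}ech nerves so that the limits match; the latter is automatic once $(-)^\diamond$ is known to commute with fibre products and to carry $h$-covers to $v$-covers.

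Alternatively, one could verify the three criteria of \cref{prop.h-sheaf=fppf sheaf + abstract blowups}: condition (1) is a special case of $v$-descent since fppf covers induce $v$-covers of diamonds; condition (3) holds because $X^\diamond=X_{\mathrm{red}}^\diamond$ forces $\Vect(X_\proet)=\Vect((X_{\mathrm{red}})_\proet)$; and condition (2), the abstract blowup square, follows from $v$-descent along the proper surjection together with nilpotent-invariance to recognize the fibre products in the \v{C}ech nerve. I expect the direct route above to be cleaner, so I would present that one.
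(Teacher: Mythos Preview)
Your proposal is correct and follows essentially the same approach as the paper: identify $\Vect(Y_\proet)\simeq\Vect(Y^\diamond_v)$ via \cite[Theorem 3.5.8]{kedlaya2016relative2} and the fact that affinoid perfectoids form a basis, then invoke $v$-descent of vector bundles on diamonds together with \cref{lem. comparison to v-topology} to conclude $h$-descent. The paper's proof is slightly terser (it does not spell out the \v{C}ech-nerve bookkeeping or the alternative via \cref{prop.h-sheaf=fppf sheaf + abstract blowups}), but the argument is the same.
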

\begin{proof}
By \cite[Theorem 3.5.8]{kedlaya2016relative2}, for any affinoid perfectoid space $X$, the category of pro-\'etale vector bundles on $X$ is equivalent to the category of $v$-vector bundles on $X$. Let $Y$ be any rigid-analytic variety. Since the affinoid perfectoid objects form a basis for the pro-\'etale site of $Y$ by \cite[Proposition 4.8]{scholze2013adic}, one immediately deduce that the category of pro-\'etale vector bundles on $Y$ is equivalent to the category of $v$-vector bundles on $Y^\diamond$, namely
$$\Vect(Y_\proet)\simeq \Vect(Y^\diamond_v).$$
Hence the category of pro-\'etale vector bundles satisfies $v$-hyperdescent. By \cref{rem.eh vs v topology}, $v$-topology is finer than $\eh$-topology, the consequence easily follows.
\end{proof}

By (the proof of) the previous lemma, we do not distinguish pro-\'etale vector bundles and $v$-vector bundles in the following. Now we get to prove the cohomological bound. The following argument was suggested to us by Bogdan Zavyalov.

\begin{proposition}
\label{prop.cohomological dimension bounds of proet VB}
    Let $X$ be a rigid-analytic variety of dimension $n$ and $\mathcal{E}$ be a pro-\'etale vector bundle on $X$. Then $R\nu_*\mathcal{E}$ is concerntrated in cohomological dimension $[0,n]$ of $D^b_\mathrm{coh}(X_\et)$. In particular, the cohomology group $R\Gamma_\proet(X,\mathcal{E})$ lies in cohomological degree $[0,2n]$.
\end{proposition}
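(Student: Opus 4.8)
The plan is to deduce the ``in particular'' clause formally and to put all the weight on the statement that $R\nu_{*}\mathcal{E}$ is coherent and concentrated in degrees $[0,n]$. Granting this, the Leray spectral sequence $E_2^{p,q}=H^p_\et(X,R^q\nu_{*}\mathcal{E})\Rightarrow H^{p+q}_\proet(X,\mathcal{E})$, together with the fact that the étale (equivalently analytic) cohomology of a coherent sheaf on an $n$-dimensional rigid space vanishes above degree $n$ (Kiehl's finiteness and rigid GAGA, c.f. \cite{fujiwara2013foundations}), immediately yields $R\Gamma_\proet(X,\mathcal{E})\in[0,2n]$. Everything thus reduces to the claim about $R\nu_{*}\mathcal{E}$, which I would establish by induction on $n=\dim X$. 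The case $n=0$ is trivial ($X$ is a finite union of copies of $\Spa(C)$ and $\mathcal{E}$ is free), and the smooth case is \cite[Theorem 5.5]{heuer2023p}: on a smooth $X$ of dimension $n$ a pro-\'etale vector bundle $\mathcal{E}$ carries a Higgs field $(E,\theta)$ with $R\nu_{*}\mathcal{E}\simeq C_\Higgs(E,\theta)$, a complex of coherent sheaves living in degrees $[0,n]$.

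The inductive engine is an abstract blow-up square. Using resolution of singularities for rigid spaces \cite[Corollary 2.4.8]{guo2019hodge} I would choose a proper surjective $\pi\colon X'\to X$ with $X'$ smooth of dimension $n$ which is an isomorphism over a dense Zariski open $U=X\setminus Z$ with $\dim Z\le n-1$; write $i\colon Z\hookrightarrow X$, $j\colon E=\pi^{-1}(Z)\hookrightarrow X'$ and $q\colon E\to Z$, so $\dim E\le n-1$. Since pro-\'etale vector bundles satisfy $h$-descent (\cref{lem.proetale VB has h-descent}) and $\{X'\to X,\ Z\to X\}$ is an abstract blow-up square in the sense of \cref{prop.h-sheaf=fppf sheaf + abstract blowups}(2), applying $R\nu_{X,*}$ to the (derived) descent square produces a Mayer--Vietoris triangle in $D^b_\mathrm{coh}(X_\et)$
\[
R\nu_{X,*}\mathcal{E}\longrightarrow R\pi_{\et,*}R\nu_{X',*}(\mathcal{E}|_{X'})\ \oplus\ Ri_{\et,*}R\nu_{Z,*}(\mathcal{E}|_{Z})\longrightarrow R(\pi j)_{\et,*}R\nu_{E,*}(\mathcal{E}|_{E}).
\]

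Coherence and boundedness of $R\nu_{X,*}\mathcal{E}$ fall out of this triangle at once: the first term is coherent because $R\nu_{X',*}(\mathcal{E}|_{X'})$ is coherent (smooth case) and $R\pi_{\et,*}$ preserves $D^b_\mathrm{coh}$ for the proper $\pi$ (proper mapping theorem), while the remaining two terms are coherent by the inductive hypothesis applied to the $(\le n-1)$-dimensional $Z$ and $E$ together with exactness of $Ri_{\et,*}$ and properness of $\pi j$. Moreover, restricting the triangle to $U$ gives $R\nu_{X,*}\mathcal{E}|_U\simeq R\nu_{U,*}(\mathcal{E}|_U)\in[0,n]$, so that $R^{>n}\nu_{X,*}\mathcal{E}$ is supported on $Z$. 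Since the target bound is local on $X_\et$ and $R\nu_{X,*}\mathcal{E}$ is now known to be coherent, I would reduce the sharp statement to checking $H^j_\proet(V,\mathcal{E})=0$ for $j>n$ with $V$ affinoid, using that $H^{>0}_\et(V,-)$ of a coherent sheaf vanishes, whence $H^j_\proet(V,\mathcal{E})=\Gamma(V,R^j\nu_{*}\mathcal{E})$.

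The remaining point is the sharp degree bound, and this is where I expect the real difficulty to sit. The subtlety is that $R\pi_{\et,*}$ can raise cohomological degree by the dimension of the exceptional fibres ($\le n-1$), so the two pushforward terms in the triangle individually may reach degree $2n-1$; the assertion $R\nu_{X,*}\mathcal{E}\in[0,n]$ is precisely that these high-degree contributions cancel along the connecting map. I would attack this by rewriting the fibre of $R\pi_{\et,*}R\nu_{X',*}(\mathcal{E}|_{X'})\to R(\pi j)_{\et,*}R\nu_{E,*}(\mathcal{E}|_{E})$ as $R\pi_{\et,*}$ of the extension by zero from $X'\setminus E\cong U$ of the (smooth, hence $[0,n]$) complex $R\nu_{X',*}(\mathcal{E}|_{X'})$, and then running the long exact sequence degree by degree above $n$, letting the strictly lower-dimensional centre $Z$ (handled by induction) absorb the offending degrees. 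It is worth noting that this cancellation is equivalent to the Hodge--Tate weight refinement $R^i\pi_{X,*}\Omega^j_h=0$ for $i+j>n$, sharpening the separate bounds $i,j\le n$ of \cref{cor.cohomological dimension bound of h-differentials}; making the proper-pushforward bookkeeping along the resolution yield exactly this diagonal vanishing, rather than the crude $[0,2n-1]$ estimate, is the step I anticipate to be the most delicate and the one requiring the most care in the induction, in the spirit of the dimension induction already used in \cref{prop.h-sheaf=fppf sheaf + abstract blowups}.
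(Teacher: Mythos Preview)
Your overall architecture coincides with the paper's: establish the smooth case first, then extend to arbitrary $X$ by induction on $\dim X$ using the abstract blow-up fibre sequence supplied by $h$-descent of pro-\'etale vector bundles (\cref{lem.proetale VB has h-descent}, \cref{prop.h-sheaf=fppf sheaf + abstract blowups}).  The one substantive divergence is in the smooth case.  You invoke \cite[Theorem~5.5]{heuer2023p} to write $R\nu_*\mathcal{E}\simeq C_\Higgs(E,\theta)$ and read off the range $[0,n]$ and coherence; the paper instead argues directly via Zavyalov's almost coherence machinery: pick an $\mathcal{O}^+$-lattice $\mathcal{E}^+$, choose a formal model $\mathfrak{X}$ on which $\mathcal{E}^+$ is small on a finite cover, compute $R\Psi_*\mathcal{E}^+$ by \v{C}ech cohomology along a strictly totally disconnected perfectoid cover, and show $(R\Psi_*\mathcal{E}^+)^a\in\widehat{D}^{[0,n]}_{\mathrm{acoh}}(\mathfrak{X}_\et)^a$, then identify $t^*R\Psi_*\mathcal{E}^+$ with $R\nu_*\mathcal{E}$.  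The paper itself remarks that routes through the small/local Simpson correspondence are legitimate alternatives, so your approach is in that spirit; just be aware that Heuer's Theorem~5.5 as cited assumes properness and a $B^+_\dR/\xi^2$-lift, so you must localise (smooth affinoids admit lifts) to obtain the statement for an arbitrary smooth $X$.

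Your flagging of the sharp $[0,n]$ bound in the singular case as the delicate step is well taken: the triangle only places the two proper-pushforward terms in $[0,2n-1]$ a priori, and the paper's proof is exactly as terse here, concluding simply ``by induction on dimension and the smooth case, we get the desired statements'' without spelling out the cancellation above degree $n$.  For the applications in the paper (notably \cref{thm. cohomology comparison}) only the $[0,2n]$ bound on $R\Gamma_\proet$ is used, and that does follow cleanly from the triangle; the sharper $R\nu_*\in[0,n]$ for singular $X$ is asserted but, as you suspected, not fully justified by the fibre-sequence argument alone in either version.
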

\begin{proof}
We first prove the case where $X$ is smooth. We may assume that $X=\Spa(A)$ is affinoid. Moreover, by \cite[Theorem 1.3.4]{zavyalov2021almost} and further replacing $X$ by a finite \'etale cover if necessary, we may assume there is locally free $\mathcal{O}^+_{X^\diamond_v}$-sheave $\mathcal{E}^+$ such that $\mathcal{E}^+[1/p]=\mathcal{E}$. Choose an admissible formal model $\mathfrak{X}=\Spf(R)$ of $X$ and denote the natural morphism by $t: X_\et \to \mathfrak{X}_\et$ and the composition by $\Psi: X^\diamond_v \to \mathfrak{X}_\et$. By \cite[Lemma 7.8.7]{zavyalov2021almost}, we may choose a formal model $\mathfrak{X}$ such that it admits a finite open cover $\{\mathfrak{U}_i\}_{i\in I}$ where the restriction of $\mathcal{E}^+$ to each $\mathfrak{U}_i$ is small, i.e. can be trivialized by a finite \'etale cover. We first claim that 
$$R\Psi_* \mathcal{E}^{+}\in \widehat{D}^+_{\mathrm{qcoh}}(\mathfrak{X}_\et),\ (R\Psi_* \mathcal{E}^{+})^a\in \widehat{D}^{[0,n]}_\mathrm{acoh}(\mathfrak{X}_\et)^a.$$
For the first claim, it suffices to show that the natural morphism 
$$\widetilde{H^i(X^\diamond_v,\mathcal{E}^+)}\to R^i\Psi_*\mathcal{E}^+$$
is an isomorphism. To this end, it reduces to show that for any affine open $\mathfrak{U}=\Spf(R[1/f]^\wedge)$ with its rigid-generic fibre $U$, the natural morphism of base change
$$ H^i_v(X^\diamond,\mathcal{E}^+)\widehat{\otimes}_{R}R[1/f]^\wedge \to H^i_{v}(U^\diamond,\mathcal{E}^+)$$
is an isomorphism. Choose a strictly totally disconnected perfectoid cover $X_\infty=\Spa(A_\infty,A_\infty^+)$ of $X$, then by \cite[Corollary 7.5.2]{zavyalov2021almost} the pullback $U_\infty=X_\infty\times_X U$ is also a strictly totally disconnected perfectoid space. Since strictly totally disconnected perfectoid spaces are acyclic objects for $\mathcal{E}^+$, both cohomology groups are computed by the Čech cohomologys of $\Gamma((X_\infty^{\bullet/X})^\diamond,\mathcal{E}^+)$ and $\Gamma((U_\infty^{\bullet/U})^\diamond,\mathcal{E}^+)$ respectively. Since the base change is $p$-completely flat, it reduces to showing that the natural morphism 
$$ \Gamma((X_\infty^{j/X})^\diamond,\mathcal{E}^+)\widehat{\otimes}_{R}R[1/f]^\wedge \to \Gamma(({U}_\infty^{j/U})^\diamond,\mathcal{E}^+)$$
is an isomorphism. Since strictly totally disconnected perfectoid space does not have any higher cohomology for $\mathcal{E}^+$, we have $\Gamma(({U}_\infty^{j/U})^\diamond,\mathcal{E}^+/p)=\Gamma(({U}_\infty^{j/U})^\diamond,\mathcal{E}^+)/p$. Hence by derived Nakayama lemma, c.f. \cite[Lemma 0G1U]{stacks-project}, it suffices to check the above isomorphism for $\mathcal{E}^+/p$ where it is proved by \cite{zavyalov2021almost}. Hence we win. As for the second claim for almost coherence, since we have already proved that $R\Psi_*\mathcal{E}^+$ is quasicoherent, by \cite[Proposition 7.1.6]{bhattpadicnote} and the assumption that $\mathfrak{X}$ is affine, it reduces to showing that the $R\Psi_*(\mathcal{E}^+/p)$ is almost coherent. This is proved by \cite[Theorem 7.8.8]{zavyalov2021almost}. As for the cohomological dimension bounds, it follows from the almost analogue of the derived Nakayama lemma and \cite[Theorem 7.10.3]{zavyalov2021almost}.

To prove that $R\nu_*\mathcal{E}$ is in fact coherent, it suffices to show that $t^*R\Psi_*\mathcal{E}^+=R\nu_*\mathcal{E}$. For any rational open set $U$, by taking stricly totally disconnected perfectoid cover of $U$, it is clear that $$\Gamma(U^\diamond,t^*R^i\Psi_*\mathcal{E}^+)=\Gamma(U^\diamond,R^i\nu_*\mathcal{E}).$$
Thus it suffices to show that for any (standard) \'etale morphism $V\to U$, the fibre of $$\Gamma(U,R^i\nu_*\mathcal{E}^+)\widehat{\otimes}\Gamma(V,\mathcal{O}^+)\to \Gamma(V,R^i\nu_*\mathcal{E}^+)$$
is killed by bounded $p$-torsion. To this end, since the restriction of $\mathcal{E}^+$ to each $\mathfrak{U}_i$ is small and hence its rigid-generic fibre. Therefore, we may reduce to the case where $\mathcal{E}^+$ is trivial. Then we win by \cite[Lemma 6.18]{scholze2013adic}.

Finally, we extend the statement to the general case by using blowup excisions. Again, we may assume that $X$ is quasicompact. Since pro-\'etale vector bundles satisfy $v$-descent and hence $\eh$-descent by \cref{lem.proetale VB has h-descent}, we may freely use descents of $\eh$-sheaves in the following. Passing to its reduction and use that $\eh$-coverings contain universal homeomorphisms, we may also assume that $X$ is reduced. Hence by resolution of singularities, there is a nowhere dense Zariski closed subset $Z$ of $X$ whose blowup $Y$ is smooth with exceptional divisor $E$. By blowup excisions of $\eh$-topology, i.e. \cref{prop.h-sheaf=fppf sheaf + abstract blowups}, there is an fibre sequence
$$R\Gamma_\proet(X,\mathcal{E})\to R\Gamma_\proet(Z,\mathcal{E})\oplus R\Gamma_\proet(Y,\mathcal{E})\to R\Gamma_\proet(E,\mathcal{E})$$
and similarly for $R\nu_*\mathcal{E}$. Hence by induction on dimension and the smooth case, we get the desired statements.

\end{proof}
\begin{remark}
    We remark here that there are several other ways to prove the above results. For example, since this problem is local, one may use Faltings' small correspondence in the smooth case, c.f. \cite[Section 3]{faltings2005p} to use the corresponding cohomology of small Higgs bundles to compute the pro-\'etale cohomology. Another way is to use the newly developed theory of perfectoid crystals as in \cite{bhattpadicnote}. Namely, one can realize pro-\'etale vector bundles (\'etale locally) as rationalized almost overconvergent \'etale $\hat{\mathcal{O}}^+$-vector bundles. One then uses \cite[Theorem 5.3.5]{bhattpadicnote} to relate it with perfectoid crystals and uses the affineness of perfectoidization in this case to deduce the desired cohomological bounds. In this way one can avoid using the resolution of singularities.
\end{remark}

\subsection{$p$-adic Simpson correspondence for proper rigid-analytic varieties}
\begin{theorem}
\label{thm. p-adic simpson for proper rigid-analytic variety}
    Let $X$ be a proper rigid-analytic variety. Fix a 1-truncated smooth proper $\eh$-hypercover of $X$ with a lift $\mathbb{X}_\bullet$ to $B^+_\dR/\xi^2$ and an exponential map $\exp: C\to 1+\mathfrak{m}$. Then there is an equivalence of symmetric monoidal categories:
    $$S_{\mathbb{X}_\bullet,\exp}:\Vect(X_\proet) \xrightarrow{\simeq} \Higgs(X_\eh).$$
    Moreover, given a morphism $f:X\to Y$ of proper rigid-analytic varieties with 1-truncated smooth proper $\eh$-hypercover $X_\bullet,Y_\bullet$ and compatible morphism between their lifts $\tilde{f}:\mathbb{X}_\bullet\to \mathbb{Y}_\bullet$. Then there is a natural transformation 
        \[\begin{tikzcd}
        \Vect(Y_{\proet})\arrow[r,"S_{\mathbb{Y}_\bullet,\exp}"]  \arrow[d,"f^*"] & \Higgs(Y_\eh) \arrow[d,"f^*"]  \\
        \Vect(X_{\proet})\arrow[r,"S_{\mathbb{X}_\bullet,\exp}"]   \arrow[ur,Rightarrow]          & \Higgs(X_\eh)    
		\end{tikzcd}\]
        induced by $\tilde{f}$ that is compatible by the compositions.

\end{theorem}

We will show this equivalence by choosing a liftable smooth proper hypercover $X_\bullet$ of $X$ and use the simplicial $p$-adic Simpson correspondence to descend on $X$. To this end, recall the following results of \cite{guo2019hodge}.

\begin{proposition}[{\cite[Proposition 7.4.4]{guo2019hodge}}]
\label{prop. proper rigid-analytic variety has liftable hypercovers}
    Let $X$ be a proper rigid-analytic variety. Then for any natural number $n$, there exists a $n$-truncated augmented simplicial rigid-analytic variety $\mathbb{X}_\bullet \to \mathbb{X}$ which is flat tft over $B^+_\dR/\xi^2$. Moreover, the reduction to $C$ induces an $n$-truncated smooth proper $\eh$-hypercover of $X$.
\end{proposition}

In order to discuss functoriality, one needs to take hypercovers with lifts in a compatible way. The following corollary guarantees the existence of such hypercovers.

\begin{corollary}
\label{cor. weak functoriality of the hypercovers}
    Let $X\to Y$ be a morphism of proper rigid-analytic varieties. Then for any natural number $n$, there are $n$-truncated augmented simplicial rigid-analytic varieties $\mathbb{X}_\bullet \to \mathbb{X}$ and $\mathbb{Y}_\bullet \to \mathbb{Y}$ that are flat tft over $B^+_\dR/\xi^2$ inducing smooth proper $\eh$-hypercovers reduction to $C$ and a commutative diagram
    \[\begin{tikzcd}
			\mathbb{X}_\bullet \arrow[r]  \arrow[d] & \mathbb{Y}_\bullet \arrow[d]  \\
			\mathbb{X} \arrow[r]             & \mathbb{Y}  .            
		\end{tikzcd}\]
\end{corollary}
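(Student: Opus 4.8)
The plan is to deduce the relative statement from the absolute one (\cref{prop. proper rigid space has liftable hypercovers}, i.e. \cite[Proposition 7.4.4]{guo2019hodge}) by relativizing its inductive construction along $f\colon X\to Y$, building the two hypercovers simultaneously so that the $X$-side maps to the $Y$-side at every level. First I would apply \cref{prop. proper rigid space has liftable hypercovers} to $Y$ alone to obtain an $n$-truncated augmented simplicial space $\mathbb{Y}_\bullet\to\mathbb{Y}$, flat tft over $B^+_{\dR}/\xi^2$, whose reduction to $C$ is a smooth proper $\eh$-hypercover $Y_\bullet\to Y$; this fixes the bottom-right corner of the square once and for all. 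The goal is then to produce a smooth proper liftable simplicial object $\mathbb{X}_\bullet$ together with a map $\mathbb{X}_\bullet\to\mathbb{Y}_\bullet$ lifting a map of $\eh$-hypercovers $X_\bullet\to Y_\bullet$ over $f$.

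I would construct $\mathbb{X}_\bullet\to\mathbb{Y}_\bullet$ by induction on the level $m\le n$. At level $0$ I work on reductions: since $Y_0\to Y$ is an $\eh$-cover and $\eh$-covers are stable under base change, $X\times_Y Y_0\to X$ is again an $\eh$-cover, and choosing a smooth proper $\eh$-cover $X_0\to X\times_Y Y_0$ (by the resolution/alteration input used in loc. cit.) gives a cover $X_0\to X$ together with a projection $X_0\to Y_0$; lifting $X_0\to Y_0$ over the already-fixed $\mathbb{Y}_0$ yields $\mathbb{X}_0\to\mathbb{Y}_0$. For the inductive step, given the $(m-1)$-skeleton of $\mathbb{X}_\bullet$ mapping compatibly to that of $\mathbb{Y}_\bullet$, I form over $B^+_{\dR}/\xi^2$ the $m$-th matching objects $M_m^X$ and $M_m^Y$ of the respective coskeleta (\cite[09VI]{stacks-project}, \cite{conrad2003cohomological}), which carry a canonical map $M_m^X\to M_m^Y$ from the inductive data, and I apply the construction of loc. cit. to produce a liftable smooth proper $\eh$-cover
$$\mathbb{X}_m\longrightarrow M_m^X\times_{M_m^Y}\mathbb{Y}_m.$$
Because $M_m^X\times_{M_m^Y}\mathbb{Y}_m\to M_m^X$ is the base change of the cover $\mathbb{Y}_m\to M_m^Y$, the composite $\mathbb{X}_m\to M_m^X$ reduces to an $\eh$-cover, so $\mathbb{X}_\bullet$ remains a hypercover, while the projection $\mathbb{X}_m\to\mathbb{Y}_m$ extends the simplicial map. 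The matching-object formalism encodes precisely the face and degeneracy compatibilities, so the resulting square of simplicial objects commutes by construction. Finally, I would obtain the augmentation together with $\mathbb{X}\to\mathbb{Y}$ by descending the composite $\mathbb{X}_\bullet\to\mathbb{Y}_\bullet\to\mathbb{Y}$ along $\mathbb{X}_\bullet\to\mathbb{X}$, using that $\mathbb{X}$ is the flat tft lift of $X$ determined by $\mathbb{X}_\bullet$ as in loc. cit. and that this assignment is functorial in the hypercover.

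The main obstacle is the simultaneous liftability over $B^+_{\dR}/\xi^2$: at each stage one must choose the smooth proper $\eh$-cover of the fibre product so that it, and the structure morphism $\mathbb{X}_m\to\mathbb{Y}_m$, lift flatly over $B^+_{\dR}/\xi^2$ compatibly with the already-constructed lift of $\mathbb{Y}_\bullet$ and of the lower skeleton of $\mathbb{X}_\bullet$. This is controlled exactly as in \cite[Proposition 7.4.4]{guo2019hodge}: the terms are smooth, so lifts of objects and of morphisms exist by deformation theory along the square-zero extension $B^+_{\dR}/\xi^2\to C$; flatness over $B^+_{\dR}/\xi^2$ is automatic since $\xi$ is a nonzerodivisor and the reductions are the given smooth $C$-fibres; and the freedom to refine the cover further lets one align the lifts at each inductive step. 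The only genuinely new input beyond loc. cit. is that all covers are now taken over the fibre products $M_m^X\times_{M_m^Y}\mathbb{Y}_m$ rather than over $M_m^X$, which leaves each of these deformation-theoretic and descent arguments unaffected.
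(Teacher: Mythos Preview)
Your inductive construction of the hypercover $X_\bullet\to Y_\bullet$ over $C$ (covering the fibre products $(\mathrm{cosk}_m\mathrm{sk}_m X_\bullet)_{m+1}\times_{(\mathrm{cosk}_m\mathrm{sk}_m Y_\bullet)_{m+1}}Y_{m+1}$ at each stage) is correct and is exactly what the paper does at that level. The gap is in how you obtain the $B^+_{\dR}/\xi^2$-lifts.

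You write that ``the terms are smooth, so lifts of objects and of morphisms exist by deformation theory along the square-zero extension $B^+_{\dR}/\xi^2\to C$''. This is not how \cite[Proposition 7.4.4]{guo2019hodge} works, and it cannot work as stated. The ring $B^+_{\dR}/\xi^2$ is not a $C$-algebra (there is no section of $\theta$), so one is not in the setting of deforming a smooth $C$-scheme along a $C$-linear square-zero extension; and even if one formulated the problem correctly, smoothness of $X_m$ gives no reason for the obstruction class (in an $H^2$) to vanish, nor for a fixed lift of a morphism to the already-chosen $\mathbb{Y}_m$ to exist. Your inductive scheme also needs $\mathbb{X}\to\mathbb{Y}$ and the augmentation at the outset, not by a posteriori ``descent'', and you never explain where these come from.

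The paper's argument is genuinely different at this point: one first constructs a compatible diagram of proper flat \emph{formal models} $\mathfrak{X}_\bullet\to\mathfrak{X}\to\mathfrak{Y}\leftarrow\mathfrak{Y}_\bullet$ over $\mathcal{O}_C$ (this is the step where your inductive hypercover construction is used, but integrally), and then lifts the entire finite diagram to $B^+_{\dR}/\xi^2$ in one stroke via the spreading-out argument of \cite[Corollary 13.16]{BhattMorrowScholzeIntegralPadicHodge}: deform the special fibres over a versal base $S$ smooth over a discretely valued field, and use smoothness of $S$ to extend the $C$-point to a $B^+_{\dR}/\xi^2$-point. This is why formal models and the deformation functor of the special fibre appear, and it is this mechanism---not deformation theory of the generic fibre along $B^+_{\dR}/\xi^2\to C$---that produces the compatible lifts.
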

\begin{proof}
    In order to give the proof, we briefly recall the spreading out argument in \cite[Corollary 13.16]{BhattMorrowScholzeIntegralPadicHodge}. To start with, we denote $K$ as $W[1/p]$ where $W$ is the Cohen ring of the residue field $k$ of $C$ and fix an inclusion $K\to C$. For a finite diagram of proper rigid-analytic varieties $\{T_i\}_{i\in I}$, suppose we can spread out $\{T_i\}_{i\in I}$ to a finite diagram $\{\tilde{T}_i \}$ proper and flat over a smooth rigid-analytic variety $S/K$ such that $\{T_i\}_{i\in I}$ arises as the fibre at some $C$-point of $S$. Shrink $S$ finitely many times if necessary, one may assume further that each $\tilde{T}_i $ is smooth over $S$. Since $S$ is smooth, the nilpotent thickening $ \Spa C \to \Spa B^+_\dR/\xi^2$ over $K$ can be lifted to $S$. Thus, one can pullback the diagram $\{\tilde{T}_i \}$ along the lift $\Spa B^+_\dR/\xi^2 \to S$. To find such an $S$, we need to find a finite diagram of proper flat formal models $\{ \mathfrak{T}_i\}_{i\in I}$ over $\mathcal{O}_C$ such that $\{T_i\}_{i\in I}$ arises as the rigid-generic fibre of them. Given such a diagram of formal models, we then consider the deformation functor of the special fibres:
    $$\mathrm{Def}_{\{ \mathfrak{T}_{k,i}\}_{i\in I}}:\mathrm{Art} \to \Set$$
    where $\mathrm{Art}$ is the category of artinian $W$-algebras with residue field $k$. Standard argument, c.f \cite[0E3S]{stacks-project}, then shows that the tangent space of $\mathrm{Def}_{\{ \mathfrak{T}_{k,i}\}_{i\in I}}$ is a finite dimensional $k$-vector space. Thus \cite[Lemma 06IW]{stacks-project} shows such deformation functor admits a versal object, namely there exists a complete noetherien local $W$-algebra $R$ and a diagram of proper flat formal schemes $\{ \mathfrak{T}_{R,i}\}_{i\in I}$ over $R$ deforming $\{ \mathfrak{T}_{k,i}\}_{i\in I}$ such that the classifying map is formally smooth. Consider the ind-category of $\mathrm{Art}$ as in the proof of \cite[Proposition 13.15]{BhattMorrowScholzeIntegralPadicHodge} produces a map $R\to \mathcal{O}_C$. Take the rigid generic fibre of $\{ \mathfrak{T}_{R,i}\}_{i\in I}$ over $\Spf (R)$ gives a diagram of rigid-analytic varieties over $S\coloneq \Spa(R[1/p],R)$. Replace $S$ with a locally closed subspace if necessary, we get the desired diagram.

    Now we explain how to find a diagram of proper flat simplicial formal schemes
    \[\begin{tikzcd}
			\mathfrak{X}_\bullet \arrow[r]  \arrow[d] & \mathfrak{Y}_\bullet \arrow[d]  \\
			\mathfrak{X} \arrow[r]             & \mathfrak{Y}             
		\end{tikzcd}\]
    over $\mathcal{O}_C$ such that its rigid generic fibre forms smooth proper $\eh$-hypercovers of $X$ and $Y$ respectively. We first take an $n$-truncated smooth proper $\eh$-hypercover $Y_\bullet\to Y$ which admits a proper flat formal model $\mathfrak{Y}_\bullet\to \mathfrak{Y}$ by \cite[Claim 7.4.5]{guo2019hodge}. Base change along some proper flat formal model $\mathfrak{{X}}\to \mathfrak{Y}$ of $X$, we get a augmented simplicial formal scheme $\mathfrak{X}\times_{\mathfrak{Y}}\mathfrak{Y_\bullet} \to \mathfrak{X}$. Pick a smooth proper $\eh$-cover $X_0$ of $X\times_Y Y_0$ such that it admits a formal model $\mathfrak{X}_0$ that maps to $\mathfrak{X}\times_\mathfrak{Y}\mathfrak{Y}_0$. Assume by induction that $\mathfrak{X}_\bullet$ is already constructed up to degree $m$. Consider the canonical map 
    $$\gamma:(\mathrm{cosk}_m\mathrm{sk}_m X_\bullet)_{m+1} \to (\mathrm{cosk}_m\mathrm{sk}_m Y_\bullet)_{m+1}.$$ 
    By definition the natural map $Y_{m+1 }\to (\mathrm{cosk}_m\mathrm{sk}_m Y_\bullet)_{m+1}$ is a smooth proper $\eh$-cover. Pull back $Y_{m+1 }$ along $\gamma$ gives a proper $\eh$-cover $X'_{m+1}$ of $(\mathrm{cosk}_m\mathrm{sk}_m X_\bullet)_{m+1}$. Take a smooth proper $\eh$-cover $N$ of $X'_{m+1}$ along with a proper flat formal model $\mathfrak{N}$ compatible with the resolution. Finally, we use \cite[Theorem 4.12]{conrad2003cohomological} to glue such $N$ and $\mathfrak{N}$ to get the rigid-analytic variety and formal model in degree $m+1$ respectively. 
\end{proof}



We now prove the main theorem.

\begin{proof}[Proof of \cref{thm. p-adic simpson for proper rigid-analytic variety}]
    The existence of a 1-truncated smooth proper $\eh$-hypercover $X_\bullet$ of $X$ which admits a compatible lift $\mathbb{X}_\bullet \to \mathbb{X}$ to $B^+_\dR/\xi^2$ is guaranteed by \cref{prop. proper rigid-analytic variety has liftable hypercovers}. Since both $\Vect(X_\proet)$ and $\Higgs(X_\eh)$ are 1-truncated $\eh$-sheaf, they are (homotopy) equalizers of $\mathrm{sk}_1X_\bullet$, namely
    $$\Vect(X_\proet)=\mathrm{eq}(\Vect(X_{0,\proet})\substack{\longrightarrow\\[-1em] \longrightarrow} \Vect(X_{1,\proet})),\ \Higgs(X_\eh)=\mathrm{eq}(\Higgs(X_{0,\eh})\substack{\longrightarrow\\[-1em] \longrightarrow} \Higgs(X_{1,\eh})).$$
    By \cref{prop.simplicial et Higgs and eh Higgs}, we have $\Higgs(\mathrm{sk_1}X_{\bullet,\eh})\simeq \Higgs(\mathrm{sk_1}X_{\bullet,\et})$. By \cref{thm.simplicial p-adic simpson correspondence}, there is an equivalence of symmetric monoidal categories 
    $$\Vect(\mathrm{sk}_1X_{\bullet,\proet}) \to \Higgs(\mathrm{sk}_1 X_{\bullet,\et}).$$
    Taking the limit on both sides gives the desired equivalence. The functoriality follows immediately from the one of simplicial smooth rigid-analytic varieties.
\end{proof}

Finally we compare the cohomology of a pro-\'etale vector bundle $\mathcal{E}$ with the cohomology of its corresponding Higgs field on a rigid-analytic variety $X$.
\begin{theorem}\label{thm. cohomology comparison}
     Let $X$ be a proper rigid-analytic variety of dimension $n$. Take a $3n+2$-truncated smooth proper $\eh$-hypercover of $X$ with a lift $\mathbb{X}_\bullet$ to $B^+_\dR/\xi^2$. Given a pro-\'etale vector bundle $\mathcal{E}$ on $X$, by \cref{thm. p-adic simpson for proper rigid-analytic variety} it corresponds to a Higgs bundle $(E,\theta)=S_{\mathbb{X}_\bullet,\exp}(\mathcal{E})$ on $X_\eh$, then 
    \begin{equation*}
        R\Gamma_\proet(X, \mathcal{E})\simeq R\Gamma_\eh(X, (E,\theta)).
    \end{equation*}
    In particular, $R\Gamma_\eh(X, (E,\theta))$ is concentrated on cohomological degree $[0,2n]$.
\end{theorem}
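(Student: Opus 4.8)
The plan is to compute both sides by descending along the given smooth proper $h$-hypercover and then invoke the degreewise comparison of \cref{prop.simplicial cohomological comparison}. Write $N=3n+2$ and let $X_\bullet \to X$ be the chosen $N$-truncated smooth proper $h$-hypercover with lift $\mathbb{X}_\bullet$. Restricting $\mathcal{E}$ to the hypercover yields a pro-\'etale vector bundle $\mathcal{E}^\bullet$ on $X_{\bullet,\proet}$, and under the simplicial correspondence of \cref{thm.simplicial p-adic simpson correspondence} it corresponds to the \'etale Higgs bundle $(E^\bullet,\theta)$ with $S_{\mathbb{X}_\bullet,\exp}(\mathcal{E})=(E,\theta)$ obtained by $h$-descent. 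Applying \cref{prop.simplicial cohomological comparison} to the truncated smooth simplicial object gives a natural isomorphism
$$\mathrm{Tot}_{\le N}\,R\Gamma_\proet(X_\bullet,\mathcal{E}^\bullet)\simeq \mathrm{Tot}_{\le N}\,R\Gamma_\et(X_\bullet,(E^\bullet,\theta)),$$
where the truncated totalizations are the abutments of the spectral sequence of \cref{prop.simplicial space spectral sequence}; since each $X_m$ is smooth, the remark following \cref{def.definition of dolbeault cohomology} together with \cref{prop.simplicial et Higgs and eh Higgs} identifies the right-hand side with $\mathrm{Tot}_{\le N}\,R\Gamma_h(X_\bullet,(E^\bullet,\theta))$.

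Next I would show that these truncated totalizations recover the cohomology of $X$ itself. On the pro-\'etale side, $\mathcal{E}$ is a $v$-vector bundle whose cohomology satisfies $h$-hyperdescent by \cref{lem.proetale VB has h-descent} and \cref{lem. comparison to v-topology}, so $R\Gamma_\proet(X,\mathcal{E})$ is computed by the full totalization over $\Delta$ of the coskeletal hypercover. On the Higgs side, $C_\Higgs(E,\theta)$ is a complex of $h$-sheaves, so $R\Gamma_h(X,(E,\theta))$ is likewise the full totalization. The crucial point is that the non-smooth coskeleton terms in simplicial degrees $p>N$ only affect cohomology in total degrees $>N$: since the complexes $R\Gamma_\proet(X_p,\mathcal{E}^p)$ and $R\Gamma_h(X_p,(E^p,\theta))$ sit in nonnegative cohomological degrees, the fiber of $\mathrm{Tot}\to\mathrm{Tot}_{\le N}$ is concentrated in degrees $\ge N+1$, whence $H^i$ of the truncated totalization agrees with $H^i$ of $X$ for all $i\le N$ on both sides.

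Finally I would assemble the bounds. By \cref{prop.cohomological dimension bounds of proet VB} the complex $R\Gamma_\proet(X,\mathcal{E})$ is concentrated in $[0,2n]$, and by \cref{cor. rough dolbeault cohomological dimension bounds} the complex $R\Gamma_h(X,(E,\theta))$ is a priori concentrated in $[0,3n]$; both ranges lie strictly below $N=3n+2$. Hence the equality $H^i(\mathrm{Tot}_{\le N})=H^i(X)$ for $i\le N$ captures all of the cohomology on both sides, and combining with the degreewise comparison of the first paragraph yields
$$R\Gamma_\proet(X,\mathcal{E})\simeq \mathrm{Tot}_{\le N}\,R\Gamma_\proet(X_\bullet,\mathcal{E}^\bullet)\simeq \mathrm{Tot}_{\le N}\,R\Gamma_h(X_\bullet,(E^\bullet,\theta))\simeq R\Gamma_h(X,(E,\theta)).$$
The ``in particular'' statement is then immediate, since $R\Gamma_h(X,(E,\theta))$ has been identified with $R\Gamma_\proet(X,\mathcal{E})$, which lives in $[0,2n]$. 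I expect the main obstacle to be precisely the bookkeeping in the second paragraph: making rigorous that an $N$-truncated \emph{smooth} hypercover computes cohomology throughout the rough Higgs range $[0,3n]$, by controlling the discrepancy between $\mathrm{Tot}$ and $\mathrm{Tot}_{\le N}$ and checking that the degreewise Simpson comparison, which is only available on the smooth truncated levels, is unaffected by the non-smooth higher coskeleton terms. The slack $+2$ above $3n$ is what guarantees that the resulting map is an isomorphism of complexes rather than merely an isomorphism on cohomology in a partial range.
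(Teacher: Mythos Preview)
Your proposal is correct and follows essentially the same strategy as the paper: both arguments apply \cref{prop.simplicial cohomological comparison} (together with \cref{prop.simplicial et Higgs and eh Higgs}) on the $(3n+2)$-truncated smooth hypercover, use the coconnectivity of the terms to show that passing from the full totalization to the truncated one only affects cohomology above the a priori ranges $[0,2n]$ and $[0,3n]$ supplied by \cref{prop.cohomological dimension bounds of proet VB} and \cref{cor. rough dolbeault cohomological dimension bounds}, and then conclude. The paper phrases the middle step as ``the cone lives in degree $\ge 3n+1$'' followed by applying $\tau^{\le 3n}$, whereas you phrase it as a comparison of $\mathrm{Tot}$ with $\mathrm{Tot}_{\le N}$, but these are the same argument.
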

\begin{proof}
     By \cref{prop. proper rigid-analytic variety has liftable hypercovers}, there is a $3n+2$-truncated smooth proper $\eh$-hypercover $X_\bullet$ of $X$ which admits a compatible lift $\mathbb{X}_\bullet \to \mathbb{X}$ to $B^+_\dR/\xi^2$. Denote by $\rho: X_{\bullet}\to X$ the augmentation map. Then the cone of the natural map 
    \begin{equation*}
        R\Gamma_\proet(X, \mathcal{E}) \to R\Gamma_\proet(\mathrm{sk}_{3n+2}X_\bullet,\mathcal{E}^\bullet)
    \end{equation*}
    lives in cohomological degree $\geq 3n+1$ where $\mathcal{E}^\bullet=\rho^*\mathcal{E}$. Similarly, the cone of the natural map 
    \begin{equation*}
        R\Gamma_\eh(X, (E,\theta)) \to R\Gamma_\eh(\mathrm{sk}_{3n+2}X_{\bullet}, (E^{\bullet},\theta))
    \end{equation*}
    lives in cohomological degree $\geq 3n+1$. On the other hand, by \cref{prop.simplicial cohomological comparison} and \cref{prop.simplicial et Higgs and eh Higgs}, we have that 
    \begin{equation*}
        R\Gamma_\proet(\mathrm{sk}_{3n+2}X_{\bullet}, \mathcal{E}^{\bullet})\xrightarrow{\simeq} R\Gamma_\et(\mathrm{sk}_{3n+2}X_{\bullet}, (E^{\bullet},\theta))=R\Gamma_\eh(\mathrm{sk}_{3n+2}X_{\bullet}, (E^{\bullet},\theta)).
    \end{equation*}
    Combining all of the above arguments together, we deduce that 
    \begin{equation*}
        \tau^{\leq 3n}R\Gamma_\proet(X, \mathcal{E}) = \tau^{\leq 3n}R\Gamma_\proet(\mathrm{sk}_{3n+2}X_{\bullet}, \mathcal{E}^{\bullet}) \xrightarrow{\simeq} \tau^{\leq 3n}R\Gamma_\eh(\mathrm{sk}_{3n+2}X_{\bullet}, (E^{\bullet},\theta)) =
        \tau^{\leq 3n}R\Gamma_\eh(X, (E,\theta)).
    \end{equation*}
    By \cref{cor. rough dolbeault cohomological dimension bounds}, $R\Gamma_\eh(X, (E,\theta))$ is concentrated in cohomological degree $[0,3n]$. On the other hand, by \cref{prop.cohomological dimension bounds of proet VB}, $R\Gamma_\proet(X, \mathcal{E})$ is concentrated on cohomological degree $[0,2n]$, hence
    \begin{equation*}
        R\Gamma_\proet(X, \mathcal{E})= \tau^{\leq 3n}R\Gamma_\proet(X, \mathcal{E})\xrightarrow{\simeq} \tau^{\leq 3n} R\Gamma_\eh(X, (E,\theta)) = R\Gamma_\eh(X, (E,\theta))
    \end{equation*}
    as desired.
\end{proof}

Finally, we record a version of the geometric Sen operator in the singular cases that might be interested to some readers. By slightly abusing notations, we will identify $\Omega^1_X$ and $\Omega^1_\eh$ with their pullbacks to the $v$-site.

\begin{proposition}
    Let $X$ be a rigid-analytic variety. Then for any $v$-vector bundle $\mathcal{E}$ on $X_v^\diamond$, there exists a unique Higgs field
    $$\eta_\mathcal{E}: \mathcal{E} \to \mathcal{E}\otimes \Omega^1_\eh(-1)$$
    on $X_v^\diamond$ such that the following conditions hold:
    \begin{enumerate}
        \item The association $\mathcal{E}\to \eta_\mathcal{E}$ is functorial in $X$ and $\mathcal{E}$;
        \item If $X$ is smooth, then $\eta_\mathcal{E}$ is the composition of $\theta_\mathcal{E}:\mathcal{E}\to \mathcal{E}\otimes \Omega^1_X(-1)$ and $\Id\otimes \mathrm{can}: \mathcal{E}\otimes \Omega^1_X(-1) \to \mathcal{E}\otimes \Omega^1_\eh(-1)$ where $\theta_\mathcal{E}$ is the geometric Sen operator for smooth rigid-analytic varieties;
        \item We have $\eta_\mathcal{E}=0$ if and only if $\mathcal{E}$ comes from a vector bundle on $X_\eh$.
    \end{enumerate}
\end{proposition}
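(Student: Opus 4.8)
The plan is to produce $\eta_\mathcal{E}$ by descent from the simplicial geometric Sen operator of \cref{thm. canonical proetale Higgs field on simplicial}. First I would choose a smooth $h$-hypercover $\rho\colon X_\bullet\to X$; such a hypercover exists because the $h$-topology is locally smooth (one may take a smooth $\eh$-hypercover in the sense of \cite{guo2019hodge}, which is automatically an $h$-hypercover, and hence a $v$-hypercover of diamonds by \cref{lem. comparison to v-topology}). Setting $\mathcal{E}^\bullet=\rho^*\mathcal{E}$, a $v$-vector bundle on $X_{\bullet,v}^\diamond$, \cref{thm. canonical proetale Higgs field on simplicial} endows it with a canonical simplicial Higgs field $\eta_{\mathcal{E}^\bullet}\colon\mathcal{E}^\bullet\to\mathcal{E}^\bullet\otimes\Omega^1_{X_\bullet}(-1)$.

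The crux is to land in the correct target. Since $\Omega^1_h$ is a genuine $h$-sheaf on the big site, its pullback $\rho^*\Omega^1_h$ is \emph{cartesian}, with $n$-th term the $v$-pullback of $\Omega^1_h|_{X_n}$ and all transition maps invertible; this is in contrast with the non-cartesian simplicial sheaf $\Omega^1_{X_\bullet}$ of \cref{eg.simplicial cotangent sheaf is not VB or dualizable}. On each smooth level the canonical map $\mathrm{can}\colon\Omega^1_{X_n}\to\Omega^1_h|_{X_n}$ of \cref{warning. big differential and relative differential} assembles, by naturality of $\mathrm{can}$ under the face and degeneracy maps, into a morphism of simplicial sheaves $\Omega^1_{X_\bullet}\to\rho^*\Omega^1_h$. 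Composing, I obtain $\widetilde\eta\colon\mathcal{E}^\bullet\to\mathcal{E}^\bullet\otimes\rho^*\Omega^1_h(-1)$. Because $\rho^*\Omega^1_h$ is cartesian and the ``cartesian-up-to-differentials'' compatibility of $\eta_{\mathcal{E}^\bullet}$ (which holds here by the functoriality in \cref{thm. canonical proetale Higgs field on simplicial}) becomes a genuine cartesian compatibility once composed with $\mathrm{can}$, the datum $\widetilde\eta$ is a descent datum for a morphism valued in the $h$-sheaf $\Omega^1_h(-1)$.

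To descend I would use that $\underline{\mathrm{End}}(\mathcal{E})$ is a $v$-vector bundle, hence an $h$-sheaf by \cref{lem.proetale VB has h-descent}, and that $\Omega^1_h(-1)$ is an $h$-sheaf by construction, so $\mathcal{H}\coloneqq\underline{\mathrm{End}}(\mathcal{E})\otimes\Omega^1_h(-1)$ satisfies $v$-descent along $X_\bullet\to X$; thus $\Gamma(X_v^\diamond,\mathcal{H})=\mathrm{eq}\big(\Gamma(X_{0,v}^\diamond,\mathcal{H})\rightrightarrows\Gamma(X_{1,v}^\diamond,\mathcal{H})\big)$, and the degree-zero component of $\widetilde\eta$ lies in this equalizer, descending to a unique $\eta_\mathcal{E}\colon\mathcal{E}\to\mathcal{E}\otimes\Omega^1_h(-1)$. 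Integrability $\eta_\mathcal{E}\wedge\eta_\mathcal{E}=0$ holds because it does on $X_0$ and restriction to $X_0$ is injective on sections of $\underline{\mathrm{End}}(\mathcal{E})\otimes\Omega^2_h(-2)$. Well-definedness (independence of the hypercover) and property (1) follow by passing to a common refinement of two hypercovers and invoking the functoriality of \cref{thm. canonical proetale Higgs field on simplicial}; uniqueness then follows since any $\eta_\mathcal{E}$ satisfying (1) and (2) is, after pulling back along a smooth hypercover, forced to equal $\mathrm{can}\circ\eta_{\mathcal{E}^\bullet}$, and pullback along the hypercover is faithful on $\mathcal{H}$. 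Property (2) is the case of the constant hypercover $X_\bullet=X$ for smooth $X$, where \cref{thm. canonical proetale Higgs field on simplicial} identifies $\eta_{\mathcal{E}^\bullet}$ with Heuer's $\theta_\mathcal{E}$, so composing with $\mathrm{can}$ gives exactly the asserted formula. Property (3) follows levelwise from \cref{thm. canonical proetale Higgs field on simplicial}: $\eta_\mathcal{E}=0$ iff each $\eta_{\mathcal{E}^n}=0$ iff each $\mathcal{E}^n$ descends to $X_{n,\et}$, which by the $h$-descent equivalence $\Vect(X_h)\simeq\lim_n\Vect(X_{n,h})\simeq\lim_n\Vect(X_{n,\et})$ of \cref{thm.descent of vector bundles} is equivalent to $\mathcal{E}$ descending to a vector bundle on $X_h$.

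The main obstacle I anticipate lies in the second paragraph: correctly pinning down the target. One must verify that $\rho^*\Omega^1_h$ is the cartesian pullback of the $h$-sheaf $\Omega^1_h$ (rather than the non-cartesian $\Omega^1_{X_\bullet}$), that $\mathrm{can}$ converts the compatibility of the simplicial Sen operator into an honest descent datum, and --- since $\Omega^1_h$ is neither coherent nor locally free --- that the descent must be carried out at the level of global sections of $\mathcal{H}$ rather than via vector-bundle descent. Everything else is formal once this identification is in place.
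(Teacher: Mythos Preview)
Your proposal is correct and follows essentially the same route as the paper: choose a smooth $h$-hypercover, apply the simplicial geometric Sen operator of \cref{thm. canonical proetale Higgs field on simplicial}, descend to $X$, and verify independence of the hypercover by passing to a common refinement. The only cosmetic difference is in how the descent is packaged: the paper applies $\rho_*$ and the projection formula, then identifies $\rho_*\Omega^1_{X_\bullet}(-1)\simeq\Omega^1_h(-1)$ via \cref{prop.simplicial et Higgs and eh Higgs}, whereas you first compose with $\mathrm{can}\colon\Omega^1_{X_\bullet}\to\rho^*\Omega^1_h$ upstairs and then descend the resulting section of $\underline{\mathrm{End}}(\mathcal{E})\otimes\Omega^1_h(-1)$ through the equalizer; these are equivalent formulations of the same adjunction.

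One small point to be careful with in your argument for (3): you assert ``$\eta_\mathcal{E}=0$ iff each $\eta_{\mathcal{E}^n}=0$'', but what descends is $\mathrm{can}\circ\eta_{\mathcal{E}^n}$, not $\eta_{\mathcal{E}^n}$ itself, so you implicitly use that $\mathrm{can}$ is injective on the relevant sections. This holds because on smooth $X_n$ the map $\Omega^1_{X_n}\to\Omega^1_h$ is an isomorphism after restriction to the small site (\cref{cor. differential coincide on sm}), hence remains so after $v$-pullback and tensoring with the vector bundle $\underline{\mathrm{End}}(\mathcal{E}^n)$; it is worth making this explicit. The paper sidesteps this by working directly with $\rho_*\Omega^1_{X_\bullet}$ rather than composing with $\mathrm{can}$ first.
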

\begin{proof}
    It suffices to prove that such Higgs fields exist uniquely for quasicompact rigid-analytic varieties. To this end, let $X$ be a quasicompact rigid-analytic variety and $X_\bullet$ be a smooth $\eh$-hypercover of $X$ with augmented map $\rho:X_{\bullet,v}^\diamond \to X^\diamond_v$. By \cref{thm. canonical proetale Higgs field on simplicial}, there exists a unique Higgs field for $\rho^*\mathcal{E}$ given by 
    $$\eta_{X_\bullet}:\rho^*\mathcal{E} \to \rho^*\mathcal{E} \otimes \Omega^1_{X_\bullet}(-1) $$
    Applying $\rho_*$ and projection formula gives a Higgs field for $\mathcal{E}$ of the form 
    $$\rho_*\eta_{X_\bullet}:\mathcal{E} \to \mathcal{E}  \otimes \rho_* \Omega^1_{X_\bullet}(-1).$$
    Since $\Omega^1_{X_\bullet}(-1)$ can be identified with $\Omega^1_{X_\bullet,\eh}$ by \cref{prop.simplicial et Higgs and eh Higgs}, we have $\rho_* \Omega^1_{X_\bullet}(-1) \simeq \Omega^1_\eh(-1)$ on $X_v^\diamond$. This proves the existence of a Higgs field for $\mathcal{E}$. It remains to show that such a Higgs field is independent of the choice of $X_\bullet$. Suppose there is another smooth $\eh$-hypercover of $X$ given by $X'_\bullet$. By a similar argument as in \cref{cor. weak functoriality of the hypercovers}, we can find a smooth $\eh$-hypercover $X_\bullet''$ of $X$ which dominates both $X_\bullet$ and $X_\bullet'$, namely we have a commutative diagram
    \[
    \begin{tikzcd}
        X_\bullet''^\diamond \ar[r,"\alpha"] \ar[d,"\beta"] \ar[dr,"{\rho''}"] & X_\bullet'^\diamond \ar[d,"\rho'"]\\
        X_\bullet^\diamond \ar[r,"\rho"] & X^\diamond.
    \end{tikzcd}
    \]
    Since the Higgs field on $X^\diamond$, $X'^\diamond$ and $X''^\diamond$ are all canonical, we get $\eta_{X_\bullet}=\beta_*\eta_{X_\bullet''}$ and $\eta_{X_\bullet'}=\alpha_*\eta_{X_\bullet''}$. Hence we can identify both $\rho_*\eta_{X_\bullet}$ and $\rho'_*\eta_{X'_\bullet}$ with $\rho''_*\eta_{X_\bullet''}$ showing the desired uniqueness. The rest of the claim follows easily from the statement of \cref{thm. canonical proetale Higgs field on simplicial} and $\eh$-descent.
\end{proof}

\newpage
\bibliographystyle{amsalpha}
\bibliography{main,preprints}

\end{document}